\newtheorem{lem}{Lemma}[section]
\newtheorem{theorem}[lem]{Theorem}
\newtheorem{propo}[lem]{Proposition}
\newtheorem{defi}[lem]{Definition}
\newtheorem{coro}[lem]{Corollary}
\newtheorem{rema}[lem]{Remark}
\numberwithin{equation}{section}
\def\RR{{\mathbb R}}
\def\dfrac{\displaystyle\frac}
\def\R{\mathbb{R}}
\def\epsilon{\varepsilon}
\title[Hamiltonian estimates for fractional Laplacians]
{Nonlinear equations for fractional Laplacians I: Regularity, maximum principles,
and Hamiltonian estimates}
\author{Xavier Cabr\'e}
\author{Yannick Sire}
\thanks{The first author was supported by
grants MTM2008-06349-C03-01 (Spain) and 2009SGR-345 (Catalunya)}
\address{X. Cabr\'e: ICREA and Universitat Polit{\`e}cnica de Catalunya,
Departament de Matem{\`a}tica Aplicada I, Diagonal 647, 08028
Barcelona, Spain}
\email{xavier.cabre@upc.edu}
\address{Y. Sire: Universit\'e Paul C\'ezanne, LATP,
Facult\'e des Sciences et Techniques, Case cour A,
Avenue Escadrille Normandie-Niemen, F-13397 Marseille Cedex 20,
France, and CNRS, LATP, CMI, 39 rue F. Joliot-Curie, F-13453 Marseille Cedex
13, France}
\email{sire@cmi.univ-mrs.fr}
\begin{document}

\begin{abstract}
This is the first of two articles dealing with the equation 
$(-\Delta)^{s} v= f(v)$ in $\mathbb{R}^{n}$,
with $s\in (0,1)$, where $(-\Delta)^{s}$ stands for the fractional
Laplacian ---the infinitesimal generator of a L\'evy process. This 
equation can be realized as a local linear degenerate elliptic 
equation in $\mathbb{R}^{n+1}_+$ together with a nonlinear Neumann boundary 
condition on $\partial \mathbb{R}^{n+1}_+=\mathbb{R}^{n}$. 

In this first article, we establish necessary conditions on the nonlinearity $f$
to admit certain type of solutions, with special interest in bounded increasing solutions 
in all of $\mathbb{R}$. These necessary conditions (which will be proven in a follow-up paper to be also
sufficient for the existence of a bounded increasing solution) are derived
from an equality and an estimate involving a Hamiltonian
---in the spirit of a result of Modica for the Laplacian.
In addition, we study regularity issues, as well as maximum and Harnack principles 
associated to the equation.
\end{abstract}
\maketitle


\section{Introduction}

This paper is devoted to the study of the nonlinear problem 
\begin{equation}
\label{problem}
(-\Delta)^{s} v= f(v)\,\,\,\mbox{ in } \mathbb{R}^{n},
\end{equation}
where $s\in (0,1)$ and
\begin{equation}\label{fracpv}
(-\Delta)^{s} v(x)=C_{n,s}\ \textrm{P.V.} \int_{\mathbb{R}^{n}}
\frac{v(x)-v(\overline x)}{|x-\overline x|^{n+2s}}\,d\overline x. 
\end{equation}
Here $\textrm{P.V.}$ stands for the Cauchy principal value, and $C_{n,s}$ is
a positive constant depending only on $n$ and $s$ 
---whose value is given in Remark~\ref{constant} below. 
The above integral is well defined if, for instance, $v$ is bounded (which ensures
the integrability at infinity) and $v$ is $C^2_{\rm loc}(\RR^{n})$ (which ensures
the integrability at $\overline x=x$ in the principal value sense). 

As explained in more detail in section~3 below, up to an explicit multiplicative constant
(given in Remark~\ref{constant})
in front of the nonlinearity $f$, problem \eqref{problem} can be realized in a
local manner through the nonlinear boundary value problem 
\begin{equation}
\label{extAlpha}
\begin{cases}
\textrm{div\,} (y^{a}\,\nabla u)=0&\text{ in } \mathbb{R}^{n+1}_+\\ 
(1+a) \displaystyle{\frac{\partial u}{\partial{\nu}^{a}}}
=f(u) &\text{ on } \partial\mathbb{R}^{n+1}_+=\RR^n ,
\end{cases}
\end{equation}
where $n\geq 1$, $\mathbb{R}^{n+1}_+=\{(x,y) \in \mathbb{R}^n \times \mathbb{R} : y>0\}$
is the halfspace, $\partial\mathbb{R}^{n+1}_+=\{y=0\}$,
$u=u(x,y)$ is real valued, and 
$$
\frac{\partial u}{\partial \nu^a}=-\lim_{y\downarrow 0} y^{a} \partial_y u
$$
is the conormal exterior derivative of~$u$.
Points in $\mathbb{R}^{n}$ are denoted by 
$x=(x_1,\ldots ,x_{n})$. The parameter $a$ belongs to $(-1,1)$
and is related to the power $s$ of the fractional
Laplacian $(-\Delta)^s$ by the formula 
$$a=1-2s \in (-1,1)$$
---a relation that we assume throughout the paper.

Indeed, Caffarelli and Silvestre \cite{cafS} have proved the following formula 
relating the fractional
Laplacian $(-\Delta)^s$ to the Dirichlet to Neumann operator for \eqref{extAlpha}:
\begin{equation}\label{cttNeumann}
(-\Delta)^{s} \{u(\cdot , 0)\}=d_s\frac{\partial u}{\partial
  \nu^{a}} \qquad\text{in } \RR^n={\partial \mathbb{R}^{n+1}_+},
\end{equation}
where $d_s$ is a positive constant depending only on~$s$
(see Remark~\ref{constant} below
for  its value).

The aim of this paper ---and of the forthcoming one \cite{CS2}--- is to study 
two types of bounded solutions of~\eqref{problem}:
\begin{enumerate}
\item[(a)] Solutions $v=v(x)$ of~\eqref{problem} which
are monotone increasing, say from $-1$ to $1$, 
in one of the $x$-variables. These solutions are named {\em layer solutions} and 
constitute our main interest.
\item[(b)]  Radial solutions $v=v(|x|)$ of~\eqref{problem} tending, say, to $0$
as $|x| \rightarrow  \pm \infty$.
\end{enumerate}

In the second part \cite{CS2} of this work, we will be concerned with the existence, uniqueness,
symmetry and variational properties, as well as the asymptotic
behavior of layer solutions. These questions will be related, of course, 
to structural assumptions made on the nonlinearity~$f$.

In this first article, we establish necessary conditions on the nonlinearity $f$
to admit a layer solution in $\RR$ (i.e., in dimension one), and also to admit radial
solutions in $\RR^n$ with limit at infinity. In the case of
layer solutions, our necessary conditions \eqref{nec1} and \eqref{nec2} below will be proven 
in \cite{CS2} to be also
sufficient for the existence of a layer solution.
Our necessary conditions are derived
from a new equality and a new estimate involving the Hamiltonian for \eqref{extAlpha} 
---in the spirit of a celebrated inequality of Modica~\cite{M1} for the Laplacian.
In subsection~1.1 below we explain the Hamiltonian structure of problem~\eqref{extAlpha}.
Let us also recall that
Modica proved that every bounded solution of $\Delta v -G'(v)=0$ in the whole $\RR^n$
satisfies $(1/2)|\nabla v|^2\leq G(v)$ pointwise in all $\RR^n$, assuming only
that $G\geq 0$ in $\RR$. 
Here we prove an analogue of such an estimate in dimension one in
the case of nonlocal operators given by the fractional Laplacians. 
This is done via a careful study of the local boundary value
problem~\eqref{extAlpha}. 

In addition, in this first paper we also study regularity issues, as well as maximum, Liouville, 
and Harnack principles associated to the fractional Laplacian in $\RR^n$. These 
tools will be needed in this paper and in its follow-up.

Our work extends to all fractions $s\in (0,1)$ the results of one of the
authors and J.~Sol\`a-Morales~\cite{CSM} for the case $s=1/2$.
\cite{CSM} studies bounded (specially layer) solutions of 
\begin{equation*}
\begin{cases}
\Delta u=0&\text{ in } \mathbb{R}^{n+1}_+\\ 
\displaystyle{\frac{\partial u}{\partial{\nu}}}
=f(u) &\text{ on } \partial\mathbb{R}^{n+1}_+ ,
\end{cases}
\end{equation*}
which corresponds to the case $a=0$ in \eqref{extAlpha}. It is 
well known that the Dirichlet to Neumann operator associated to
the previous problem is precisely $(-\Delta)^{1/2}$. Therefore, layer
solutions of the previous equation are actually heteroclinic
connections (between $-1$ and $1$) of 
\begin{equation*}
(-\Delta)^{1/2} v= f(v)\,\,\,\mbox{ in } \mathbb{R}^{n},
\end{equation*}
where $v$ is the trace of $u$ on the boundary $\left \{ y= 0 \right \}.$
The goal of our papers is to generalize this study to any fractional
power $s\in (0,1)$ of the Laplacian. We will make a great use of the tools 
developed in~\cite{CSM}. However, some new difficulties arise due to the degeneracy 
of the operator in \eqref{extAlpha}.

The first equation in \eqref{extAlpha} is a linear degenerate elliptic equation with weight $y^{a}$. 
Since $a \in (-1,1)$, the weight
$y^{a}$ belongs to the Muckenhoupt class $A_2$; see~\cite{muck}. 
More precisely, a nonnegative 
function $w$ defined in $\RR^N$ is said to be $A_2$ if, for some constant $C$,
\begin{equation*}
\sup_B \, (\frac{1}{|B|} \int_B w)(\frac{1}{|B|} \int_B w^{-1}) \leq C
\end{equation*}
for all balls $B$ in $\mathbb{R}^N$. It is easy to verify that $|y|^a\in 
A_2(\RR^{n+1})$ for $a\in (-1,1)$. As we explain in section~3, this fact allows to use
the regularity results of Fabes, Jerison, Kenig, and Serapioni~\cite{FKS,FJK}
concerning divergence form equations with $A_ 2$ weights.
Another important property of the weight $y^{a}$ is that it
depends only on the extension variable $y$ and not on the tangential variables~$x$. 
Thus, \eqref{extAlpha} is invariant under translations in $x$ ---as it is equation
\eqref{problem}. In \cite{CS2}, this will allow us, for instance, to use for \eqref{extAlpha} the
sliding method of Berestycki and Nirenberg.

The study of elliptic equations involving fractional powers of the
Laplacian appears to be important in many
physical situations in which one has to consider long-range or
anomalous diffusions. From a probabilistic point of view, the
fractional Laplacian is the infinitesimal generator of a
L\'evy process ---see the book of Bertoin~\cite{B}, for instance.

A lot of interest is currently devoted to the study of nonlinear
equations involving fractional powers of the Laplacian.
This type of operator seems to have a
prevalent role in physical situations such as combustion (see
\cite{CRS}), dislocations in mechanical systems (see \cite{MI})
or in crystals (see \cite{GM,T1}). In addition, these operators arise in
modelling diffusion and transport in a highly heterogeneous medium;
they may be used in the description of the diffusion of a liquid in some
heterogeneous media, or as an effective diffusion in a limiting
advection-diffusion equation with a random velocity field;
see for instance \cite{MVV}.

\subsection{Formal Hamiltonian structure}
As in \cite{CSM}, the quantity appearing in our main results ---see for
instance \eqref{mmequal} below---
arises naturally when one looks at problem \eqref{extAlpha}
for $n=1$ as a formal Hamiltonian system in infinite dimensions.
Here the time variable is $\tau =x$, the position $q$ is the function
$u(x,\cdot)=u(\tau,\cdot )$ in the halfline $\{y\ge 0\}$, and the momentum is
$p=q'=u_x(\tau,\cdot )$. 

From the action ---that is, in PDE terminology 
the energy functional~\eqref{enerfunc} below, which we already have at hand--- 
we see that the Lagrangian is 
$L(q,p)=(1/2)\|p\|^2_{2,a}+W(q)$, with 
$$
W(q)=\frac{1}{2}\|\partial_y
q\|^2_{2,a}+\frac{1}{1+a}G(q(0))
$$
and $\|w\|^2_{2,a}=\int_0^{+\infty} y^a |w(y)|^2\,dy$.

The Legendre transform of $L$ with respect to $p$ gives
the Hamiltonian 
\begin{eqnarray*}
H(q,p)&=&\int_0^{+\infty}\dfrac{t^{a}}{2}\left\{
u_x^2(x,t)-u_y^2(x,t)\right\} dt-\frac{1}{1+a}G(u(0,x))\\
&=& \frac{1}{2}\|p\|^2_{2,a}-W(q).
\end{eqnarray*}
One can easily check that its associated Hamiltonian system
\begin{eqnarray*}
\begin{pmatrix}
q' \\p'
\end{pmatrix}
=
\begin{pmatrix}
p \\ W'(q)
\end{pmatrix}.
\end{eqnarray*}
is formally problem~\eqref{extAlpha}.

Thus, our equation admits a Hamiltonian structure in an infinite dimensional 
phase space. However, in this paper we do not address the question of setting it as
a true well posed semigroup. Note that a lot of challenging issues usually arise with
infinite-dimensional Hamiltonian systems ---see for instance~\cite{chernoff}.

\subsection{Outline of the article}
The paper is organized as follows.
Section~2 contains the statements of our main results. In section~3 we explain
the relation between problems \eqref{problem} and \eqref{extAlpha}, and we present
the Poisson kernel and a regularity result for \eqref{extAlpha}.
Section~4 contains results on the operator
$L_a$ appearing in \eqref{extAlpha}; we establish Schauder estimates, a Harnack inequality, a 
Hopf principle, 
maximum principles, and a Liouville theorem.
Section~5 is concerned with the proof of the Hamiltonian equality and estimates.
In section~6 we prove our results on layers as the fraction $s$ tends to $1$.
Finally, in section~7 we collect the
proofs of our main results, Theorems \ref{necLayers}, \ref{modthm}, \ref{necBounds} and \ref{modthmGS},
using the results established in previous sections.

\section{Main Results: Hamiltonian identity and necessary conditions
on the nonlinearity for existence}

Throughout the paper we assume that $s \in (0,1)$ and that
the nonlinearity satisfies
$$f\in C^{1,\gamma}(\RR) \quad \text {for some }
\gamma >\max(0,1-2s).
$$ 
We will denote $G$ the
associated potential, i.e., 
\begin{equation*}
G'=-f
\end{equation*}
---which is defined up to an additive constant. We recall that
the problem under study is \eqref{extAlpha}, i.e., 
\begin{equation*}
\begin{cases}
\textrm{div\,} (y^{a}\,\nabla u)=0&\text{ in } \mathbb{R}^{n+1}_+\\ 
(1+a) \displaystyle{\frac{\partial u}{\partial{\nu}^{a}}}
=f(u) &\text{ on } \partial\mathbb{R}^{n+1}_+ ,
\end{cases}
\end{equation*} 
with $a=1-2s. $ In the sequel we will denote 
$$
L_{a}w=\textrm{div\,} (y^{a}\,\nabla w).
$$ 
We use the notation
\begin{align*}
& B_R^+=\{ (x,y)\in\R^{n+1} : y>0, |(x,y)|<R\}, \\
& \Gamma_R^0=\{ (x,0)\in\partial\R^{n+1}_+ : |x|<R\}, \text{ and}\\
& \Gamma_R^+=\{ (x,y)\in\R^{n+1} : y\ge 0, |(x,y)|=R\}. 
\end{align*}

It is easy to see that  \eqref{extAlpha} has a variational
structure, corresponding to the energy functional
\begin{equation}\label{enerfunc}
E_{B_R^+}(u)=
\int_{B_R^+}\dfrac{1}{2}y^{a} |\nabla u|^2\, dx dy
+\int_{\Gamma^0_R} \frac{1}{1+a} G(u)\, dx. 
\end{equation}

We are concerned with the following types of solutions. The first class (layer solutions)
consists of solutions which are increasing and have limits at infinity 
in one Euclidean variable in the space
$\RR^n$ of $x$-variables. In the following definition, and for future
convenience, after a rotation we may assume that such variable is the $x_1$-variable.

\begin{defi}
{\rm 
\label{defsolns}
\noindent
We say that $v$ is a {\it layer solution} of \eqref{problem} if
$v$ is a solution of \eqref{problem} satisfying 
$$
v_{x_1}>0  \quad \mbox{in } \R^{n}\quad\text{ and}
$$
\begin{equation}\label{limlayer}
\lim_{x_1 \rightarrow \pm \infty} v(x)=\pm 1 \quad \text{for every } 
(x_2,\ldots,x_n)\in\RR^{n-1}.
\end{equation}

We say that $u$ is a {\it layer solution} of \eqref{extAlpha}
if it is a solution of \eqref{extAlpha},
\begin{equation}
\label{increasing}
u_{x_1}>0\quad \mbox{on } \partial\R^{n+1}_+, \quad\text{ and}
\end{equation}
\begin{equation}
\label{limits}
\lim_{x_1\to\pm\infty}u(x,0)=\pm 1 \quad\mbox{for every }
(x_2,\ldots ,x_{n})\in \R^{n-1}.
\end{equation}
}
\end{defi}

It is important to emphasize that, for $n\geq 2$, the limits in \eqref{limlayer} and \eqref{limits}
are taken for $(x_2,\ldots ,x_{n})$ fixed, and are not assumed to be
uniform in $(x_2,\ldots ,x_{n})\in\R^{n-1}$.

We will also study solutions $v$ of \eqref{problem} 
which are radially symmetric (not necessarily decreasing) and such that
\begin{equation}
\label{limits2}
\lim_{|x|\to\infty}v(|x|)=0.
\end{equation}

We can now state our main results.
The next theorem provides a necessary condition ---\eqref{nec1} and \eqref{nec2}--- 
on the nonlinearity $f$ to admit a layer solution in $\R$.
In our subsequent paper~\cite{CS2}, this necessary condition will be proven to be also
sufficient for the existence of a layer solution.
It is interesting to point out that conditions \eqref{nec1} and \eqref{nec2} are
independent of the fraction $s\in (0,1)$, and that they are also the necessary
and sufficient conditions for the existence of a layer solution to
the local equation $-v''=f(v)$ in all of $\RR$. 

The theorem also states that families of layer solutions indexed by $s \in (0,1)$ converge as $s$ 
goes to $1$ to a layer solution of the equation $-v''=f(v)$ in $\RR$. 

\begin{theorem}\label{necLayers}
{\rm (i)} Let $s \in (0,1)$ and $f$ any $C^{1,\gamma}(\RR)$ function, 
for some $\gamma >\max(0,1-2s)$. Assume that there exists a layer solution $v$ of  
\begin{equation}\label{eqcc}
(-\partial_{xx})^s v= f(v)\,\,\,\,\mbox{in }\RR ,
\end{equation}
that is, $v$ is a solution of \eqref{eqcc} satisfying 
$$v'>0 \quad \text{in }\RR \qquad\text{and}\qquad 
\lim_{x \rightarrow \pm \infty} v(x)=\pm 1. $$
Then, we have  
\begin{equation}\label{nec1}
G'(1)=G'(-1)=0 
\end{equation}
and 
\begin{equation}\label{nec2}
G>G(1)=G(-1)\,\,\,\,\mbox{in }(-1,1).
\end{equation}

{\rm (ii)} Let $f$ be any $C^{1,\gamma}(\RR)$ function with $\gamma \in (0,1)$.  
Assume that $\left \{ v_s \right \}$, with $s=s_k \in(0,1)$ and $s_k \uparrow 1$, 
is a sequence of layer solutions of 
$$(-\partial_{xx})^s v_s =f(v_s)\,\,\,\mbox{in } \RR,$$
such that $v_s(0)=0.$ Then, there exits a function $\overline v$ such that 
$$\lim_{s=s_k \uparrow 1} v_s=\overline v$$
in the uniform $C^2$ convergence on every compact set of $\RR$. 
Furthermore, the function $\overline v$ is the layer solution of
$$ -\overline v ''=f(\overline v)\,\,\,\mbox{in } \RR$$
with $\overline v(0)=0.$
\end{theorem}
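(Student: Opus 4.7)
The plan is to combine the Hamiltonian identity (Theorem~\ref{modthm}) and the Hamiltonian estimate (Theorem~\ref{modthmGS}), applied to the $L_a$-harmonic extension $u(x,y)$ of $v$ in the upper halfplane. First, condition \eqref{nec1} (i.e.\ $f(\pm 1)=0$) follows by considering the translates $v(\cdot-R)$ as $R\to\pm\infty$: by the uniform $C^{2,\alpha}$-regularity of Section~3 and $|v|\le 1$, these converge (along a subsequence) to a constant, and any constant solving $(-\partial_{xx})^s c = f(c)$ satisfies $f(c)=0$; identifying the limit as $\pm 1$ gives the claim. Second, the Hamiltonian identity asserts that
$$
\mathcal H(x):= \int_0^{+\infty}\tfrac{y^a}{2}\bigl(u_x^2-u_y^2\bigr)\,dy - \tfrac{1}{1+a}G(u(x,0))
$$
is constant in $x$; the weighted $L^2$-decay of $u_x,u_y$ at $x=\pm\infty$ together with $G(u(x,0))\to G(\pm 1)$ gives $\mathcal H\equiv-\tfrac{1}{1+a}G(\pm 1)$, hence $G(1)=G(-1)$. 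Third, the Hamiltonian estimate (which effectively bounds the weighted integral of $u_y^2$ by that of $u_x^2$) combined with the identity gives $G(u(x,0))\ge G(\pm 1)$ pointwise, strictly whenever $u_x(x,\cdot)\not\equiv 0$. Since $v'>0$ on $\partial\RR^2_+$ forces $u_x$ nontrivial, and $v$ maps $\RR$ onto $(-1,1)$, condition \eqref{nec2} follows.

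\textbf{Part (ii), compactness and the limit equation.} The bound $|v_s|\le 1$ is uniform in $s$, and the interior/boundary Schauder estimates of Section~4 applied to the extensions $u_s$ yield uniform $C^{2,\alpha}$-bounds on $v_s$ on each compact set of $\RR$, with constants bounded as $s\to 1$. A diagonal Arzel\`a--Ascoli extraction gives $v_{s_k}\to\bar v$ in $C^2_{\mathrm{loc}}(\RR)$ along a subsequence, with $\bar v(0)=0$, $\bar v'\ge 0$, and $|\bar v|\le 1$. To pass to the limit in $(-\partial_{xx})^s v_s = f(v_s)$, use the representation \eqref{fracpv} and split the P.V.\ integral at $|x-\bar x|=r$: a second-order Taylor expansion on the inner piece, together with the asymptotic $C_{1,s}\sim c_\ast(1-s)$ as $s\uparrow 1$, recovers $-\partial_{xx}$, while the outer piece is controlled by $|v_s|\le 1$ and is $O(C_{1,s}/r^{2s-1})=o(1)$ for fixed $r$. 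Sending first $s\to 1$ and then $r\to 0$ gives $(-\partial_{xx})^s v_s(x)\to -\bar v''(x)$ pointwise, hence $-\bar v''=f(\bar v)$ in $\RR$.

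\textbf{Identification of $\bar v$ as the layer.} Monotonicity and boundedness give limits $L_\pm:=\lim_{x\to\pm\infty}\bar v(x)\in[-1,1]$. The classical first integral $(\bar v')^2/2+G(\bar v)\equiv C$, together with $\bar v',\bar v''\to 0$ at $\pm\infty$, yields $C=G(L_\pm)$ and $f(L_\pm)=0$. Part~(i) applied to each $v_s$ gives the $s$-independent conclusions $G(1)=G(-1)$ and $G>G(\pm 1)$ on $(-1,1)$; combined with $G(L_\pm)=C$, this forces $L_\pm\in\{-1,1\}$, so by monotonicity $L_-=-1$ and $L_+=1$. The degenerate case $\bar v\equiv 0$ is ruled out by a uniform-in-$s$ lower bound on $v_s'(0)$ (from the Hopf / Harnack results of Section~4 applied to the extensions $u_s$, giving uniform bounds on the transition widths). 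Finally, the local layer with $\bar v(0)=0$ is unique (solving the ODE $\bar v'=\sqrt{2(G(\bar v)-G(\pm 1))}$ with $\bar v(0)=0$), so every subsequential limit agrees and the full sequence $\{v_s\}$ converges.

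The hardest step is the pointwise passage to the limit in the nonlocal operator: tracking the scaling $C_{1,s}/(1-s)\to c_\ast$ as $s\uparrow 1$ and controlling the $s$-dependent Taylor error in the inner region together with the tail contribution in the outer region, uniformly in $s$. A further subtlety is non-degeneracy, i.e., ensuring that $\bar v$ is not flat, which relies on uniform estimates for the extensions $u_s$ as $a_s\to -1$ that must be checked carefully in the Muckenhoupt-$A_2$ framework of Section~4.
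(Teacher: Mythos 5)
Part (i) follows the paper's route: translation compactness gives $f(\pm1)=0$, the Hamiltonian identity gives $G(1)=G(-1)$, and the Hamiltonian estimate at $y=0$ gives $G(u(x,0))>G(1)$ for every $x$ and hence $G>G(1)$ on $(-1,1)$. (Your heuristic that the estimate ``bounds the $u_y^2$-integral by the $u_x^2$-integral'' is not the mechanism; the estimate \eqref{mmestimate} already delivers the strict inequality directly at $y=0$, but the conclusion you draw is correct.)

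For Part (ii), your passage to the limit in the nonlocal operator is a genuinely different route from the paper. The paper works entirely with the extension: it tests the weak formulation of the $L_a$-problem against $\eta_1(x)\eta_2(y)$ and uses that $(1+a)\,y^a\,dy\rightharpoonup\delta_0$ weakly as measures to recover $-\bar v''=f(\bar v)$. You instead split the P.V.\ integral in \eqref{fracpv} at scale $r$, Taylor-expand the inner piece, use $C_{1,s}/(2(1-s))\to 1$, and control the outer piece via $C_{1,s}\to 0$. With the $s$-uniform $C^{2,\beta}$ estimates of Lemma~\ref{regNL} this computation is legitimate (and, as you remark, the outer piece already vanishes as $s\uparrow 1$ for fixed $r$, so no diagonal limit is needed). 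This is a valid alternative to the paper's extension-based argument.

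However, there is a genuine gap in your identification of $\bar v$ as a layer. You write that $G(L_\pm)=C$, together with $G>G(\pm1)$ on $(-1,1)$ from Part~(i), ``forces $L_\pm\in\{-1,1\}$''. This does not follow. All the classical first integral gives is $C=G(L_+)=G(L_-)$ and $G<C$ on $(L_-,L_+)$, together with $f(L_\pm)=0$; nothing prevents $L_\pm$ from being two interior critical points with $G(L_-)=G(L_+)>G(1)$ (think of a potential with a symmetric ``W'' profile inside $(-1,1)$ whose inner barrier tops are equal). Ruling this out requires the extra piece of information that the paper extracts by passing to the limit in the Hamiltonian identity \eqref{mmequaltemp} for $u_a$: dropping the nonnegative $\int y^a(\partial_y u_a)^2$ term one obtains, in the limit, $\tfrac12(\bar v')^2(x)\ge G(\bar v(x))-G(1)$, which combined with the first integral $\tfrac12(\bar v')^2=G(\bar v)-G(L^+)$ gives $G(L^+)\le G(1)$; then $G>G(1)$ in $(-1,1)$ and $L^+\ge 0$ force $L^+=1$, and similarly $L^-=-1$. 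Your proposed substitute --- a ``uniform-in-$s$ lower bound on $v_s'(0)$'' via Hopf or Harnack --- would at best rule out $\bar v\equiv 0$; it cannot exclude a nonconstant heteroclinic connecting two interior equilibria. The Hamiltonian-limit inequality, or some equivalent quantitative input, is essential here and is missing from your argument.
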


Conditions \eqref{nec1} and \eqref{nec2} express that is $G$ is of double-well type 
and $f$ is of bistable balanced type. Note that 
the statement $G(1)=G(-1)$ is equivalent to 
$$
\int_{-1} ^{1} f =0.
$$

Theorem \ref{necLayers} is actually a consequence of the following Hamiltonian
equality and estimate, which are of independent interest.
We have introduced the Hamiltonian associated to problem \eqref{extAlpha} 
in subsection~1.1 above. The following Hamiltonian identity for layer solutions
in $\R$ states the conservation of the Hamiltonian in ``time'' ---recall that
$x$ plays the role of time variable. Instead, the Hamiltonian inequality below
is the analogue in dimension $1$ of the classical Modica estimate for bounded solutions
of semilinear
equations $\Delta v-G'(v)=0$ in $\mathbb{R}^n$, which states that the kinetic energy 
is bounded at every point by the potential energy, i.e., $(1/2)|\nabla v|^2\leq G(v)$
everywhere in $\mathbb{R}^n$, whenever $G\geq 0$ in $\RR$. 

Notice that our Modica-type estimate is stated for $n=1$. 
It is still an open problem for $n \geq 2$.

The theorem also provides an asymptotic result as $s$ goes to $1$, in which we
recover the classical Hamiltonian identity.

\begin{theorem}\label{modthm}
{\rm (i)} Let $a \in (-1,1)$ and $f$ any $C^{1,\gamma}(\RR)$ function, 
for some $\gamma >\max(0,a)$. Let $n=1$ and $u$ be a layer solution of \eqref{extAlpha}. 
Then, for
every $x\in\RR$ we have 
$\int_0^{+\infty}t^{a}|\nabla u(x,t)|^2 dt<\infty$ and the Hamiltonian equality
\begin{equation}\label{mmequal}
(1+a)\int_0^{+\infty} \frac{1}{2} t^a \left\{u_x^2(x,t)-u_y^2(x,t)\right\} dt=G(u(x,0))-G(1). 
\end{equation}
Furthermore, for all $y \geq 0$ and  $x \in \RR$ we have
\begin{equation}\label{mmestimate}
(1+a) \int_0^y\dfrac{t^{a}}{2}\left\{
u_x^2(x,t)-u_y^2(x,t)\right\} dt < G(u(x,0))-G(1).
\end{equation}

{\rm (ii)} Let $f$ be any $C^{1,\gamma}(\RR)$ function with $\gamma \in (0,1)$, $n=1$ and 
$\left \{ u_a \right \}$, with $a=a_k \in(-1,1)$ and $a_k\downarrow -1$ 
be a sequence of layer solutions of 
\eqref{extAlpha} (with $u$ replaced by $u_a$ for each $a$)
such that $u_a(0,0)=0$. Then, $\lim_{a=a_k \downarrow -1} u_a(\cdot,0)=\overline v$
in the uniform $C^2$ convergence on every compact set of $\R$,
where $\overline v$ is the layer solution of
$ -\overline v ''=f(\overline v)$ in $\RR$ with $\overline v(0)=0$. In addition,
for every $x\in\RR$ we have
$$
\lim_{a\downarrow -1} (1+a)\int_0^{+\infty} \frac{1}{2} t^a (u_a)_x^2(x,t) dt=
 \frac{1}{2} \overline v'(x)^2=G(\overline v(x))-G(1) 
$$
and
$$
\lim_{a\downarrow -1} (1+a)\int_0^{+\infty} \frac{1}{2} t^a (u_a)_y^2(x,t) dt=0.
$$\end{theorem}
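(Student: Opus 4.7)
The plan is to prove the Hamiltonian identity \eqref{mmequal} by showing a natural energy is independent of $x$, to derive the strict estimate \eqref{mmestimate} via a rigidity argument, and finally to pass to the limit $a\downarrow -1$ using the identity together with Theorem~\ref{necLayers}(ii).

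For \eqref{mmequal} I would set
\[
H(x) := (1+a)\int_0^{+\infty}\frac{t^{a}}{2}\bigl\{u_x^2(x,t) - u_y^2(x,t)\bigr\}\,dt - G(u(x,0)),
\]
after first verifying that $\int_0^\infty t^a|\nabla u(x,\cdot)|^2\,dt<\infty$ locally uniformly in $x$ via the Poisson-kernel representation and the Schauder estimates of Sections~3--4. Differentiating under the integral, and integrating by parts in $t$ the cross term $\int t^a u_y u_{xy}\,dt$ using $(t^a u_y)_t=-t^a u_{xx}$ (the PDE) together with the boundary identity $(1+a)(-\lim_{t\downarrow 0}t^a u_y)=f(u(x,0))$, the terms collapse to $H'\equiv 0$. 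To identify the constant I would take $x\to+\infty$: translation invariance of the problem together with the Schauder/Harnack estimates of Section~4 give $u(x+R,\cdot)\to 1$ locally in $C^1$, while the uniform-in-$x$ decay of $|\nabla u|$ in $t$ coming from the Poisson representation makes the integral vanish in the limit, yielding $H\equiv -G(1)$.

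For the strict estimate I would rewrite \eqref{mmestimate} as $\Phi(x,Y)>0$ for all $Y\ge 0$, where
\[
\Phi(x,Y) := (1+a)\int_Y^{+\infty}\frac{t^{a}}{2}\bigl\{u_x^2-u_y^2\bigr\}\,dt,
\]
so that $\Phi(x,0)=G(u(x,0))-G(1)$, $\Phi(x,Y)\to 0$ as $Y\to\infty$, and a direct computation gives $\partial_x\Phi(x,Y)=(1+a)Y^{a}u_x u_y$ for $Y>0$. The argument would be by contradiction: if $\Phi(x_0,Y_0)\le 0$ somewhere, the boundary limits $\Phi(\pm\infty,Y_0)=0$ (obtained from $u(x,\cdot)\to\pm 1$ and the Liouville theorem of Section~4) together with the strict monotonicity $u_x>0$ would force a critical configuration; a strong-maximum / unique-continuation argument for $L_a$ would then contradict the layer limits~\eqref{limits}.

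Part (ii) follows by combining (i) with Theorem~\ref{necLayers}(ii). The latter yields $u_{a_k}(\cdot,0)\to\overline v$ in $C^2_{\mathrm{loc}}(\RR)$, and the right-hand side of \eqref{mmequal} tends to $G(\overline v(x))-G(1)=\tfrac12\overline v'(x)^2$ by the classical Modica identity for the limiting ODE. Since $(1+a)\int_0^{\delta}t^a\,dt=\delta^{1+a}\to 1$ as $a\downarrow-1$ for every $\delta\in(0,1]$, the measure $(1+a)t^a\,dt$ concentrates at $t=0$; together with uniform Schauder estimates giving $(u_a)_x(x,t)\to\overline v'(x)$ locally uniformly in a neighborhood of $t=0$, a split of the integral at $t=\delta$ followed by $a\to-1$ and then $\delta\to 0$ produces $(1+a_k)\int_0^\infty t^{a_k}(u_{a_k})_x^2\,dt\to\overline v'(x)^2$. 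Subtracting from the Hamiltonian identity yields the vanishing of the $u_y^2$-integral. The main obstacle throughout is the strict inequality \eqref{mmestimate}: the equality alone does not preclude cancellation in the tail, so the argument must exploit the strict monotonicity $u_x>0$ and the degenerate structure of $L_a$ through a genuine rigidity statement; a secondary delicate point is justifying the $t$-integration by parts (in particular the vanishing of $t^{a}u_y u_x$ as $t\to+\infty$), which is controlled by the Poisson representation and the regularity estimates of Sections~3--4.
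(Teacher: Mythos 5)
Your treatment of the Hamiltonian equality \eqref{mmequal} matches the paper's Lemma~\ref{hamilton}: differentiate $H(x)$, integrate by parts in $t$ using the PDE, deduce $H'\equiv 0$, and identify the constant by letting $x\to+\infty$. Likewise your part (ii) argument (concentration of the measure $(1+a)t^a\,dt$ at $t=0$, splitting the integral at $t=\delta$, and citing the $C^2_{\mathrm{loc}}$ convergence of traces) is essentially the computation the paper carries out inside the proof of Theorem~\ref{asymps}; that part is fine.

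The genuine gap is the strict estimate \eqref{mmestimate}. You correctly introduce $\Phi(x,Y)=(1+a)\int_Y^\infty\frac{t^a}{2}\{u_x^2-u_y^2\}\,dt$ (this is $(1+a)\overline w$ in the paper's notation), record $\partial_x\Phi=(1+a)Y^a u_xu_y$, the boundary value $\Phi(x,0)=G(u(x,0))-G(1)$, and the decay at infinity. But then you invoke a ``strong-maximum / unique-continuation argument for $L_a$'' without ever producing a PDE that $\Phi$ satisfies, and it is precisely this PDE structure that drives the paper's proof. What is missing: a direct computation from $\overline w_y=-\frac{y^a}{2}(u_x^2-u_y^2)$, $\overline w_x=y^au_xu_y$, and $L_au=0$ gives the two identities
\begin{equation*}
L_a\overline w=-a\,y^{2a-1}u_x^2,\qquad L_{-a}\overline w=-a\,y^{-1}u_y^2,
\end{equation*}
whose right-hand sides have a definite sign only for $a\ge 0$ and $a\le 0$ respectively; the proof must therefore split on the sign of $a$, using $L_a\overline w\le 0$ together with the Hopf principle (Proposition~\ref{hopf}) when $a\ge 0$ and a direct boundary-derivative computation when $a<0$. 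For a would-be interior minimum ($y_0>0$), one must use $u_x>0$ to rewrite $L_{-a}\overline w + a\,y^{-1-a}\frac{u_y}{u_x}\,\overline w_x=0$ as a uniformly elliptic equation with continuous coefficients on $\{y>0\}$, to which the classical strong maximum principle applies. Without these concrete identities, the case split on $\operatorname{sign}(a)$, and the boundary-versus-interior dichotomy, there is no maximum principle to apply; ``would force a critical configuration'' does not constitute an argument. You acknowledge the need for ``a genuine rigidity statement'' but do not exhibit it, and exhibiting it is the whole content of Lemma~\ref{modica1}.
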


We emphasize once again that the previous estimate \eqref{mmestimate} is pointwise and nonlocal. 

The asymptotic result when $a \rightarrow -1$ (i.e., $s\to 1$) of part (ii) 
in the previous theorem allows to recover 
from \eqref{mmequal} the standard conservation of the Hamiltonian for the Laplacian. 
This will be presented in section~6 below.

In the case of radial solutions with limit at infinity, 
we establish the following result.
Here the dimension $n$ is arbitrary. 

\begin{theorem}\label{necBounds}
Let $s \in (0,1)$ and $f$ any $C^{1,\gamma}(\RR)$ function, 
for some $\gamma >\max(0,1-2s)$. 

Let $n>1$ and $v=v(x)=v(|x|)$ be a nonconstant radial solution of  
\begin{equation*}
(-\Delta )^s v= f(v)\,\,\,\,\mbox{in }\RR^n 
\end{equation*}
satisfying 
$$\lim_{|x| \rightarrow +\infty} v(|x|)=0. $$

Then, we have  
$$f(0)=0=G'(0) \,\,\,\,\mbox{and}\,\,\,\,\,G(0)>G(v(0)).$$
If, in addition, $v$ is decreasing in $|x|$, then 
$$f'(0)=-G''(0) \leq 0. $$ 
\end{theorem}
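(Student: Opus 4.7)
The proof divides into three parts: (i) $f(0)=G'(0)=0$, (ii) $G(0)>G(v(0))$, and (iii) $f'(0)\leq 0$ under monotonicity. The unifying tool is the local extension \eqref{extAlpha}: let $u=u(x,y)$ be the $L_a$-harmonic extension of $v$, which by uniqueness is radial in $x$, so $u=u(r,y)$ with $r=|x|$. For (i), I would take $|x_k|\to\infty$ and pass to the limit in $(-\Delta)^s v(x_k)=f(v(x_k))$. The right-hand side tends to $f(0)$ by continuity. For the left-hand side, one splits the principal-value integral \eqref{fracpv} into a near-diagonal piece, controlled by the uniform $C^2$ bound on $v$ coming from the regularity results of Sections~3--4, and a far piece that vanishes by dominated convergence since $v$ is bounded with $v\to 0$. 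Matching the limits forces $f(0)=0$.

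For (ii), the plan is to derive a radial Pohozaev--Hamiltonian identity in the spirit of Theorem~\ref{modthm}. Multiplying $L_a u=0$ by $u_r$, integrating over $y\in(0,\infty)$, and using $\lim_{y\downarrow 0} y^a u_y=-f(v)/(1+a)$, integration by parts gives
\begin{equation*}
\frac{d}{dr}\!\left[\tfrac12\!\int_0^{\infty}\!y^a(u_r^2-u_y^2)\,dy-\tfrac{G(v(r))}{1+a}\right]=-\tfrac{n-1}{r}\!\int_0^{\infty}\!y^a u_r^2\,dy,
\end{equation*}
which is non-positive since $n>1$. Denoting the bracket by $H(r)$, radial symmetry forces $u_r(0,\cdot)\equiv 0$, so $H(0)=-\tfrac12\int_0^{\infty} y^a u_y(0,y)^2\,dy-G(v(0))/(1+a)$, while the decay of $u$ at infinity gives $H(\infty)=-G(0)/(1+a)$. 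Integrating $H'$ over $(0,\infty)$ and using that $u_r\not\equiv 0$ (forced by nonconstancy of $v$) yields
\begin{equation*}
\tfrac{G(0)-G(v(0))}{1+a}\geq\tfrac12\!\int_0^{\infty}\!y^a u_y(0,y)^2\,dy+(n-1)\!\int_0^{\infty}\!\tfrac{1}{r}\!\int_0^{\infty}\!y^a u_r^2\,dy\,dr>0.
\end{equation*}

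For (iii), the plan is a Picone-type identity on the extension combined with a rescaling at infinity. Monotonicity and $v(\infty)=0$ imply $v(0)>0$, so $v>0$ on $\RR^n$ and $u>0$ on $\overline{\RR^{n+1}_+}$ by the strong maximum principle of Section~4. For $\Phi\in C_c^\infty(\overline{\RR^{n+1}_+})$, the pointwise Picone inequality $|\nabla\Phi|^2\geq\nabla(\Phi^2/u)\cdot\nabla u$, multiplied by $y^a$ and integrated over $\RR^{n+1}_+$, combined with $L_a u=0$ and the nonlinear boundary condition, yields
\begin{equation*}
\int_{\RR^{n+1}_+}\!y^a|\nabla\Phi|^2\,dx\,dy\geq\frac{1}{1+a}\!\int_{\RR^n}\!\tfrac{f(v(x))}{v(x)}\,\Phi(x,0)^2\,dx.
\end{equation*}
Since $f(v)/v\to f'(0)$ as $|x|\to\infty$, fix $\epsilon>0$ and $R$ such that $f(v)/v\geq f'(0)-\epsilon$ on $\{|x|>R\}$, and then test with $\Phi(x,y)=\phi((x-x_0)/\lambda,\,y/\lambda)$ for $|x_0|>R+\lambda$. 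The rescaling $(x,y)\mapsto\lambda(x,y)$ shows that the ratio of the left-hand side to $\int\Phi(\cdot,0)^2\,dx$ is of order $\lambda^{a-1}=\lambda^{-2s}$, which tends to $0$; this forces $f'(0)-\epsilon\leq 0$, and arbitrariness of $\epsilon$ concludes.

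The main technical obstacle, shared by (i) and (ii), is the quantitative decay of $u$, $u_r$, and $y^a u_y$ as $r\to\infty$ needed both to pass to the limit in the nonlocal operator and to identify $H(\infty)$. This should follow by applying the Schauder estimates, Harnack inequality, and Liouville theorems for $L_a$ developed in Section~4 to the translated extensions $u(\cdot+x_k,\cdot)$: translation-invariance of $L_a$ and the vanishing of their boundary traces at infinity force, via compactness and a Liouville-type argument, uniform convergence on compacta, together with derivatives, to the zero solution.
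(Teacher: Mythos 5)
Your proposal is correct and parts (i) and (ii) essentially reproduce the paper's argument: part (i) is proved in the paper via a compactness/translation argument on the extension (Lemma~\ref{infinit}), whereas you work directly with the integral representation \eqref{fracpv}, but both rest on uniform $C^2$ bounds from Lemma~\ref{regNL} and the decay $v\to 0$; part (ii) is exactly the radial Hamiltonian monotonicity of Lemma~\ref{hamrad} evaluated at $r=0$ and $r=\infty$, with the equality case ruled out via $u_r\not\equiv 0$ (note your ``$\geq$'' in the displayed relation is really an equality, and strictness of the final ``$>0$'' needs the one-line remark that $u_r\equiv 0$ would force $u$ to depend only on $y$ and hence, being bounded and $L_a$-harmonic, to be constant).

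Part (iii) is where you genuinely diverge from the paper, and your route works. The paper differentiates the equation, sets $\psi=-u_r>0$, derives the linearized problem for $\psi^{x_0}(x,y)=\psi(x-x_0,y)$ which has the extra zero-order term $\frac{n-1}{|x-x_0|^2}y^a\psi^{x_0}$, multiplies the Neumann condition by $\xi^2/\psi^{x_0}$, and runs a stability-type argument that, after letting $|x_0|\to\infty$, yields $f'(0)\leq\int_{\mathcal C_R}y^a|\nabla\xi|^2 / \int_{\Gamma_R^0}\xi^2$; then the explicit product test function $\xi_R=\varphi_R(x)h_R(y)$ shows this Rayleigh quotient is $O(R^{-2s})$. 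You instead use Picone's inequality directly on the positive extension $u$ itself (exploiting $v>0$, which follows from monotonicity and $v(\infty)=0$), bypassing the differentiation of the equation and the singular $\frac{n-1}{|x-x_0|^2}$ term entirely, and replace the explicit eigenfunction computation with a self-similar rescaling $\Phi(x,y)=\phi((x-x_0)/\lambda,y/\lambda)$ whose Rayleigh quotient is $O(\lambda^{-2s})$. The two arguments produce the same $R^{-2s}$ (resp.\ $\lambda^{-2s}$) decay; the paper's version applies more directly to non-sign-definite $v$ provided $u_r$ has a sign, whereas yours is technically lighter (no linearization, no singular weight) but relies on the positivity of the solution rather than of its radial derivative. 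Both legitimately use the monotonicity hypothesis, so neither is strictly more general here.
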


The statement $G(0)>G(v(0))$ is equivalent to 
$$
\int_0 ^{v(0)} f >0.
$$

As in the case of layer solutions, Theorem \ref{necBounds} relies on 
the following statement about the Hamiltonian. 
In this next theorem we do not assume $u(\cdot,0)$ to have a limit at infinity.
 
\begin{theorem}\label{modthmGS}
Let $a \in (-1,1)$ and $f$ any $C^{1,\gamma}(\RR)$ function, 
for some $\gamma >\max(0,a)$. 

Let $n \geq 1$ and $u$ be a bounded
solution of \eqref{extAlpha}
which is radial in $x$, i.e., $u(x,y)=u(|x|,y)$.

Then, $\int_0^{+\infty}t^a|\nabla u(r,t)|^2 dt<\infty$
for every $r\geq 0$, and the quantity 
\begin{equation}\label{mmequalGS}
(1+a)\int_0^{+\infty} \frac{t^a}{2} \Big \{ u_r^2(r,t) - u_ y^2(r,t) \Big \}\,dt -G(u(r,0))
\end{equation}
is nonincreasing in $r\geq 0$. 
\end{theorem}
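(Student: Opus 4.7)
The plan is to compute $H'(r)$ directly, where $H(r)$ denotes the expression in \eqref{mmequalGS}, and show it equals a manifestly nonpositive quantity. The radial extension problem for $u(r,y)$ reads
\begin{equation*}
u_{yy}+\frac{a}{y}u_y+u_{rr}+\frac{n-1}{r}u_r=0 \quad \text{in } \{r>0,\,y>0\},
\end{equation*}
with Neumann condition $-\lim_{y\downarrow 0}y^a u_y(r,y)=\frac{1}{1+a}f(u(r,0))$. I would first establish the finiteness of $\int_0^\infty t^a|\nabla u(r,t)|^2\,dt$ and sufficient decay of $t^a u_y u_r$ as $t\to\infty$: using the Poisson kernel representation from Section~3 together with the boundedness of $u$ and the $C^{1,\gamma}$ regularity of $f$ gives enough control to show that $y^{a/2}|\nabla u(r,y)|$ decays like a negative power of $y$ at infinity, which handles both the integrability and the vanishing of the boundary term at $\infty$. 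This step is essentially a standard decay argument for $L_a$-harmonic extensions; I would reduce it to the Schauder/Poisson estimates that the authors set up in Section~3.

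Next I would justify differentiating under the integral sign (again via the Poisson-type bounds and the $C^{2}$ regularity of $u$ away from $\{y=0\}$ plus uniform-in-$r$ decay on compact $r$-intervals), getting
\begin{equation*}
H'(r)=(1+a)\int_0^{+\infty}t^a\bigl\{u_r u_{rr}-u_y u_{ry}\bigr\}\,dt - G'(u(r,0))\,u_r(r,0).
\end{equation*}
The second term inside the integral I integrate by parts in $t$, writing $u_y u_{ry}=u_y(u_r)_y$ and using
\begin{equation*}
(t^a u_y)_y=a t^{a-1}u_y+t^a u_{yy}=-t^a\Bigl(u_{rr}+\frac{n-1}{r}u_r\Bigr),
\end{equation*}
by the PDE. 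The boundary term at $t=0$ produces $-\bigl(\lim_{t\downarrow 0}t^a u_y\bigr)u_r(r,0)=\frac{1}{1+a}f(u(r,0))u_r(r,0)$, and the one at $t=\infty$ vanishes by the decay step above. Combining, the two integrals involving $u_r u_{rr}$ cancel, and the boundary contribution $f(u(r,0))u_r(r,0)$ cancels against $-G'(u(r,0))u_r(r,0)=f(u(r,0))u_r(r,0)$ appearing in $H'(r)$, leaving exactly
\begin{equation*}
H'(r)=-(1+a)\,\frac{n-1}{r}\int_0^{+\infty} t^a u_r^2(r,t)\,dt\le 0,
\end{equation*}
since $1+a>0$ and $n\ge 1$. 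This both proves monotonicity for $n\ge 2$ and recovers the conservation law (constant $H$) for $n=1$, consistent with \eqref{mmequal}.

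The main obstacle, I expect, will not be the algebraic identity (which is a clean one-line cancellation once set up) but rather the analytic justification of the three ingredients used: (a) the $L^1_{t^a\,dt}$ integrability of $|\nabla u|^2$ and decay as $t\to\infty$, (b) the legitimacy of interchanging $\partial_r$ with the improper integral $\int_0^\infty t^a(\cdots)\,dt$, and (c) the existence of the one-sided limit $\lim_{t\downarrow 0} t^a u_y(r,t)$ with the expected value $-f(u(r,0))/(1+a)$ holding uniformly on compact sets in $r$. For (a) and (c) I would appeal to the Poisson kernel/Schauder estimates developed in Sections~3--4, applied uniformly on slabs $\{|r-r_0|\le\delta\}$; for (b) a dominated convergence argument using these uniform bounds suffices. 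Once these are in place, the computation above delivers the theorem.
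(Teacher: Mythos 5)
Your proposal is correct and follows essentially the same route as the paper's own proof (Lemma~\ref{hamrad} together with Lemma~\ref{haminf}): define the Hamiltonian quantity, differentiate under the integral sign (justified by the gradient bounds and integrability from Proposition~\ref{integrability} and Lemma~\ref{haminf}), integrate by parts in $t$, use the radial form of $L_a u=0$ and the Neumann boundary condition, and observe that the only surviving term is $-(1+a)\frac{n-1}{r}\int_0^\infty t^a u_r^2\,dt\le 0$. The only minor point is a sign slip in the way you describe the boundary term at $t=0$ (the contribution to $(1+a)w'(r)$ is $-f(u(r,0))u_r(r,0)$, which is what cancels $-G'(u(r,0))u_r(r,0)=+f(u(r,0))u_r(r,0)$), but this does not affect the conclusion.
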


\section{Local realization of the fractional Laplacian and results on
  degenerate elliptic equations}

This section is concerned with the relation between the local problem
\eqref{extAlpha} and the nonlocal problem \eqref{problem}. 
We collect also several results on degenerate elliptic equations
with $A_2$ weights. 

We first introduce the spaces
$$H^s(\RR^n)=\left \{ v \in L^2(\RR^n)\,\,:\,\,|\xi|^{s} (\mathcal F v)(\xi) \in L^2(\RR^n) 
\right \},$$
where $s \in (0,1)$ and $\mathcal F$ denotes Fourier transform.
For $\Omega \subset \RR^{n+1}_+$ a Lipschitz domain (bounded or unbounded) and $a \in (-1,1)$,
we denote
$$H^1(\Omega,y^a)=\left \{ u \in L^2
(\Omega,y^a \,dx\,dy)\,\,:\,\,|\nabla u| \in L^2(\Omega,y^a \,dx\,dy) \right \}.$$

\subsection{Local realization of the fractional Laplacian}
The fractional  
Laplacian can be defined in various ways, which we review now. 
It can be defined using Fourier transform by 
$$
\mathcal F((-\Delta)^s v) = \left| \xi \right|^{2s}\mathcal F(v), 
$$
for $v\in H^s(\R^n)$. It can also
be defined through the kernel representation (see the book by Landkof \cite{landkof})
\begin{equation}\label{defPV}
(-\Delta)^{s} v(x)=C_{n,s}\ \textrm{P.V.}\int_{\mathbb{R}^n} \frac{v(x)-v(\overline x)}
{|x-\overline x|^{n+2s}}\,d\overline x,
\end{equation}
for instance for $v\in\mathcal{S}(\RR^n)$, the Schwartz space of rapidly decaying functions. 
One can also define the fractional Laplacian acting on spaces of functions with weaker regularity. 
Indeed, following \cite{SilCPAM}, one defines the space  $\mathcal S_s(\RR^n)$
of $C^\infty$ functions $v$ such that for every $k \geq 0$, the quantity $(1+|x|^{n+2s})D^kv(x)$ 
is bounded. We denote $\mathcal S'_s(\RR^n)$ its dual. It is then possible to check that $(-\Delta)^s$ 
maps $\mathcal S (\RR^n)$ into  $\mathcal S_s (\RR^n)$. By duality, this allows to define the 
fractional Laplacian for functions in the space
$$\mathcal{L}_s(\RR^{n}) := \left \{ v \in L^1_{\rm loc}(\RR^n) \, :\,  \int_{\RR^{n}} 
\frac{|v(x)|}{(1+|x|)^{n+2s}}\, dx< \infty \right \}$$
$$=L^1_{\rm loc}(\RR^n) \cap \mathcal S'_s(\RR^n).$$
For $v\in \mathcal{L}_s(\RR^{n}) \cap C^2_{\rm loc}(\RR^{n})$, the integral in \eqref{defPV} is well 
defined. This is clear for
$|\overline x|$ large. For the Cauchy principal value to be well defined
(as $\overline x\to x$), it suffices to assume that $v$ is $C^2_{\rm loc}(\RR^{n})$. 
In particular, expression \eqref{defPV} defines the operator on the type of
solutions that we consider, since they will always be bounded in $\RR^n$ and locally
$C^2$. See \cite{SilCPAM, landkof} for more comments on the various definitions
of the fractional Laplacian and their agreement.
We refer the reader to the book by Landkof \cite{landkof} where an
extensive study of integro-differential operators with Martin-Riesz
kernels, i.e., kernels of the type (up to a normalizing constant) $1/|z|^{n+2s}$ is presented.

It is well known that one can see the operator
$(-\Delta)^{1/2}$ by considering it as the Dirichlet to Neumann operator associated to the
harmonic extension in the halfspace, paying the price to add a new
variable. In \cite{cafS}, Caffarelli and Silvestre proved that such a kind of
realization is also possible for any power of the Laplacian between $0$
and $1$, as follows.

Given $s\in(0,1)$, let $a=1-2s\in(-1,1)$. It is well known that the space $H^s(\R^n)$ 
coincides with the trace on $\partial\RR^{n+1}_{+}$ of $H^1(\RR^{n+1}_+,y^a)$.
In particular, every $v\in H^s(\R^n)$ is the trace of a function $u\in L^2_{\rm loc}(\RR^{n+1}_+,y^a)$ 
such that $\nabla u \in L^2(\RR^{n+1}_+,y^a)$.  
In addition, the function $u$ which minimizes
\begin{equation} \label{argmin} 
{\rm min}\left\{ \int_{\R^{n+1}_{+}}y^a \left| \nabla u \right|^2\;dx dy \; : \;  
u|_{\partial\R^{n+1}_{+}}=v\right\}
\end{equation} 
solves the Dirichlet problem
\begin{equation}\label{bdyFrac2} 
\left \{
\begin{aligned} 
L_a u:= \textrm{div\,} (y^a \nabla u)&=0 \qquad 
{\mbox{ in $\RR^{n+1}_+$}} 
\\
u&= v  
\qquad{\mbox{ on $\partial\RR^{n+1}_+$.}}\end{aligned}\right . 
\end{equation} 
By standard elliptic regularity, $u$ is smooth in $\R^{n+1}_{+}$. It turns out that 
$-y^a u_{y} (\cdot,y)$ converges in $H^{-s}(\RR^n)$ to a distribution 
$h\in H^{-s}(\RR^n)$ as $y\downarrow 0$. That is, $u$ weakly solves
\begin{equation}\label{bdyFrac3} 
\left \{
\begin{aligned} 
\textrm{div\,} (y^a \nabla u)&=0 \qquad 
{\mbox{ in $\RR^{n+1}_+ $}} 
\\
-y^a \partial_y u &= h
\qquad{\mbox{ on $\partial\RR^{n+1}_+$.}}\end{aligned}\right . 
\end{equation}
Consider the Dirichlet to Neumann operator 
$$
\begin{aligned}
&\Gamma_a: H^s(\RR^n)\to H^{-s}(\RR^n)\\
&\qquad \quad v\mapsto \Gamma_{a}(v)=  h:=
 - \displaystyle{\lim_{y \rightarrow 0^+}} y^{a} \partial_y u, 
\end{aligned}
$$
where $u$ is the solution of \eqref{bdyFrac2}. 
Then, we have:

\begin{theorem}[\cite{cafS}]\label{realization}
For every $v\in H^s(\R^n)$,
\begin{equation*}
(-\Delta)^s v= d_s\Gamma_{a}(v)= 
- d_s \displaystyle{\lim_{y \rightarrow 0^+}} y^{a} \partial_y u,
\end{equation*}
where $a=1-2s$, $d_s$ is a positive constant depending only on~$s$,
and the equality holds in the distributional sense.
\end{theorem}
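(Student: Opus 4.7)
\textbf{Proof proposal for Theorem \ref{realization}.} The plan is to diagonalize the problem in the tangential Fourier variable and reduce the PDE $L_au=0$ to an explicitly solvable ODE in $y$. Writing $\hat u(\xi,y)$ for the partial Fourier transform in $x$ of the solution of \eqref{bdyFrac2}, the equation becomes the one-parameter family of ODEs
\begin{equation*}
\partial_y\bigl(y^a\partial_y\hat u(\xi,y)\bigr)-|\xi|^{2}\,y^a\hat u(\xi,y)=0,\qquad \hat u(\xi,0)=\hat v(\xi),
\end{equation*}
with $\hat u(\xi,\cdot)$ required to be bounded (or more precisely, to produce a function of finite weighted energy) as $y\to+\infty$. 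I would look for the self-similar ansatz $\hat u(\xi,y)=\hat v(\xi)\,\varphi(|\xi|y)$, reducing matters to a single ODE $(t^a\varphi')'-t^a\varphi=0$ with $\varphi(0)=1$ and $\varphi$ decaying at infinity; the unique such $\varphi$ is (up to normalization) given by $\varphi(t)=c_s\,t^sK_s(t)$, where $K_s$ is the modified Bessel function of the second kind.

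Next, I would compute the conormal derivative in Fourier. Writing $t=|\xi|y$ so that $y^a=|\xi|^{-a}t^a$, a direct differentiation gives
\begin{equation*}
-y^a\partial_y\hat u(\xi,y)=-|\xi|^{1-a}\hat v(\xi)\,t^a\varphi'(t)=-|\xi|^{2s}\hat v(\xi)\cdot t^a\varphi'(t).
\end{equation*}
The Bessel asymptotics $K_s(t)\sim \tfrac12\Gamma(s)(t/2)^{-s}$ as $t\downarrow 0$ (for $s\in(0,1)$) yield $\lim_{t\downarrow 0}t^a\varphi'(t)=-d_s^{-1}$ for an explicit positive constant $d_s$ depending only on $s$. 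Passing to the limit $y\downarrow 0$ therefore produces
\begin{equation*}
-\lim_{y\downarrow 0}y^a\partial_y\hat u(\xi,y)=d_s^{-1}|\xi|^{2s}\hat v(\xi)=d_s^{-1}\,\widehat{(-\Delta)^s v}(\xi),
\end{equation*}
which is the announced identity once one inverts the Fourier transform.

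Two points require care. First, one must verify that the candidate $u$ built by the Fourier representation is indeed the minimizer of \eqref{argmin}; this follows from Plancherel together with the identity
\begin{equation*}
\int_{\mathbb R^{n+1}_+}y^a|\nabla u|^2\,dx\,dy=c_s\int_{\mathbb R^n}|\xi|^{2s}|\hat v(\xi)|^2\,d\xi,
\end{equation*}
obtained by integrating the ODE against $\overline{\hat u}$ and using the boundary contribution just computed. Second, the limit $-y^a\partial_y u\to h$ must be interpreted in the correct distributional sense; the natural setting is convergence in $H^{-s}(\mathbb R^n)$, which is established by multiplying the equation by a test function $\psi\in H^s(\mathbb R^n)$, extending $\psi$ by the same Poisson formula, and using the weak form of Green's identity in the weighted halfspace. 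The main obstacle is thus the careful analysis of the Bessel ODE near $t=0$ in order both to identify $\varphi$ as the unique admissible solution and to extract the explicit constant $d_s$; everything else is bookkeeping built on top of Plancherel's theorem and the $A_2$-regularity framework of \cite{FKS,FJK}.
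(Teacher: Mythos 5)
The paper itself does not prove Theorem \ref{realization}; it is quoted directly from Caffarelli and Silvestre \cite{cafS}, so there is no internal proof to compare against. Your Fourier-side argument is nonetheless one of the standard routes to the result (and is, in spirit, the reduction sketched in \cite{cafS}, Section 3.1): diagonalize in $\xi$, observe the self-similar structure $\hat u(\xi,y)=\hat v(\xi)\varphi(|\xi|y)$, identify $\varphi$ with the decaying Bessel profile $c_s t^s K_s(t)$, and extract the constant from the $t\downarrow 0$ behavior. The bookkeeping is correct: with $c_s=2^{1-s}/\Gamma(s)$ one gets $\lim_{t\downarrow 0}t^a\varphi'(t)=-2^{1-2s}\Gamma(1-s)/\Gamma(s)=-d_s^{-1}$, which matches the value of $d_s$ recorded in Remark \ref{constant}, and then Plancherel gives $\int y^a|\nabla u|^2 = d_s^{-1}\int|\xi|^{2s}|\hat v|^2$, confirming finite energy and the $H^{-s}$ interpretation of the Neumann trace.

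One small imprecision: the leading asymptotic $K_s(t)\sim\tfrac12\Gamma(s)(t/2)^{-s}$ only normalizes $\varphi(0)=1$; it does \emph{not} by itself yield $\lim_{t\downarrow 0}t^a\varphi'(t)$. For that you need either the next-order term in the small-$t$ expansion of $K_s$ (the $(t/2)^s$ term, valid for $0<s<1$) or, more cleanly, the recursion $\bigl(t^sK_s(t)\bigr)'=-t^sK_{s-1}(t)$ together with $K_{s-1}=K_{1-s}$ and the leading asymptotic of $K_{1-s}$. Also worth making explicit is why $\varphi$ is unique up to scaling among decaying solutions: the second fundamental solution $t^sI_s(t)$ grows exponentially, so the finite-energy (equivalently, bounded) requirement at $y=\infty$ selects $K_s$. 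With those two points spelled out, the argument is complete and correct.
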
  

In other words, given $h\in H^{-s}(\RR^n)$, a function $v\in H^s(\RR^n)$ solves the equation 
$(-\Delta)^{s} v=d_sh$ in $\mathbb R^n$
if and only if its extension $u\in H^1(\RR^{n+1}_+,y^a)$ solves \eqref{bdyFrac3}.
By duality, the same relation can be stated when $v \in 
\mathcal{L}_s(\RR^{n})$ ---as it is the case of the solutions considered in this paper.

\subsection{Degenerate elliptic equations with $A_2$ weights}
According to the previous result, we must study the
operator $L_{a}=\textrm{div\,}(y^{a} \nabla)$ in $\mathbb{R}^{n+1}_+$,
whose weight $y^{a}$ belongs to the class $A_2$ since $a\in (-1,1)$. 
Through a reflection method, it will be useful to consider the equation 
in domains $\Omega\subset\RR^{n+1}$ not necessarily contained in 
$\RR^{n+1}_+$. In such case, we extend the weight $y^a$ by $|y|^a$
for $y<0$. That is, we define
\begin{equation}\label{operallspace}
L_{a}u:=\textrm{div\,}(|y|^{a} \nabla u) \qquad\text{in }
\Omega\subset\mathbb{R}^{n+1}.
\end{equation}

In a series of papers (\cite{FKS,FJK}),  
Fabes, Jerison, Kenig, and Serapioni developed a systematic theory for this class
of operators: existence of weak solutions, Sobolev embeddings, Poincar\'e inequality, Harnack
inequality, local solvability in H\"older spaces, and estimates on the
Green's function. 

In particular, as a consequence of a Poincar\'e inequality related to  $A_2$
weights, they established an existence result (via the Lax-Milgram theorem)
and a H\"older continuity result.
The following three results for $L_a$ as in \eqref{operallspace}, with
$a\in (-1,1)$ follow from results of \cite{FKS}, stated there for general
$A_2$ weights. More precisely, they follow respectively from 
Theorem~2.2, Theorems~2.3.12 and ~2.3.15 (and Remark~1 preceding it),
and Lemma~2.3.5 of \cite{FKS}.

\begin{theorem} [Solvability in Sobolev spaces \cite{FKS}] \label{solveFKS}
Let $\Omega\subset\RR^{n+1}$ be a smooth bounded domain, 
$h=(h_1,...,h_{n+1})$ satisfy $|h|/|y|^a \in L^2(\Omega,|y|^a)$, and 
$g \in H^1(\Omega,|y|^a)$. Then, there exists a unique solution  
$u\in H^1(\Omega,|y|^a)$ of 
$L_a u=-{\rm div }\, h$ in $\Omega$ with $u-g \in H^1_0(\Omega,|y|^a)$.
\end{theorem}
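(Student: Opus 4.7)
The plan is to apply the Lax--Milgram theorem on the Hilbert space $H^1_0(\Omega,|y|^a)$, after reducing to homogeneous boundary data. Set $w=u-g$; then we seek $w\in H^1_0(\Omega,|y|^a)$ satisfying, in the weak sense,
\begin{equation*}
\int_\Omega |y|^a \nabla w \cdot \nabla \varphi \, dx dy
= \int_\Omega h\cdot \nabla \varphi \, dx dy - \int_\Omega |y|^a \nabla g \cdot \nabla \varphi\, dx dy
\end{equation*}
for every $\varphi\in H^1_0(\Omega,|y|^a)$. I would take as bilinear form
$B(w,\varphi)=\int_\Omega |y|^a \nabla w\cdot \nabla \varphi\, dxdy$
and as linear functional $F(\varphi)=\int_\Omega h\cdot \nabla \varphi - |y|^a \nabla g\cdot \nabla \varphi$.

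Continuity of $B$ is immediate from Cauchy--Schwarz in $L^2(\Omega,|y|^a)$. For the boundedness of $F$, the term involving $g$ is again Cauchy--Schwarz in $L^2(\Omega,|y|^a)$, and the term involving $h$ is estimated by writing
$\int_\Omega h\cdot \nabla\varphi \,dxdy = \int_\Omega (h/|y|^a)\cdot \nabla\varphi\, |y|^a\,dxdy$
and applying Cauchy--Schwarz in $L^2(\Omega,|y|^a)$, which is exactly why the hypothesis $|h|/|y|^a\in L^2(\Omega,|y|^a)$ is the right one.

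The main ingredient, and the only non-routine step, is coercivity of $B$ on $H^1_0(\Omega,|y|^a)$. This reduces to the weighted Poincaré inequality
\begin{equation*}
\int_\Omega |y|^a |\varphi|^2\, dxdy \le C(\Omega,a)\int_\Omega |y|^a |\nabla\varphi|^2\, dxdy
\qquad \text{for all } \varphi\in H^1_0(\Omega,|y|^a),
\end{equation*}
which holds because $|y|^a$ is an $A_2$ weight on $\mathbb{R}^{n+1}$ for $a\in(-1,1)$. This is precisely the content of the Poincaré inequality proved by Fabes, Kenig and Serapioni in \cite{FKS} for general $A_2$ weights (their Theorem~1.3 / Theorem~1.5), applied to the bounded smooth domain $\Omega$. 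Given this inequality, $B(\varphi,\varphi)\ge c\|\varphi\|^2_{H^1(\Omega,|y|^a)}$ with $c=c(\Omega,a)>0$.

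Once continuity, boundedness, and coercivity are in place, Lax--Milgram yields a unique $w\in H^1_0(\Omega,|y|^a)$ with $B(w,\varphi)=F(\varphi)$ for all $\varphi\in H^1_0(\Omega,|y|^a)$. Setting $u=w+g$ gives the desired unique solution in $H^1(\Omega,|y|^a)$ satisfying $u-g\in H^1_0(\Omega,|y|^a)$ and $L_a u = -\mathrm{div}\, h$ in the weak sense. The only subtle point is the weighted Poincaré inequality, which we invoke from the $A_2$ theory rather than prove here.
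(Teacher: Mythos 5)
Your proof is correct and follows exactly the route the paper itself attributes to \cite{FKS}: the paper does not reprove this result but cites Theorem~2.2 of \cite{FKS}, noting explicitly that solvability there is established ``as a consequence of a Poincar\'e inequality related to $A_2$ weights \dots via the Lax--Milgram theorem.'' Your reduction to homogeneous data, the identification of the right bilinear form and functional (including the observation that $|h|/|y|^a \in L^2(\Omega,|y|^a)$ is precisely what Cauchy--Schwarz requires after dividing and multiplying by $|y|^a$), and the use of the weighted Poincar\'e inequality for coercivity reproduce the cited argument faithfully.
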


\begin{theorem} [H\"older local regularity \cite{FKS}]\label{HolderFKS}
Let $\Omega\subset\RR^{n+1}$ be a smooth bounded domain
and $u$ a solution of $L_a u=-{\rm div\,} h$ in 
$\Omega$, where $|h|/|y|^a \in L^{2(n+1)}(\Omega,|y|^a)$. 
Then, $u$ is H\"older continuous in $\Omega$ with a H\"older exponent depending only on $n$ and $a$.
\end{theorem}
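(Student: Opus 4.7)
The plan is to adapt the De Giorgi--Nash--Moser regularity scheme to the weighted setting, exploiting that $|y|^a \in A_2(\RR^{n+1})$ for $a\in(-1,1)$. All of the basic ingredients of the classical unweighted theory have weighted analogues on the space of homogeneous type $(\RR^{n+1},|y|^a dx\,dy,|\cdot|)$; this is precisely the content of the Fabes--Jerison--Kenig--Serapioni program, and I sketch below how its pieces fit together.

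The first step is a weighted Caccioppoli inequality: testing $L_a u = -\textrm{div\,}h$ against $\eta^2 u$, where $\eta$ is a cutoff supported in a ball $B_{2r}\subset\Omega$, yields
\begin{equation*}
\int_{B_{2r}} |y|^a |\nabla u|^2 \eta^2 \,dx\,dy \leq C \int_{B_{2r}} |y|^a u^2 |\nabla \eta|^2 \,dx\,dy + C \int_{B_{2r}} |y|^{-a} |h|^2 \eta^2 \,dx\,dy,
\end{equation*}
after absorbing the mixed $h\cdot\nabla u$ term via Young's inequality. The $A_2$ condition on $|y|^a$ provides a weighted Poincar\'e inequality on balls (Muckenhoupt--Wheeden), which self-improves to a Sobolev-type inequality with some gain $\kappa=\kappa(n,a)>1$,
\begin{equation*}
\Big( \int_B |v|^{2\kappa} |y|^a \Big)^{1/\kappa} \leq C r^2 \int_B |\nabla v|^2 |y|^a, \qquad v\in H^1_0(B,|y|^a).
\end{equation*}
Combining Caccioppoli with this Sobolev gain and iterating across a geometric sequence of balls (Moser iteration) produces an $L^\infty$ bound on $u$ in terms of its weighted $L^2$-norm and the $L^{2(n+1)}(B_{2r},|y|^a)$-norm of $|h|/|y|^a$; the hypothesis $|h|/|y|^a \in L^{2(n+1)}(\Omega,|y|^a)$ is precisely what is needed to keep the $h$-contribution subcritical at every stage of the iteration.

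To promote boundedness to H\"older regularity, I would next prove a weak Harnack inequality for nonnegative supersolutions by Moser's trick of iterating on negative powers of $u$ and applying a John--Nirenberg estimate to $\log u$ in the weighted BMO space. Together with the $L^\infty$ bound from the previous step, this yields the standard oscillation-decay alternative: for some $\theta=\theta(n,a)\in(0,1)$ and some $\beta>0$,
\begin{equation*}
\mathop{\mathrm{osc}}_{B_{r/2}} u \leq \theta\,\mathop{\mathrm{osc}}_{B_r} u + C r^\beta \, \big\| \, |h|/|y|^a \, \big\|_{L^{2(n+1)}(B_r,|y|^a)} .
\end{equation*}
Iterating over a dyadic sequence of radii gives H\"older continuity of $u$ with exponent $\alpha=\alpha(n,a)>0$, depending only on $\theta$.

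The main obstacle is the weak Harnack inequality: in the unweighted case it rests on the Fefferman--Stein inequality and John--Nirenberg, and the passage to $A_2$ weights requires verifying that $|y|^a dx\,dy$ is doubling (a consequence of $A_2\Rightarrow A_\infty$), proving weighted analogues of John--Nirenberg, and obtaining exponential integrability of $\log u$ for positive supersolutions with constants depending only on $n$ and on the $A_2$ constant of the weight. For $|y|^a$ that constant depends only on $a$, which is what delivers dependence of the final H\"older exponent only on $n$ and $a$. These delicate weighted harmonic-analysis ingredients are exactly what is developed in the FKS papers invoked in the statement.
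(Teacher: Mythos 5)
The paper does not prove this statement at all: it records it as a specialization to the weight $|y|^a$ of Theorems~2.3.12 and~2.3.15 (and the preceding Remark~1) of Fabes--Jerison--Kenig--Serapioni, whose proof is exactly the $A_2$-weighted De Giorgi--Nash--Moser program you sketch (weighted Caccioppoli, weighted Poincar\'e--Sobolev with gain $\kappa(n,a)>1$, Moser iteration for local boundedness, weak Harnack via John--Nirenberg for $\log u$, then oscillation decay), with the $A_2$ constant of $|y|^a$ depending only on $a$ giving the stated dependence of the exponent. Your outline is a faithful reconstruction of that argument and hence essentially the same approach as the source the paper invokes; the only small slip is attributing the unweighted weak Harnack inequality to Fefferman--Stein, when the operative tool is John--Nirenberg applied to the BMO bound on $\log u$ obtained from the Caccioppoli estimate for supersolutions.
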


\begin{theorem} [Harnack inequality \cite{FKS}] \label{HarnackFKS}
Let $u$ be a positive solution of $L_a u=0$ in 
$B_{4R}(x_0)\subset\RR^{n+1}$. Then, $\sup_{B_R(x_0)} u \leq C \inf_{B_R(x_0)} u$
for some constant $C$ depending only on $n$ and $a$ ---and in particular, independent of~$R$.

As a consequence, bounded solutions of $L_a u=0$ in all of
$\RR^{n+1}$ are constant.
\end{theorem}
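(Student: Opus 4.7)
The first assertion (Harnack inequality) I would prove by Moser iteration adapted to the degenerate weighted setting. The starting point is that $|y|^{a}\in A_2(\RR^{n+1})$ supplies two analytic tools: a weighted Sobolev embedding $\|w\|_{L^{2\kappa}(B,|y|^{a})}\le C\bigl(\int_B |y|^{a}|\nabla w|^2\bigr)^{1/2}$ for $w\in H^1_0(B,|y|^{a})$ and some $\kappa>1$ (determined by the doubling dimension of the measure $|y|^a\,dx\,dy$), and a weighted Poincar\'e inequality of the same flavor. Testing $L_a u=0$ against $u^{2\beta-1}\eta^2$, for a cutoff $\eta$ and $\beta\neq 0$, gives the Caccioppoli bound
\begin{equation*}
\int_B |y|^{a}|\nabla u^{\beta}|^2\eta^2\,dx\,dy \le C\,\beta^{2}\int_B |y|^{a} u^{2\beta}|\nabla\eta|^2\,dx\,dy.
\end{equation*}
Feeding this into the weighted Sobolev inequality and iterating on a sequence of shrinking balls produces the sup-estimate $\|u\|_{L^{\infty}(B_R)}\le C\bigl(\fint_{B_{2R}}|y|^{a}u^{p}\bigr)^{1/p}$ for any $p>0$. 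To control $\inf u$, I would use Moser's trick of testing against $\log u$, obtaining a BMO bound for $\log u$ via the Caccioppoli estimate; the John--Nirenberg lemma in the weighted $A_2$ setting then yields that some positive power of $u$ and of $u^{-1}$ are both in $L^{1}$ with comparable norms, which combined with the sup-estimate gives the two-sided Harnack bound.

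The constant $C$ is scale-invariant because $L_a$ is invariant under the dilation $u\mapsto u(x_0+R\,\cdot)$: the weight $|y|^a$ is homogeneous of degree $a$ in $y$, which rescales out of the equation $\mathrm{div}(|y|^a\nabla u)=0$. Hence every estimate can be reduced to $R=1$, so $C$ depends only on $n$ and $a$.

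For the Liouville consequence, my plan is the standard oscillation argument. Let $u$ be a bounded solution on all of $\RR^{n+1}$ and set $M:=\sup_{\RR^{n+1}}u$, $m:=\inf_{\RR^{n+1}}u$. For any $\epsilon>0$, the function $v:=M-u+\epsilon$ is a positive solution of $L_a v=0$ on $\RR^{n+1}$ with $\sup_{\RR^{n+1}}v=M-m+\epsilon$ and $\inf_{\RR^{n+1}}v=\epsilon$. Applying the Harnack inequality to $v$ on $B_{4R}(0)$ and letting $R\to\infty$ (using that $C$ is independent of $R$) gives $M-m+\epsilon\le C\,\epsilon$. Sending $\epsilon\downarrow 0$ forces $M=m$, so $u$ is constant.

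The substantive difficulty lies entirely in the Harnack half: once the weighted Sobolev embedding and the weighted John--Nirenberg lemma are established from the $A_2$ hypothesis, Moser's iteration runs almost verbatim as in the uniformly elliptic case, and the Liouville statement is then an immediate corollary of the $R$-independence of the Harnack constant. The delicate analytic input is therefore the derivation of those two weighted inequalities from the $A_2$ condition, which is precisely the content of \cite{FKS}.
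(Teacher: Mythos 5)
Your proposal is essentially correct, with two remarks. First, the paper does not actually prove the Harnack inequality itself: it cites Lemma~2.3.5 of \cite{FKS}, and the only argument it supplies is for the Liouville consequence, by applying the Harnack inequality to $u-\inf_{\RR^{n+1}}u$ in $B_{4R}(0)$ and letting $R\to\infty$. Your Moser-iteration sketch (weighted Sobolev embedding with exponent $\kappa>1$, Caccioppoli bound from testing with $u^{2\beta-1}\eta^2$, John--Nirenberg applied to $\log u$) is a faithful outline of what \cite{FKS} carries out, so it is not so much a different route as an unpacking of the citation; the genuinely hard analytic inputs you defer to \cite{FKS} are exactly what the paper also defers. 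Your Liouville argument is the mirror image of the paper's --- $M-u+\epsilon$ rather than $u-\inf u$ --- and the added $\epsilon$ is a clean way to avoid invoking a strong maximum principle in the case where the infimum is attained.

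One imprecision worth flagging: the scale-invariance argument you give for the uniformity of $C$ --- dilating $u\mapsto u(x_0+R\,\cdot)$ --- preserves the equation $\textrm{div}(|y|^a\nabla u)=0$ only when the center $x_0$ lies on $\{y=0\}$, because the weight $|y|^a$ is translation-invariant in $x$ but not in $y$. For an arbitrary ball $B_{4R}(x_0)\subset\RR^{n+1}$ the correct reason the constant is uniform in $R$ and $x_0$ is that $|y|^a$ has a single $A_2$ constant depending only on $n$ and $a$, and the Harnack constant of \cite{FKS} depends only on $n$ and that $A_2$ constant. This does not affect your Liouville deduction, which only uses balls centered at the origin and is therefore covered by the dilation argument.
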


The last statement is proved applying the previous Harnack inequality 
to $u-\inf_{\RR^{n+1}} u$ in $B_{4R}(0)$ and letting $R\to \infty$.

\begin{coro}
\label{uniqueness}
Problems \eqref{bdyFrac2} and \eqref{bdyFrac3} admit at most one
solution $u$ with $u$ bounded and continuous in $\overline{\RR^{n+1}_+}$
---up to an additive constant in the case of \eqref{bdyFrac3}.
\end{coro}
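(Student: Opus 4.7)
The strategy is to reduce uniqueness to the Liouville statement contained in Theorem~\ref{HarnackFKS}. Given two candidate solutions with the same boundary data, I set $w=u_1-u_2$; then $w$ is bounded, continuous on $\overline{\RR^{n+1}_+}$, solves $L_a w=0$ weakly in $\RR^{n+1}_+$, and satisfies the homogeneous version of the corresponding boundary condition. I would extend $w$ to a bounded continuous function $\tilde w$ on all of $\RR^{n+1}$ (with the weight extended as $|y|^a$), show that $\tilde w$ is a weak solution of $L_a\tilde w=0$ in the full space, invoke the Liouville part of Theorem~\ref{HarnackFKS} to conclude that $\tilde w$ is constant, and then read off the desired uniqueness.

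The choice of extension is dictated by the boundary condition. For \eqref{bdyFrac2}, the trace of $w$ on $\{y=0\}$ vanishes identically, so I take the odd reflection $\tilde w(x,y)=w(x,y)$ for $y\ge 0$ and $\tilde w(x,y)=-w(x,-y)$ for $y<0$; continuity of $\tilde w$ across $\{y=0\}$ follows from $w|_{y=0}=0$. For \eqref{bdyFrac3}, the homogeneous conormal condition $-y^a\partial_y w\to 0$ as $y\downarrow 0$ suggests the even reflection $\tilde w(x,y)=w(x,|y|)$, which is continuous and bounded by construction. Once $\tilde w$ is known to solve $L_a\tilde w=0$ in all of $\RR^{n+1}$, Theorem~\ref{HarnackFKS} yields that $\tilde w$ is a constant; in the odd case this constant must be $0$ because $\tilde w|_{y=0}=0$, giving $u_1\equiv u_2$, whereas in the even case $w$ itself is an arbitrary constant, which is precisely the ``up to an additive constant'' ambiguity in the statement.

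The main technical step, and the point I expect to require most care, is verifying the weak identity $\int_{\RR^{n+1}}|y|^a\nabla\tilde w\cdot\nabla\phi\,dx\,dy=0$ for every $\phi\in C^\infty_c(\RR^{n+1})$. I would split such a $\phi$ into its even and odd parts in $y$, $\phi=\phi^e+\phi^o$, and exploit that $|y|^a$ is even in $y$ together with the definite parity of each component of $\nabla\tilde w$ (tangential derivatives inherit the parity of $\tilde w$, the normal derivative the opposite parity). A short parity check shows that exactly one of the two resulting integrands is odd in $y$ and therefore vanishes by symmetry, while the other reduces to twice an integral over $\RR^{n+1}_+$ of $y^a\nabla w\cdot\nabla\psi$ against a restricted test function $\psi$ that is admissible for the original problem: in the Dirichlet case $\psi=\phi^o$ has zero trace on $\{y=0\}$, and in the Neumann case $\psi=\phi^e$ carries no trace constraint, which matches the natural space of test functions for \eqref{bdyFrac3}. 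The remaining subtlety is to justify that $\tilde w\in H^1_{\rm loc}(\RR^{n+1},|y|^a)$ so that the weak formulation makes sense and no spurious singular mass is introduced on $\{y=0\}$; this is where the vanishing trace (odd case) and the vanishing conormal flux (even case) are used.
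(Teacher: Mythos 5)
Your proposal is correct and matches the paper's argument exactly: odd reflection for the Dirichlet problem, even reflection for the Neumann problem, followed by the Liouville statement in Theorem~\ref{HarnackFKS}. You merely spell out (via the parity decomposition of test functions) the verification that the reflected function is a weak solution in all of $\RR^{n+1}$, a step the paper leaves implicit.
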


\begin{proof}
The difference $w$ of two solutions would solve the homogeneous problem.
We can then perform odd reflection for problem \eqref{bdyFrac2} and 
even reflection for problem \eqref{bdyFrac3},
to obtain a  bounded solution of $L_a w=0$ in all of
$\RR^{n+1}$. By Theorem~\ref{HarnackFKS}, $w$ is constant, which finishes the proof.

We remark that in case of the Neumann problem \eqref{bdyFrac3},
the previous Liouville and uniqueness results also follow from the Harnack
inequality in $\RR^{n+1}_+$ that we prove in 
Lemma~\ref{harnack} below. 
\end{proof}

The existence of a bounded solution for \eqref{bdyFrac2} and \eqref{bdyFrac3}
is stated below in this same section.

\subsection{A duality principle}
An important property of the operator $L_a$ is the
following duality property. It relates the Neumann problem for the operator
$L_a$ with the Dirichlet problem for $L_{-a}$, the operator with the inverse 
weight.

\begin{propo}[\cite{cafS}]\label{duality}
Assume that $h \in C(\RR^n)$, $u \in C^2(\RR^{n+1}_+)$, and $y^a \partial_y u \in 
C(\overline{\RR^{n+1}_+})$.  If $u$ is a classical solution of 
\begin{equation*}
\begin{cases}
{\rm div\,} (y^{a}\,\nabla u)=0&\text{ in } \mathbb{R}^{n+1}_+\\ 
\displaystyle{\frac{\partial u}{\partial{\nu}^{a}}}
=h &\text{ on } \partial\mathbb{R}^{n+1}_+ ,
\end{cases}
\end{equation*} 
then $w=-y^a \partial_y u$ is a classical solution of 
\begin{equation*}
\begin{cases}
{\rm div\,} (y^{-a}\,\nabla w)=0&\text{ in } \mathbb{R}^{n+1}_+\\ 
w=h &\text{ on } \partial\mathbb{R}^{n+1}_+ .
\end{cases}
\end{equation*}
\end{propo}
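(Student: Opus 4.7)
The plan is to verify the duality by direct computation: compute the derivatives of $w=-y^{a}\partial_{y} u$, use the equation satisfied by $u$ to simplify the $y$-derivative, and then check that $\mathrm{div}(y^{-a}\nabla w)=0$ collapses by commuting mixed partials.

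First, since $u \in C^{2}(\mathbb{R}^{n+1}_+)$ and $L_{a}u = 0$ in $\mathbb{R}^{n+1}_+$, one can expand the equation in the form
\begin{equation*}
y^{a}\Delta_{x} u + \partial_{y}(y^{a}\partial_{y} u)=0 \qquad\text{in } \mathbb{R}^{n+1}_+.
\end{equation*}
Standard interior regularity for the non-degenerate operator $L_{a}$ away from $\{y=0\}$ shows that $u$ is in fact smooth in $\mathbb{R}^{n+1}_+$, so all the differentiations below are legitimate classically in the open half-space.

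Next I would compute $\nabla w$ component by component. For $i=1,\dots,n$,
\begin{equation*}
\partial_{i} w = -y^{a}\,\partial_{i}\partial_{y} u, \qquad\text{hence}\qquad y^{-a}\partial_{i} w = -\partial_{i}\partial_{y} u.
\end{equation*}
For the $y$-derivative, using the equation for $u$,
\begin{equation*}
\partial_{y} w = -\partial_{y}(y^{a}\partial_{y} u) = y^{a}\Delta_{x} u, \qquad\text{hence}\qquad y^{-a}\partial_{y} w = \Delta_{x} u.
\end{equation*}
Then
\begin{equation*}
\mathrm{div}(y^{-a}\nabla w) = \sum_{i=1}^{n}\partial_{i}\bigl(-\partial_{i}\partial_{y} u\bigr)+\partial_{y}(\Delta_{x} u) = -\Delta_{x}\partial_{y} u + \partial_{y}\Delta_{x} u = 0,
\end{equation*}
by Schwarz's theorem on mixed partials, since $u$ is smooth in $\mathbb{R}^{n+1}_+$.

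For the boundary condition, the assumption $y^{a}\partial_{y} u \in C(\overline{\mathbb{R}^{n+1}_+})$ together with the Neumann condition $\partial u/\partial\nu^{a}=h$ gives exactly $w|_{y=0}=-\lim_{y\downarrow 0}y^{a}\partial_{y} u = h$, which coincides with $h\in C(\mathbb{R}^{n})$ on $\partial\mathbb{R}^{n+1}_+$. This completes the verification. The argument is essentially bookkeeping; the only mild obstacle is justifying the smoothness of $u$ (hence of $\partial_{y} u$) in $\mathbb{R}^{n+1}_+$ needed to commute mixed partials, but this is immediate from interior elliptic regularity for $L_{a}$ away from the degeneracy locus $\{y=0\}$.
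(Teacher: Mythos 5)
Your verification is correct and is exactly the standard direct computation: expand $L_a u=0$ as $y^a\Delta_x u+\partial_y(y^a\partial_y u)=0$, observe that the components of $y^{-a}\nabla w$ are $-\partial_i\partial_y u$ and $\Delta_x u$, and conclude $\mathrm{div}(y^{-a}\nabla w)=0$ by equality of mixed partials, with the boundary condition following from the continuity hypothesis on $y^a\partial_y u$. Note that this paper does not prove the proposition itself but cites Caffarelli and Silvestre for it, so there is no in-paper proof to compare against; your argument is the natural one and nothing is missing.
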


The previous duality property is related, in dimension two, to some generalized
Cauchy-Riemann conditions that we describe. Indeed, writing $L_a u=0$ in $\mathbb{R}_+^2$ as 
\begin{equation*}
\partial_x (y^a \partial_x u)+\partial_y (y^a \partial_y u)=0,
\end{equation*}  
we see that the associated conjugate function $\tilde u$ is such that 
\begin{equation*}
\begin{cases}
y^a \partial_y u= \partial_x \tilde u,\\
-y^a \partial_x u= \partial_y \tilde u,
\end{cases}
\end{equation*} 
hence satisfying generalized Cauchy-Riemann conditions. 
The function $\tilde u$ is the $a$-conjugate of $u$. Similarly, $u$ is the $-a$ conjugate of 
$\tilde u.$ Complexifying the problem by denoting $\varphi =u+i\tilde u$, it is easy to see 
that $\varphi$ satisfies 
\begin{equation}\label{beltrami}
\overline{\partial} \varphi =\nu(y) \overline{\partial \varphi}
\end{equation}
where $\overline{\partial}=\partial_x +i \partial_y$ is the standard
$\overline{\partial}$-operator and 
\begin{equation*}
\nu(y)=\frac{1+y^a}{1-y^a}. 
\end{equation*}
Equation \eqref{beltrami} is called conjugate Beltrami equation and has been extensively studied 
in the Calderon problem (see \cite{astala} and references therein). 

\subsection{Fundamental solutions}
Concerning the operator $L_a$ involved in the extension or Dirichlet problem for the
fractional Laplacian, one has the following represention
formula through a Poisson kernel.

\begin{propo} [\cite{cafS}] \label{poisson}
Given $s \in (0,1)$, let $a=1-2s \in (-1,1)$. The function 
$$P_s(x,y)=p_{n,s}\frac{y^{2s}}{\Big ( |x|^2+y^2\Big)^\frac{n+2s}{2}}
=p_{n,s}\frac{y^{1-a}}{\Big ( |x|^2+y^2\Big)^\frac{n+1-a}{2}}$$
is a solution of 
\begin{equation}\label{POI}
\begin{cases}
{\rm div\,} \, (y^{a}\,\nabla P_s)=0&\text{ in } \mathbb{R}^{n+1}_+\\ 
P_s= \delta_0&\text{ on } \partial\mathbb{R}^{n+1}_+=\RR^n ,
\end{cases}
\end{equation} 
where $\delta_0$ is the delta distribution at the origin, and
$p_{n,s}$ is a positive constant depending only on $n$ and~$s$
chosen such that, for all $y>0$, 
$$\int_{\RR^{n}} P_s(x,y)\,dx=1. $$
\end{propo}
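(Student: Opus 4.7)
The plan has three steps, matching the three assertions in the proposition. \textbf{Step 1:} verify the PDE $L_a P_s=0$ in $\mathbb{R}^{n+1}_+$ by direct computation. Away from $y=0$ the weight is smooth, so I can divide by $y^a$ and work with the equivalent equation
\begin{equation*}
\Delta P_s + \frac{a}{y}\,\partial_y P_s = 0.
\end{equation*}
Writing $P_s = p_{n,s}\, y^{\alpha}\,\rho^{-\beta}$ with $\alpha=2s=1-a$, $\beta=(n+2s)/2$, and $\rho=|x|^2+y^2$, I compute $\Delta_x P_s$, $\partial_y P_s$, $\partial_y^2 P_s$ in terms of $y^{\alpha-2}\rho^{-\beta}$, $y^{\alpha}\rho^{-\beta-1}$ and $y^{\alpha+2}\rho^{-\beta-2}$. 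The term $4\beta(\beta+1)\rho^{-\beta-2}(|x|^2+y^2)$ collapses one power of $\rho$, leaving only the two monomial types $y^{\alpha}\rho^{-\beta-1}$ and $y^{\alpha-2}\rho^{-\beta}$. Collecting coefficients, the equation $\Delta P_s+(a/y)\partial_y P_s=0$ reduces to the two identities $1-\alpha-a=0$ and $a-1+\alpha=0$, both of which are simply $a=1-2s$. So $L_a P_s=0$ holds identically.

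\textbf{Step 2:} normalize the mass. For fixed $y>0$, the substitution $x=yz$ gives
\begin{equation*}
\int_{\mathbb{R}^n} P_s(x,y)\,dx = p_{n,s}\,y^{2s}\int_{\mathbb{R}^n}\frac{dx}{(|x|^2+y^2)^{(n+2s)/2}} = p_{n,s}\int_{\mathbb{R}^n}\frac{dz}{(1+|z|^2)^{(n+2s)/2}},
\end{equation*}
which is finite since $n+2s>n$, and is independent of $y$. Choosing $p_{n,s}$ to be the reciprocal of this integral enforces $\int_{\mathbb{R}^n} P_s(\cdot,y)\,dx=1$ for every $y>0$, which is the third bullet.

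\textbf{Step 3:} interpret the boundary condition $P_s(\cdot,y)\to\delta_0$. Combining the previous two steps, the family $\{P_s(\cdot,y)\}_{y>0}$ consists of nonnegative functions of unit mass. For any $\delta>0$ and $|x|\geq\delta$ one has $P_s(x,y)\leq p_{n,s}\, y^{2s}\,\delta^{-n-2s}\to 0$ as $y\downarrow 0$, so the mass concentrates at the origin. Hence for any $\varphi\in C_b(\mathbb{R}^n)$,
\begin{equation*}
\int_{\mathbb{R}^n} P_s(x,y)\,\varphi(x)\,dx \;\longrightarrow\; \varphi(0) \qquad \text{as } y\downarrow 0,
\end{equation*}
which is exactly the assertion $P_s(\cdot,y)\to\delta_0$ in $\mathcal{D}'(\mathbb{R}^n)$. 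Together with Step 1 this gives the full weak statement of~\eqref{POI}.

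The only nontrivial part is the PDE verification in Step 1, and the potential pitfall there is bookkeeping of the exponents; writing everything in the variables $\alpha$ and $\beta$ and using $2\beta=n+\alpha$ to simplify the coefficient $2(\beta+1)-(n+2\alpha+1)=1-\alpha$ is what makes the two remaining obstructions collapse to the single identity $a+2s=1$. Steps 2 and 3 are entirely routine scaling and approximate identity arguments.
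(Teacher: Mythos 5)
The paper itself does not prove Proposition \ref{poisson}; it is stated as an imported result with the citation to Caffarelli--Silvestre, so there is no in-paper argument to compare against. Your self-contained verification is correct. Step 1 is the right computation: with $\alpha=2s$, $\beta=(n+2s)/2$, $\rho=|x|^2+y^2$, the cross term $4\beta(\beta+1)\,y^\alpha(|x|^2+y^2)\rho^{-\beta-2}$ collapses to $4\beta(\beta+1)\,y^\alpha\rho^{-\beta-1}$, and the remaining coefficients are $2\beta(1-\alpha-a)$ on $y^\alpha\rho^{-\beta-1}$ and $\alpha(\alpha-1+a)$ on $y^{\alpha-2}\rho^{-\beta}$, both of which vanish precisely when $a=1-2s$; Step 2 is the standard scaling $x=yz$; Step 3 is the approximate-identity argument (note the pointwise bound $P_s(x,y)\le p_{n,s}\,y^{2s}\delta^{-n-2s}$ on $|x|\ge\delta$ is by itself not integrable over the unbounded exterior region, so to conclude that the mass outside $|x|\ge\delta$ vanishes it is cleaner to invoke the scaling form $P_s(x,y)=y^{-n}H_s(x/y)$ with $H_s\in L^1$ and dominated convergence, which is exactly how the paper treats this kernel in the proof of Lemma~\ref{regNL}). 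In short, you have supplied a complete elementary proof of a statement the paper chooses to cite, and the argument is sound.
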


\begin{rema}\label{equipb}
{\rm
As a consequence of Corollary~\ref{uniqueness} above,
we have that for $v\in (C \cap L^\infty) (\RR^n)$,
the convolution in the $x$-variables
$$
u(\cdot,y)=P_s(\cdot, y)  * v
$$
is the unique solution of \eqref{bdyFrac2} in $(C \cap L^\infty) (\overline{\RR^{n+1}_+})$. 

As a consequence,
for $v$ a bounded $C^2_\textrm{loc}(\RR^n)$ function, $v$ 
is a solution of \eqref{problem} if and only if 
$$u(\cdot,y)=P_s(\cdot,y) * v$$
is a solution of \eqref{extAlpha} (with $f$ replaced by $(1+a) d_s^{-1} f
= 2(1-s)d_s^{-1} f$) whose trace on $\partial \RR^{n+1}_+$ is $v$.
Recall that $d_s$ is the constant from \eqref{cttNeumann}. 
}
\end{rema}

{From} the previous duality principle (Proposition~\ref{duality})
and the knowledge of the 
fundamental solution (the Poisson kernel) of the Dirichlet
problem \eqref{bdyFrac2}, we can find the fundamental solution
of the fractional Laplacian or, equivalently, the fundamental solution  
of the Neumann problem \eqref{bdyFrac3}.

\begin{propo} [\cite{cafS}] \label{fundNeum}
Given $s \in (0,1)$, let $a=1-2s \in (-1,1)$. The function 
$$
\Gamma_s(x,y)=e_{n,s} |(x,y)|^{2s-n}=e_{n,s} |(x,y)|^{1-a-n}
$$
is a solution of 
\begin{equation}\label{PON}
\begin{cases}
{\rm div\,} \, (y^{a}\,\nabla \Gamma_s)=0&\text{ in } \mathbb{R}^{n+1}_+\\ 
-y^a\partial_y \Gamma_s= \delta_0&\text{ on } \partial\mathbb{R}^{n+1}_+=\RR^n ,
\end{cases}
\end{equation} 
where $\delta_0$ is the delta distribution at the origin, and
$e_{n,s}$ is a positive constant depending only on $n$ and~$s$.
\end{propo}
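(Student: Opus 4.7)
The plan is to verify the two conditions in \eqref{PON} by direct computation, using Proposition~\ref{poisson} (applied with fractional power $1-s$ in place of $s$) together with the duality of Proposition~\ref{duality} to handle the Neumann boundary condition.

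First, I would show that $L_a \Gamma_s = 0$ in $\mathbb{R}^{n+1}_+$. Writing $r = |(x,y)|$, note that for any radial profile $\phi(r)$ on $\mathbb{R}^{n+1}_+$ one has the identity
\[
L_a \phi = y^a\Bigl[\phi''(r) + \frac{n+a}{r}\phi'(r)\Bigr],
\]
obtained by combining $\Delta\phi = \phi'' + (n/r)\phi'$ with the contribution $ay^{a-1}\partial_y\phi = (a/r)y^a\phi'$ coming from the weight. For $\phi(r) = r^{2s-n}$, the bracket collapses to $(2s-n)r^{2s-n-2}[(2s-n-1)+(n+a)] = (2s-n)r^{2s-n-2}[2s-1+a]$, which vanishes since $a = 1-2s$.

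Next, I would compute $-y^a \partial_y \Gamma_s$ and identify it, up to a constant, with the Poisson kernel for the dual weight $y^{-a}$. Direct differentiation gives
\[
-y^a \partial_y \Gamma_s(x,y) = e_{n,s}(n-2s)\frac{y^{1+a}}{(|x|^2+y^2)^{(n+1+a)/2}}.
\]
Since $a = 1-2s$ forces $1+a = 2(1-s)$ and $n+1+a = n+2(1-s)$, this function is a scalar multiple of the Poisson kernel $P_{1-s}$ provided by Proposition~\ref{poisson} applied with fraction $1-s$. This is coherent with the duality principle of Proposition~\ref{duality}: since $L_a \Gamma_s = 0$, the function $w = -y^a\partial_y\Gamma_s$ automatically solves $L_{-a}w = 0$ in $\mathbb{R}^{n+1}_+$, and by the uniqueness statement of Corollary~\ref{uniqueness} (applied to $L_{-a}$) it must agree with $P_{1-s}$ up to a constant factor.

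To finish, note that $P_{1-s}(\cdot,y)$ has unit mass and concentrates at the origin as $y\downarrow 0$, hence is an approximation of the identity; therefore $-y^a\partial_y\Gamma_s(\cdot,y)$ converges in the distributional sense to a multiple of $\delta_0$. Choosing $e_{n,s}$ so that $e_{n,s}(n-2s) = p_{n,1-s}$ gives the required normalization $-y^a\partial_y\Gamma_s \to \delta_0$ in \eqref{PON}. No step is particularly delicate; the only subtlety worth mentioning is the sign of $e_{n,s}$, which is positive precisely when $n>2s$, in which case $\Gamma_s$ decays at infinity and is the genuine fundamental solution. Conceptually the proof simply packages the fact that the conormal derivative of an $a$-harmonic radial power is itself a Poisson-type kernel, with the dual weight and the conjugate fractional power $1-s$.
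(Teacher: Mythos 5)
Your proof is correct and follows the route the paper itself indicates just before the statement (the paper cites \cite{cafS} and only remarks that the result follows from the duality principle, Proposition~\ref{duality}, together with the Poisson kernel of Proposition~\ref{poisson}; it does not write out the verification). Your direct computation of $L_a\Gamma_s$ via the radial identity and of $-y^a\partial_y\Gamma_s$ does the work; the appeal to Corollary~\ref{uniqueness} is only heuristic here (that corollary assumes boundedness and continuity up to the closure, which $-y^a\partial_y\Gamma_s$ does not satisfy near the origin), but since you already identified $-y^a\partial_y\Gamma_s$ with a multiple of $P_{1-s}$ by explicit calculation, nothing hinges on it.

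One point you raise is worth emphasizing rather than downplaying: the normalization $e_{n,s}(n-2s)=p_{n,1-s}$ forces $e_{n,s}>0$ only when $n>2s$. For $n=1$ and $s\ge 1/2$ the stated power $|(x,y)|^{2s-n}$ either is constant ($s=1/2$) or grows at infinity with $e_{n,s}$ of the wrong sign, so the proposition as written (with a \emph{positive} $e_{n,s}$) is really a statement for $n>2s$. This is a genuine, if minor, imprecision in the proposition that your computation exposes; the paper simply cites \cite{cafS}, where the restriction $n>2s$ is the natural hypothesis.
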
 

As a consequence, $\Gamma_s(x,0)=e_{n,s} |x|^{2s-n}$ is, up to a multiplicative constant,
the fundamental solution of $(-\Delta)^s$ in $\RR^n$.

\begin{rema}\label{existNeum}
{\rm
As a consequence of Corollary~\ref{uniqueness} above,
we have that for  $h\in C_c  (\RR^n)$ ($h$ continuous with compact support),
the convolution in the $x$-variables
$$
u(\cdot,y)=\Gamma_s(\cdot, y)  * h
$$
is the unique (up to an additive constant) 
solution of \eqref{bdyFrac3}  in $(C \cap L^\infty) (\overline{\RR^{n+1}_+})$.
Thus, its trace
$$
v= |x|^{2s-n} * h
$$
is up to a multiplicative
constant, the unique (up to an additive constant) continuous and bounded solution of 
$$(-\Delta)^s v=h \qquad \text{in }\RR^n.$$
}
\end{rema}

\begin{rema}\label{constant}
{\rm
The normalizing constant $C_{n,s}$ in \eqref{fracpv} is given by 
$$C_{n,s}=\pi^{-\frac{n}{2}}2^{2s}\frac{\Gamma(\frac{n+2s}{2})}{-\Gamma(-s)}=\pi^{-\frac{n}{2}}2^{2s}
\frac{\Gamma(\frac{n+2s}{2})}{\Gamma(1-s)}s=
\pi^{-\frac{n}{2}}2^{2s}\frac{\Gamma(\frac{n+2s}{2})}{\Gamma(2-s)}s(1-s).$$
In particular, up to positive multiplicative constants, 
$C_{n,s}$ behaves as $s$ and $1-s$ for $s \downarrow 0$ and $s \uparrow 1$, respectively.  

Let us also make some comments on the constant $d_s$ in the
Caffarelli-Silvestre extension problem ---see Theorem~\ref{realization}. Its value is given by 
$$d_s=2^{2s-1}\frac{\Gamma(s)}{\Gamma(1-s)}.$$
The fact that this constant does not depend on $n$ is already proved in section~3.2 of \cite{cafS}. 
Its precise value appears in several papers; see e.g. \cite{stinga,frank}. Using that 
$s\Gamma (s)=\Gamma(s+1)$ and $(1-s)\Gamma (1-s)=\Gamma(2-s)$, we deduce, respectively, that 
\begin{equation}\label{limctts}
\frac{d_s}{(2s)^{-1}} \rightarrow 1 \,\,\,\text{as } s\downarrow 0 \,\,\,\,\,\,\,\mbox{and}\,\,\,\, 
\frac{d_s}{2(1-s)} \rightarrow 1 \,\,\, \text{as } s\uparrow 1.
\end{equation}

When $u$ solves problem \eqref{extAlpha}, its boundary condition
$(1+a)\partial_{\nu_a}u= 2(1-s) \partial_{\nu_a}u =f(u)$ gives that the trace
of $u$ solves
$$
(-\Delta)^s \{u(,\cdot,0)\} = \frac{d_s}{2(1-s)} f(u(,\cdot,0)).
$$
}
\end{rema}

\section{Preliminary results on elliptic problems involving
$L_a$: Schauder estimates, maximum principles, and a Liouville theorem}

This section is devoted to the proof of several general results
concerning problem \eqref{extAlpha}. The following definition provides the concept of 
weak solution for \eqref{extAlpha}. More generally, we consider the problem 
\begin{equation}\label{temp}
\begin{cases}
\textrm{div\,} (y^a \nabla u)=0&\text{in $B_R^+$}\\
-y^a u_y=h&\text{on $\Gamma_R^0$,}
\end{cases} 
\end{equation}
where we have used the notation introduced in the
beginning of section~2.

\begin{defi}
{\rm
Given $R>0$ and a function $h \in L^1(\Gamma^0_R)$, we say that $u$ is a 
weak solution of \eqref{temp} if 
$$y^a |\nabla u|^2 \in L^1(B_R^+)$$
and 
\begin{equation}
\int_{B_R^+} y^{a} \nabla u \cdot \nabla \xi -\int_{\Gamma^0_R} h
\xi =0
\end{equation}
for all $\xi \in C^1(\overline{B_R^+})$ such that $\xi\equiv 0$ on $\Gamma^+_R$.}
\end{defi}

\begin{rema}\label{MPweak}
{\rm
The (weak) maximum principle holds for weak solutions of \eqref{temp}.
More generally, if $u$ weakly solves
\begin{equation}\label{pbMPweak}
\begin{cases}
-\textrm{div\,} (y^a \nabla u)\geq 0&\text{in $B_R^+$}\\
-y^a u_y\geq 0 &\text{on $\Gamma_R^0$}\\
u\geq 0 &\text{on $\Gamma_R^+$,}
\end{cases}
\end{equation}
then $u\geq 0$ in $B_R^+$. This is proved simply multiplying the weak
formulation by the negative part $u^-$ of $u$. 

In addition, one has
the strong maximum principle: either $u\equiv 0$ or 
$u>0$ in $ B_R^+\cup \Gamma_R^0$. That $u$ cannot vanish at an 
interior point follows from the classical strong maximum principle
for strictly elliptic operators. That $u$ cannot vanish at a
point in $\Gamma_R^0$ follows from the Hopf principle 
that we establish below (see Proposition~\ref{hopf}) or
by the strong maximum principle of \cite{FKS}.

Note that the same weak and strong maximum principles (and proofs) hold
in other bounded domains of $\RR^{n+1}_+$ different than $B_R^+$. It also holds for 
the Dirichlet problem in $B_R^+$, i.e., replacing
the Neumann condition in \eqref{pbMPweak} by $u\geq 0$ on $\Gamma_R^0$.
}
\end{rema} 

\subsection{Schauder estimates}
In the following, we prove several estimates for solutions
of \eqref{problem} and \eqref{extAlpha}, as well as for solutions of the Neumann problem  
\eqref{temp} in $B_R^+.$

\begin{rema}
{\rm
Note that the function $u(x,y)=y^{1-a}$ solves $L_a
u=0$. Therefore, one cannot expect the H\"older regularity in $y$ to be higher than 
than $C^{\min(1,1-a)}$ up to the boundary $\left \{ y=0\right \}$. Thus, for $s < 1/2$ 
(i.e. $a >0$), there are solutions of $L_a u=0$ vanishing on $\left \{ y=0\right \}$ which are 
not $C^1$ up to $\left \{ y=0\right \}$. 
}
\end{rema}

The next lemmas provide several regularity results. We start with estimates on the 
nonlocal equation. 

\begin{lem}\label{regNL}
Let $f$ be a $C^{1,\gamma}(\RR)$ function with $\gamma >\max(0,1-2s)$. 
Then, any bounded solution of 
$$(-\Delta)^s v =f(v)\,\,\,\mbox{in }\RR^n$$
is $C^{2,\beta}(\RR^n)$ for some $0 <\beta < 1$ depending only on $s$ and $\gamma$.

Furthermore, given $s_0>1/2$ there exists $0 <\beta < 1$ depending only on $n$,
$s_0$, and $\gamma$ ---and hence independent of $s$--- such that for every $s>s_0$, 
$$\|v\|_{C^{2,\beta}(\RR^n)} \leq C $$
for some constant $C$ depending only on $n$, $s_0$, $\|f\|_{C^{1,\gamma}}$,
and $\|v\|_{L^{\infty}(\RR^n)}$ ---and hence independent of $s \in (s_0,1)$.

In addition, the function defined by $u=P_s \, * \, v$ (where $P_s$ is the Poisson kernel 
in Proposition 3.7) satisfies for every $s>s_0$,
$$\|u\|_{C^{\beta}(\overline{\RR^{n+1}_+})} + \|\nabla_x u\|_{C^{\beta}(\overline{\RR^{n+1}_+})}
+\|D^2_x u\|_{C^{\beta}(\overline{\RR^{n+1}_+})} \leq C $$
for some constant $C$ independent of $s\in (s_0,1)$, indeed 
depending only on the same quantities as the previous one.
\end{lem}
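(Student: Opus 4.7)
The plan is a standard bootstrap in H\"older classes using the Schauder-type estimate of Silvestre~\cite{SilCPAM} for the fractional Laplacian: if $(-\Delta)^s w = g$ in $\RR^n$ with $w\in L^\infty(\RR^n)$ and $g\in C^{0,\alpha}(\RR^n)$ (and $\alpha+2s\notin\NN$), then $w\in C^{\alpha+2s}(\RR^n)$ with $\|w\|_{C^{\alpha+2s}}$ controlled by $\|w\|_{L^\infty}$ and $\|g\|_{C^\alpha}$. Starting from $v,f(v)\in L^\infty$, an initial application of Silvestre's $L^\infty$-to-H\"older estimate yields $v\in C^{0,\alpha_0}$ for some $\alpha_0>0$. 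Since $f\in C^{1,\gamma}$, one has $f(v)\in C^{0,\alpha_0}$, and the Schauder estimate gains $2s$ H\"older indices. Because $2s>1$ whenever $s>s_0>1/2$, two iterations suffice: after the first step $v\in C^{1,\alpha_1}$ with $\alpha_1=\alpha_0+2s-1$, whence by the chain rule $f(v)\in C^{1,\min(\alpha_1,\gamma)}$, and a second application gives $v\in C^{2,\beta}$ with $\beta=\min(\alpha_1,\gamma)+2s-1$. The hypothesis $\gamma>1-2s$ is precisely what ensures $\beta>0$.

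The uniformity in $s\in(s_0,1)$ is more delicate, and I expect this to be the main obstacle. As $s\uparrow 1$ the weight $|y|^a$ with $a=1-2s$ approaches $|y|^{-1}$, where the $A_2$-constant diverges; consequently the estimates of Fabes--Jerison--Kenig--Serapioni (Theorems~\ref{HolderFKS}--\ref{HarnackFKS}) and any Schauder bounds derived from them via the extension degenerate. I would deal with this by a blow-up/compactness argument: if the $C^{2,\beta}$ bound failed uniformly, one could find a sequence $s_k\in(s_0,1)$ and solutions $v_k$ with blowing-up $C^{2,\beta}$ seminorms. Rescaling near points where the seminorm is almost attained and passing to a subsequence, one extracts a limit that either solves $(-\Delta)^{s^*}V=f(V)$ in $\RR^n$ for some $s^*\in[s_0,1)$ ---contradicting the Schauder estimate at fixed $s^*$--- or, in the case $s_k\uparrow 1$, solves $-\Delta V=c\,f(V)$ in $\RR^n$ for some $c>0$. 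For the latter one uses the asymptotics $d_s\sim 2(1-s)$ and $C_{n,s}\sim s(1-s)$ from Remark~\ref{constant} to pass to the limit $(-\Delta)^{s_k}\to c(-\Delta)$, after which the classical Schauder theory yields the contradiction.

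For the extension $u=P_s*v$, the $C^\beta$ bounds on $u$, $\nabla_x u$ and $D_x^2 u$ up to $\partial\RR^{n+1}_+$ follow from two elementary observations. First, the identity $D_x^k u(\cdot,y)=P_s(\cdot,y)*D_x^k v$ together with $\|P_s(\cdot,y)\|_{L^1(\RR^n)}=1$ (uniformly in $y>0$ and in $s$, by the normalization in Proposition~\ref{poisson}) transfers the already-established uniform $C^{2,\beta}$ bound on $v$ to the tangential derivatives of $u$. Second, H\"older continuity in $y$ up to $y=0$ comes from
\[
|D_x^k u(x,y)-D_x^k u(x,0)| \le \int_{\RR^n}|D_x^k v(x-\bar x)-D_x^k v(x)|\,P_s(\bar x,y)\,d\bar x \le C\,\|D_x^k v\|_{C^\beta}\,y^\beta,
\]
obtained via the change of variables $\bar x=y\xi$ and the homogeneity $P_s(y\xi,y)=y^{-n}P_s(\xi,1)$, together with the uniform finiteness of $\int_{\RR^n}|\xi|^\beta P_s(\xi,1)\,d\xi$ for $\beta<2s_0$ (whose value depends only on $n$, $s_0$ and $\beta$, using that $p_{n,s}$ is bounded above on $(s_0,1)$). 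Combining the tangential and normal estimates gives the desired joint H\"older continuity up to the boundary with constants independent of $s\in(s_0,1)$.
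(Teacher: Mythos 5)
Your bootstrap for the fixed-$s$ regularity and your treatment of the extension $u=P_s*v$ via the scaling $P_s(x,y)=y^{-n}H_s(x/y)$ match the paper's proof closely (minor quibble: for the first, qualitative part of the lemma the paper also handles $s\le 1/2$ by iterating the gain of $2s$ derivatives finitely many times before reaching $C^{1,\cdot}$, whereas you only discuss $s>1/2$). The genuine divergence—and the gap—is in the uniformity in $s$ near $1$.

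You correctly identify that the weight $|y|^{1-2s}$ loses its $A_2$-constant as $s\uparrow 1$, and you propose a blow-up/compactness argument. But as sketched this does not close. To contradict blow-up of the $C^{2,\beta}$ seminorm you cannot simply translate and pass to a subsequential limit: the $v_k$ are bounded in $L^\infty$, not in $C^{2,\beta}$, so there is no compactness at the level that is assumed to fail. The standard fix (rescale by the distance $\rho_k$ between points nearly attaining the seminorm, subtract a second-order polynomial, normalize) is delicate here because $(-\Delta)^{s_k}$ is nonlocal: after rescaling, the operator still sees the global (unrescaled) values of $v_k$, and subtracting a polynomial is incompatible with the boundedness you need to make sense of $(-\Delta)^{s_k}$ on the rescaled function. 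You would also have to track the constants $C_{n,s_k}$ and $d_{s_k}$ through the rescaling in the case $s_k\uparrow 1$, and the heuristic ``$(-\Delta)^{s_k}\to c(-\Delta)$'' is not a statement that applies to the rescaled, polynomial-subtracted sequence. None of these steps are obviously fatal, but none of them are routine either, and you have not addressed them.

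The paper avoids all of this with a direct, local reformulation: rewrite $(-\Delta)^s v=f(v)$ as
\[
-\Delta v = (-\Delta)^{1-s} f(v) \qquad\text{in }\RR^n,
\]
and observe (from a careful reading of Proposition~2.5 of \cite{SilCPAM}) that for $s>s_0>1/2$ the low-order operator $(-\Delta)^{1-s}$ maps $C^\alpha(\RR^n)$ into $C^{\alpha-2+2s}(\RR^n)$ with a bound \emph{independent of $s$}, because the normalizing constant $C_{n,1-s}$ stays bounded for $1-s\in(0,1-s_0)$ (Remark~\ref{constant}). Then the classical $C^{2,\beta}$ Schauder estimate for the Poisson equation, applied at a \emph{fixed} exponent $\beta=\alpha-2+2s_0$, gives the uniform bound in one stroke. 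This is strictly simpler than the compactness route and sidesteps the nonlocal rescaling difficulties. I recommend replacing your blow-up sketch with this argument.
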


\begin{proof}
Since $v$ is bounded, $f(v)$ is also bounded. Applying Proposition 2.9 in \cite{SilCPAM}, we have 
\begin{itemize}
\item If $s \leq 1/2$, then for any $\alpha < 2s$, $v \in C^{0,\alpha}(\RR^n)$. 
\item If $s >1/2$, then for any $\alpha < 2s-1$, $v \in C^{1,\alpha}(\RR^n)$. 
\end{itemize}
This implies in particular that $f(v)$ is 
$C^\alpha(\RR^n)$. Applying now
Proposition 2.8 in \cite{SilCPAM}, we have 
\begin{itemize}
\item If $\alpha +2s \leq 1$, then $v \in C^{0,\alpha+2s}(\RR^n)$. 
\item If $\alpha +2s >1$, then $v \in C^{1,\alpha+2s-1}(\RR^n)$. 
\end{itemize}
Therefore, iterating the procedure a finite number of times, one gets that $v \in C^{1,\sigma}(\RR^n)$
for some $\sigma \in(0,1)$ depending only on $s$. Indeed, if $\alpha +2s >1$, then one can take 
$\sigma=\alpha+2s-1.$ On the other hand if $\alpha +2s \leq 1$, we have that $f(u)$ is $C^{0,\alpha +2s}$.
As a consequence, one gets that $u$ is $C^{0,\alpha +4s}$. Hence iterating a finite number of times, 
we will end up with $\alpha + k2s >1$ for some integer $k$.  This gives the $C^{1,\sigma}$ regularity. 

We now differentiate the equation to obtain
$$(-\Delta)^s v_{x_i}=f'(v)v_{x_i}\,\,\,\mbox{in }\RR^n$$
for $i=1,...,n$, with  $v_{x_i}$ and $f'(v)$ belonging to  
$C^{0,\sigma}(\RR^n)$ provided we take $\sigma < \gamma$. Therefore, applying Proposition 2.8 of 
\cite{SilCPAM} we obtain 
that $v_{x_i} \in C^{0,\sigma +2s}(\RR^n)$. 
We iterate this procedure a finite number of times (as long as
the H\"older exponent is smaller than $\gamma$). Now, since by assumption
$\gamma +2s >1$, we finally arrive at  $v_{x_i} \in C^{1,\beta}(\RR^n)$,
and thus $v \in C^{2,\beta}(\RR^n)$ for some $\beta >0$ depending only on $s$ and $\gamma$.

For the second point of the lemma, we write the nonlocal equation as 
$$-\Delta v= (-\Delta)^{1-s}f(v)\,\,\,\,\mbox{in}\,\,\RR^n. $$
A careful look at the proof of Proposition 2.5 in \cite{SilCPAM} shows the following. 
Given $s_0 >1/2$, if $0 < 2-2s_0<\alpha <1$, the operator  $(-\Delta)^{1-s}$ maps 
$C^\alpha(\RR^n)$ into $C^{\alpha-2+2s}(\RR^n)$ continuously with a constant independent of $s$. 
Here we use that the constant $C_{n,s}$ in \eqref{fracpv} is uniformly bounded as 
$s \uparrow 1$ ---see Remark \ref{constant}. As a consequence, applying $C^{2,\beta}$ estimates for 
Poisson equation, we deduce that $v \in C^{2,\alpha-2+2s_0}(\RR^n)=C^{2,\beta}(\RR^n)$ and a 
$C^{2,\beta}$ estimate with a constant independent of $s$ ---indeed depending on the quantities 
in the statement of the lemma.

We now come to the last point of the lemma. 
Let $v \in ( L^\infty \cap C^\beta) (\RR^n)$ for some $\beta \in (0,\min(1,2s_0)]$ ---here we allow 
$s_0 \in (0,1)$. 
We claim that there exists a constant $C$ depending on $n$ and $s_0$, independent of $s>s_0$, such 
that the function $u$ defined by 
$$u(\cdot,y)=P_s(\cdot,y) \,* \, v$$
is $( L^\infty \cap C^\beta) (\RR^{n+1}_+)$ with the estimate 
$$\|u\|_{C^\beta(\overline{\RR^{n+1}_+})} \leq C \|v\|_{C^\beta(\RR^n)}. $$

Applying this fact to $v$, $v_{x_i}$ and $v_{x_ix_j}$, we conclude the statement of the lemma. 
To prove the claim, the Poisson kernel $P_s$ writes 
$$P_s(x,y)=\frac{1}{y^n} H_s(\frac{x}{y})$$
where 
$$H_s(\xi)=p_{n,s} \frac{1}{(1+|\xi|^2)^{\frac{n+2s}{2}}}.$$
The constant $p_{n,s}$ is such that $\int_{\RR^n} H_s(\xi)\,d\xi=1$ and  
therefore it is bounded uniformly in $s$, for $s >s_0$. We have 
$$u(x,y)=\int_{\RR^n} v(x-y \xi)H_s(\xi) \,d\xi $$
and then 
$$|u(x_1,y_1)-u(x_2,y_2)| =\left |\int_{\RR^n} \Big \{ v(x_1-y_1 \xi)-v(x_2-y_2 \xi) \Big \} 
H_s(\xi) \,d\xi \right |$$
$$\leq C \Big \{ |x_1-x_2 |^\beta +|y_1-y_2 |^\beta \int_{\RR^n} H_s(\xi) 
|\xi |^\beta d\xi\Big \}\|v\|_{C^\beta(\RR^n)}.$$
Thus  we deduce  the desired result taking $\beta <2s_0 <2s. $
\end{proof}

The next lemma provides estimates for solutions of the
Neumann problem in a half-ball.
 
\begin{lem}
\label{regularity1} 
Let $a \in (-1,1)$ and $R>0$. Let $\varphi \in C^\sigma (\Gamma^0_{2R})$ for some $\sigma \in (0,1)$ and 
$u \in L^\infty(B^+_{2R}) \cap H^1(B^+_{2R},y^a)$ be a weak solution of
\begin{equation*}
\label{problemBR}
\begin{cases}
L_a u=0&\text{ in } B^+_{2R}\subset\RR^{n+1}_+\\ 
\displaystyle{\frac{\partial u}{\partial\nu^a}}
=\varphi&\text{ on } \Gamma^0_{2R}.
\end{cases}
\end{equation*}

Then, there exists  $\beta \in (0,1)$ depending only on $n$, $a$, and $\sigma$, 
such that $u \in C^{0,\beta}(\overline{B_R^+})$ and 
$y^a u_y \in C^{0,\beta}(\overline{B_R^+})$. 

Furthermore, there exist constants $C^1_R$ and $C^2_R$ depending only on $n$, $a$, 
$R$, $\|u\|_{L^\infty(B_{2R}^+)}$ and also on $\|\varphi \|_{L^\infty(\Gamma^0_{2R})}$ (for $C^1_R$) 
and $\|\varphi \|_{C^\sigma(\Gamma^0_{2R})}$ (for $C^2_R)$ , such that 
$$\|u\|_{C^{0,\beta}(\overline{B_R^+})} \leq C^1_R$$
and 

$$\|y^a u_y\|_{C^{0,\beta}(\overline{B_R^+})} \leq C^2_R.$$
\end{lem}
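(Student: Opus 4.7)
I would prove the two $C^{0,\beta}$ statements separately, each by (i) subtracting off a particular solution that absorbs the boundary datum, (ii) extending by reflection across $\{y=0\}$ to a full ball in $\RR^{n+1}$, and (iii) invoking the Hölder regularity theorem (Theorem~\ref{HolderFKS}). Fix once and for all a cutoff $\eta\in C_c^\infty(\Gamma^0_{2R})$ with $\eta\equiv 1$ on $\Gamma^0_{3R/2}$, and set $\tilde\varphi:=\eta\varphi$, which is then a compactly supported element of $C^\sigma(\RR^n)$.

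\textbf{Regularity of $u$.} Using the Neumann fundamental solution of Proposition~\ref{fundNeum}, define the convolution in $x$, $w(\cdot,y):=\Gamma_s(\cdot,y)\ast\tilde\varphi$. Then $L_a w=0$ in $\RR^{n+1}_+$ and $-y^aw_y=\tilde\varphi=\varphi$ on $\Gamma^0_{3R/2}$. From the explicit formula $\Gamma_s(x,y)=e_{n,s}|(x,y)|^{2s-n}$, a direct analysis of the kernel gives $w\in C^{0,\beta}(\overline{\RR^{n+1}_+})$ with norm controlled by $\|\varphi\|_{C^\sigma}$. Then $\tilde u:=u-w$ is a weak solution of $L_a\tilde u=0$ with homogeneous Neumann data on $\Gamma^0_{3R/2}$, and its even reflection $\tilde u^\ast(x,y):=\tilde u(x,|y|)$ is a weak solution of $L_a\tilde u^\ast=0$ in the full ball $B_{3R/2}\subset\RR^{n+1}$. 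One verifies this by testing against an arbitrary smooth $\xi$, splitting it into its even and odd parts in $y$: the odd part vanishes by symmetry, while the even part, restricted to the upper halfball, is an admissible test function in the weak Neumann formulation and produces no boundary term since the Neumann datum is zero. Applying Theorem~\ref{HolderFKS} to $\tilde u^\ast$ gives $\tilde u^\ast\in C^{0,\beta}(\overline{B_R})$ with $L^\infty$-norm controlled by $\|u\|_{L^\infty(B^+_{2R})}+\|w\|_{L^\infty}\lesssim \|u\|_{L^\infty(B^+_{2R})}+\|\varphi\|_{L^\infty(\Gamma^0_{2R})}$, and hence $u=\tilde u+w\in C^{0,\beta}(\overline{B_R^+})$ with the advertised constant $C^1_R$.

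\textbf{Regularity of $y^a u_y$.} Here I would exploit the duality principle (Proposition~\ref{duality}). Interior Schauder regularity on compact subsets of $B^+_{2R}\cap\{y>0\}$, where $L_a$ is uniformly elliptic with smooth coefficients, yields $u\in C^\infty$ there, so $W:=-y^a u_y$ is a classical solution of $L_{-a}W=0$ in $B^+_{2R}$. Moreover the first step (combined with the boundary condition) gives that $W$ extends continuously to $\Gamma^0_{2R}$ with trace $W=\varphi$. Setting $s':=(1+a)/2$, so that $-a=1-2s'$, define $\bar W(\cdot,y):=P_{s'}(\cdot,y)\ast\tilde\varphi$ with $P_{s'}$ the Poisson kernel of Proposition~\ref{poisson} applied to the operator $L_{-a}$. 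Then $\bar W$ solves $L_{-a}\bar W=0$ in $\RR^{n+1}_+$ with $\bar W=\tilde\varphi$ on $\{y=0\}$, and the same type of kernel analysis as before gives $\bar W\in C^{0,\beta}(\overline{\RR^{n+1}_+})$ with norm controlled by $\|\varphi\|_{C^\sigma}$. The difference $W-\bar W$ solves a homogeneous Dirichlet problem for $L_{-a}$ in $B^+_{3R/2}$; its odd reflection across $\{y=0\}$ is a weak solution of $L_{-a}$ (the weight $|y|^{-a}$ is $A_2$ since $-a\in(-1,1)$) in the entire ball $B_{3R/2}\subset\RR^{n+1}$. Applying Theorem~\ref{HolderFKS} to this odd reflection produces the required $C^{0,\beta}$ estimate for $W-\bar W$, and therefore for $y^au_y$, with the constant $C^2_R$ depending on $\|\varphi\|_{C^\sigma(\Gamma^0_{2R})}$.

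\textbf{Main obstacle.} The principal technical point is establishing the $C^{0,\beta}(\overline{\RR^{n+1}_+})$ estimates for the Neumann and Dirichlet Poisson convolutions $w=\Gamma_s\ast\tilde\varphi$ and $\bar W=P_{s'}\ast\tilde\varphi$ acting on merely $C^\sigma$ data, together with the correct quantitative dependence on $\|\varphi\|_{C^\sigma}$; this rests on a careful analysis of the explicit kernels and their scaling in $y$, and it is what ultimately determines the exponent $\beta$ in terms of $\sigma$ and $a$. A secondary but non-trivial bookkeeping step is justifying the reflected weak formulations under only the $H^1(B^+_R,y^a)$ hypothesis on $u$, which is handled by the even/odd test-function splitting outlined above, together with the density of smooth test functions in the weighted Sobolev setting.
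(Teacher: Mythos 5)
Your proposal follows essentially the same strategy as the paper's proof: cut off the datum, subtract a particular solution given by convolution with the Neumann fundamental solution (resp.\ the Poisson kernel of $L_{-a}$ after duality), reflect the remainder evenly (resp.\ oddly) across $\{y=0\}$, and apply Theorem~\ref{HolderFKS} to the reflected function. The only noticeable divergence is in how the H\"older bound for the Neumann particular solution is established: you propose a direct kernel analysis of $\Gamma_s\ast\tilde\varphi$, which you correctly flag as the main technical burden (note that $|(x,y)|^{2s-n}$ can even grow at infinity when $n=1$, $s>1/2$, so some care is needed even with compactly supported data), whereas the paper sidesteps this by observing that the trace of $\overline u$ solves $(-\Delta)^s\overline u(\cdot,0)=d_s\overline\varphi$ in $\RR^n$, invoking Proposition~2.9 of \cite{SilCPAM} for the boundary H\"older bound, and then (implicitly) lifting it to the halfspace via the Poisson-kernel convolution estimate already established at the end of the proof of Lemma~\ref{regNL}. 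Both routes are workable; the paper's is slightly more economical because it reuses machinery already in place.
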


\begin{proof}
Multiply $\varphi$ by a cut-off function identically $1$ in $\Gamma^0_{3R/2}$ and call it 
$\overline \varphi$. Thus $\overline \varphi$ is $C^\sigma_c(\RR^n)$ and 
$\overline \varphi \equiv \varphi$ in $\Gamma^0_{3R/2}$.  
Let $\overline u$ be the solution of 
\begin{equation*}
\begin{cases}
L_a \overline{ u}=0&\text{ in }\R^{n+1}_+\\ 
\displaystyle{\frac{\partial \overline{ u}}{\partial\nu^a}}
=\overline \varphi&\text{ on } \partial \R^{n+1}_+
\end{cases}
\end{equation*}
given in Remark~\ref{existNeum}. We have that $\overline u$
is continuous and bounded.

Then, the function $\overline{ u}|_{\partial \RR^{n+1}_+}$ solves 
$$(-\Delta)^s \overline{u} =d_s\overline \varphi\,\,\,\,\mbox{in}\,\,\RR^n $$
and by Proposition 2.9 in \cite{SilCPAM}, we have that $\overline{ u} \in C^{0,\beta}(\RR^n)$ 
for some $\beta \in (0,1)$ depending only on $n$ and $s=\frac{1-a}{2}$.  

We consider now $w=u-\overline{u}$. Then, $w$ is a bounded 
function and a weak solution of  
\begin{equation*}
\begin{cases}
L_a w=0&\text{ in } B^+_{3R/2}\subset\R^{n+1}_+\\ 
\displaystyle{\frac{\partial w}{\partial\nu^a}}
=0&\text{ on } \Gamma^0_{3R/2}.
\end{cases}
\end{equation*}
Reflecting evenly the function $w$ with respect to $\left \{ y=0 \right \}$, 
the reflected function $\tilde w$ satisfies in the weak sense the problem 
\begin{equation*}
\textrm{div\,} ( |y|^a \nabla \tilde w)=0\,\,\,\text{ in } B_{3R/2}\subset \R^{n+1}.
\end{equation*}
Since the weight $|y|^a$ is $A_2$ and the function $\tilde w$ is bounded, 
the regularity theory in \cite{FKS} (see Theorem \ref{HolderFKS} above) ensures that $\tilde w$ 
is $C^{0,\beta}$ 
for some $\beta \in (0,1)$ depending only on $n$ and $s$.

Putting these two results together ensures that $u \in C^{0,\beta}(\overline{B_R^+})$ 
for some $\beta \in (0,1)$ depending on $n$ and $s$. Furthermore, we get the estimate 
$$\|u\|_{C^{0,\beta}(\overline{B^+_R})} \leq C^1_R $$
for some constant $C^1_R$ as in the statement of the lemma (depending on $\|\varphi\|_{L^\infty}$ 
and not on $\|\varphi\|_{C^\sigma}$).   

On the other hand, by Proposition \ref{duality}, $\psi(x,y):=-y^a\,u_y(x,y)$ satisfies 
\begin{equation*}
\begin{cases}
L_{-a} \psi=0&\text{ in }\,B^+_{2R}\\ 
\psi= \varphi&\text{ on }  \Gamma^0_{2R}.
\end{cases}
\end{equation*}
We introduce
$$\tilde \psi(x,y)=P_{\bar s}(\cdot,y) \, * \, \overline \varphi,  $$
where $\bar s$ is such that $1-2\bar s =-a$.  Recall that $\overline \varphi \in C^\sigma (\RR^n)$ 
has compact support. Thus, $\tilde \psi$ is bounded and $C^\beta(\overline{\RR^{n+1}_+})$ if 
$\beta \leq \min(\sigma, 2\overline s )$ (for this, recall the argument on convolutions at 
the end of the proof of Lemma \ref{regNL}). 

Now, we have that the odd reflection 
of the function $\overline \psi= \psi-\tilde \psi$ satisfies in  weak sense 
\begin{equation*}
\textrm{div\,} ( |y|^{-a} \nabla \overline \psi )=0\,\,\,\text{ in } B_{3R/2}\subset \R^{n+1}.
\end{equation*}
Hence by the results in \cite{FKS} (see Theorem \ref{HolderFKS} above), $\overline \psi$ is 
$C^{\beta}$ for some $\beta \in (0,1)$, depending only on $n$ and $s$
(perhaps different than the previous $\beta$). This and the above fact on $\tilde \psi$ give 
the desired result and estimates for $\psi=-y^au_y=\overline \psi + \tilde \psi$ 
\end{proof}

\subsection{Gradient estimates and integrability at infinity}
The following two results concern bounds for solutions
of problem \eqref{extAlpha}.  

\begin{propo}\label{integrability}
Let $f$ be a $C^{1,\gamma}(\RR)$ function with $\gamma >\max(0,1-2s)$
and $u \in L^{\infty}(\R_+^{n+1})$ a weak solution of problem
\eqref{extAlpha}. 

Then, $\nabla_x u$ and $y^a \partial_y u$ belong to $L^\infty(\RR^{n+1}_+)$. In addition, 
given $s_0>1/2$, there exists a constant $C_1$ depending only on $n$, $s_0$, $\|f\|_{C^{1,\gamma}}$ 
and $\|u\|_{L^\infty(\RR^{n+1}_+)}$, such that for every $s >s_0$, we have 

\begin{equation}\label{C1bound}
\|\nabla_x u\|_{L^\infty({\RR^{n+1}_+})} +
 (1+a)\|y^a \partial_y u \|_{L^\infty({\RR^{n+1}_+})}\leq C_1 .
 \end{equation}

Furthermore,  we have
\begin{equation}
\label{decay}
|\nabla u(x,y)|\leq \frac{C_2}{y}\qquad\text{for } y>0, 
\end{equation}
where the constant $C_2$ is uniformly bounded for $a \in (-1,1)$.

As a consequence of \eqref{C1bound}, we have 
$$y^a |\nabla u|^2 \in L_{{\rm loc}}^1(\overline{\RR^{n+1}_+}).$$
\end{propo}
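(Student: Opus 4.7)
My plan is to prove the three assertions separately, exploiting the fact that the trace $v=u(\cdot,0)$ already carries essentially all the information (by Remark~\ref{equipb}), together with the duality principle (Proposition~\ref{duality}) and an interior rescaling argument.

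First, for the bound on $\nabla_x u$ in \eqref{C1bound}, I would observe that $v=u(\cdot,0)$ is a bounded solution of $(-\Delta)^s v = c_s f(v)$ in $\RR^n$ (with $c_s=2(1-s)d_s^{-1}$, which stays bounded as $s\uparrow 1$ by Remark~\ref{constant}). Lemma~\ref{regNL} then provides a $C^{2,\beta}$ bound on $v$ that is uniform for $s>s_0>1/2$, and furthermore a uniform bound on $\|D^2_x u\|_{C^\beta(\overline{\RR^{n+1}_+})}$ via the Poisson kernel representation $u(\cdot,y)=P_s(\cdot,y)*v$. In particular, $\nabla_x u = P_s(\cdot,y)*\nabla_x v$ is bounded in $L^\infty(\RR^{n+1}_+)$ uniformly in $s\in(s_0,1)$. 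For the bound on $(1+a)y^a\partial_y u$, I would use the duality principle: setting $w=-y^a\partial_y u$, Proposition~\ref{duality} together with the boundary condition gives $L_{-a}w=0$ in $\RR^{n+1}_+$ with Dirichlet data $(1+a)w|_{y=0}=f(v)$ (since $(1+a)\partial_{\nu^a}u=f(u)$ and $\partial_{\nu^a}u=-\lim y^a\partial_y u$). Continuity of $w$ up to $\{y=0\}$ (from Lemma~\ref{regularity1} applied to $u$) permits the Poisson representation for $L_{-a}$, i.e.\ convolution with the kernel $P_{1-s}$, which is a probability measure in $x$; hence $\|(1+a)w\|_{L^\infty}\le\|f(v)\|_{L^\infty}$, uniformly in $s$.

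Next, for the pointwise decay estimate~\eqref{decay}, I would use an interior scaling argument. The equation $L_a u=0$ can be written as $\Delta u+(a/y)u_y=0$ away from $\{y=0\}$. Fixing $z_0=(x_0,y_0)$ with $y_0>0$ and rescaling $\tilde u(z)=u(x_0+\tfrac{y_0}{2}z',\,y_0+\tfrac{y_0}{2}z_{n+1})$ for $z\in B_1$, the rescaled equation reads $\Delta \tilde u + \tfrac{a}{2+z_{n+1}}\partial_{z_{n+1}}\tilde u=0$, whose zero-order coefficient $a/(2+z_{n+1})$ is uniformly bounded on $B_1$ for $a\in(-1,1)$. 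Classical interior $C^1$ estimates for uniformly elliptic equations with bounded coefficients (which can be made uniform in $a\in(-1,1)$ since the ellipticity and coefficient bounds are) yield $|\nabla \tilde u(0)|\le C\|\tilde u\|_{L^\infty}\le C\|u\|_{L^\infty}$, and unscaling gives $|\nabla u(z_0)|\le (2C/y_0)\|u\|_{L^\infty}$.

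Finally, the local $L^1$ statement follows by combining the two bounds: $|\nabla_x u|\le C_1$ and $|\partial_y u|\le C_1 y^{-a}/(1+a)$, so $y^a|\nabla u|^2\le C\bigl(y^a+y^{-a}\bigr)$, which is locally integrable on $\overline{\RR^{n+1}_+}$ since $a\in(-1,1)$. The delicate point in the whole plan is tracking the $s$-dependence to get the uniform constant $C_1$: this hinges on Lemma~\ref{regNL}'s uniform-in-$s$ $C^{2,\beta}$ bound and on the observation that passing to the dual problem is the right way to bound the conormal derivative, since the raw factor $(1+a)\to 0$ must be balanced against the unbounded factor $y^{-a}$ hidden in $\partial_y u$.
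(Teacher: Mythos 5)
Your proposal is correct and follows essentially the same path as the paper: the $\nabla_x u$ bound from Lemma~\ref{regNL} together with the $s$-uniform behavior of the normalization constant and the Poisson representation; the $(1+a)y^a\partial_y u$ bound from the duality Proposition~\ref{duality} and the Neumann boundary condition; the interior gradient decay~\eqref{decay} by rescaling to a unit-size ball where the operator is uniformly elliptic with coefficient bounds independent of $a$; and the local integrability by combining the two $L^\infty$ bounds with $a\in(-1,1)$. The only cosmetic difference is that you rescale the nondivergence form $\Delta u + (a/y)u_y=0$ to get a bounded first-order coefficient, whereas the paper keeps the divergence form and notes that $(y')^a$ is Lipschitz on $1/2<y'<3/2$ with constants independent of $a$ — both are equivalent.
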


\begin{proof}
The bound $\|\nabla_x u \|_{L^\infty({\RR^{n+1}_+})}$ in \eqref{C1bound} follows from 
Lemma \ref{regNL} and the fact that 
$$\frac{d_s}{2(1-s)}=\frac{d_s}{1+a}\to 1\,\,\,\,\,\mbox{as}\,\,\,s \to 1; $$
see Remark 3.11. 
The bound on $(1+a)\|y^a \partial_y u \|_{L^\infty({\RR^{n+1}_+})}$ in \eqref{C1bound} 
follows from duality (see Proposition 3.6) and the boundary condition
$$-(1+a)y^a \partial_y u |_{y=0}=f(u) \in L^\infty(\RR^n).$$

The last bound \eqref{decay} follows from rescaling the equation $L_au=0$ in $B_{y_0/2}(x_0,y_0)$ 
to the same equation for $\overline u (x',y')=u(x_0+y_0x',y_0y')$ in $B_{1/2}(0,1)$. 
Then we use that the operator $L_a$ is uniformly elliptic and has Lipschitz coeffcients 
$(y')^a$ with constants independent of $a \in (-1,1) $ ---since $1/2 < y'<3/2$ in this ball.   
\end{proof}

The following result is concerned with solutions of
\eqref{extAlpha} with limits in one Euclidean variable, or in
the radial variable, at infinity. 

\begin{rema}\label{monotlayer}
{\rm
If  $u$ is a layer solution of \eqref{extAlpha}, then not only $u_x(x,0)>0$ but $u_x(x,y) >0$ 
for every $y \geq 0$. Indeed, since $L_a u_x=0$ in $\RR^{n+1}_+$ and $u_x $ is bounded and 
continuous in $\overline{\RR^{n+1}_+}$ (by Lemma \ref{regNL}), Remark \ref{equipb} gives 
that $u_x(\cdot ,y)$ is the convolution of $P_s(\cdot ,y)$ with $u_x(\cdot,0) >0$. Hence the result follows.  
}
\end{rema}

\begin{lem}\label{infinit}
{\rm (i)} Let $u$ be a bounded solution of \eqref{extAlpha} such that 
\begin{equation}\label{asslim}
\displaystyle{\lim_{x_1  \rightarrow \pm \infty}} u(x,0)=L^{\pm}
\end{equation}
for every $(x_2,...,x_{n})\in \mathbb{R}^{n-1}$ and some constants $L^\pm$. 
Then,
\begin{equation}
\label{fpm}
f(L^+)=f(L^-)=0
\end{equation}  
and 
\begin{equation} \label{limuypos}
\displaystyle{\lim_{x_1  \rightarrow \pm \infty}} u(x,y)=L^{\pm}
\end{equation}
for every $(x_2,...,x_{n})\in \mathbb{R}^{n-1}$ and $y \geq 0$. 
Moreover, for every fixed $R>0$ and $(x_2,...,x_{n})\in \R^{n-1}$,
we have 
\begin{equation}
\label{inf1}
\|u-L^\pm\|_{L^\infty(B^+_R(x,0))} \rightarrow 0\,\,\mbox{as $x_1
  \rightarrow \pm \infty$},
\end{equation}
\begin{equation}
\label{inf2}
\|\nabla_x u\|_{L^\infty(B^+_R(x,0))} \rightarrow 0\,\,\mbox{as $x_1
  \rightarrow \pm \infty$},
\end{equation}
and
\begin{equation}
\label{inf3}
\|y^a u_y\|_{L^\infty(B^+_R(x,0))} \rightarrow 0\,\,\mbox{as $x_1
  \rightarrow \pm \infty$.}
\end{equation}

{\rm (ii)} Let $u$ be a radial solution of \eqref{extAlpha} such that 
\begin{equation}\label{asslimrad}
\displaystyle{\lim_{|x|  \rightarrow  \infty}} u(|x|,0)=0.
\end{equation}
Then,
\begin{equation}
\label{fpmrad}
f(0)=0. 
\end{equation}  
Moreover, for every fixed $R>0$, we have 
\begin{equation}
\label{inf2rad}
\|u\|_{L^\infty(B^+_R(x,0))}+\|\nabla_x u\|_{L^\infty(B^+_R(x,0))}+\|y^a u_y\|_{L^\infty(B^+_R(x,0))}  
\rightarrow 0\,\,\mbox{as $|x|
  \rightarrow \infty$}.
\end{equation}
\end{lem}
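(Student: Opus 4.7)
The strategy is a translation-compactness argument, which I would run uniformly for both parts of the statement. For part (i) at $+\infty$, fix any sequence $x_1^k\to+\infty$ and define the translates $u^k(x,y):=u(x+x_1^k e_1,y)$. Each $u^k$ is a bounded weak solution of \eqref{extAlpha} with the same $f$ and the same $L^\infty$ bound. Because $\varphi:=f(u^k)/(1+a)$ is uniformly bounded in $L^\infty$ and, thanks to Lemma \ref{regNL}, uniformly bounded in $C^\sigma$ on compact sets of $\R^n$, Lemma \ref{regularity1} yields $C^\beta$ bounds up to the boundary, uniform in $k$, for both $u^k$ and $y^a u^k_y$ on every compact subset of $\overline{\R^{n+1}_+}$. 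By Arzelà--Ascoli, along a subsequence, $u^k\to u^\infty$ and $y^a u^k_y\to y^a u^\infty_y$ locally uniformly in $\overline{\R^{n+1}_+}$, and passing to the limit in the weak formulation shows that $u^\infty$ is itself a bounded solution of \eqref{extAlpha}.

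Next I would identify the limit. For each fixed $x\in\R^n$, the hypothesis \eqref{asslim} gives $u^k(x,0)=u(x_1+x_1^k,x_2,\dots,x_n,0)\to L^+$, so $u^\infty(\cdot,0)\equiv L^+$. The constant function $L^+$ is also a bounded continuous solution of the Dirichlet problem for $L_a$ with that same constant boundary datum, so the uniqueness part of Corollary \ref{uniqueness} forces $u^\infty\equiv L^+$ in $\overline{\R^{n+1}_+}$. Plugging this into the Neumann condition yields $f(L^+)=(1+a)\partial_{\nu^a}u^\infty=0$, and the same argument with $x_1^k\to-\infty$ gives $f(L^-)=0$, proving \eqref{fpm}. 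Since the subsequential limit is uniquely determined, the full sequence converges, which establishes \eqref{limuypos} and the uniform bound \eqref{inf1}. The convergence $y^a u^k_y\to 0$ is part of the same Arzelà--Ascoli package, giving \eqref{inf3}.

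To obtain \eqref{inf2}, I would apply Lemma \ref{regularity1} once more, this time to the tangential derivatives. For each $i=1,\dots,n$, the function $\partial_{x_i}u^k$ weakly satisfies $L_a(\partial_{x_i}u^k)=0$ in $\R^{n+1}_+$ with Neumann datum $(1+a)\partial_{\nu^a}(\partial_{x_i}u^k)=f'(u^k)\partial_{x_i}u^k$ on $\partial\R^{n+1}_+$, and this right-hand side is uniformly bounded in $L^\infty$ (indeed in $C^\sigma$ on compacts) by Proposition \ref{integrability} and Lemma \ref{regNL}. Lemma \ref{regularity1} then delivers a uniform $C^\beta$ bound on $\partial_{x_i}u^k$ up to the boundary. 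Since $u^\infty\equiv L^+$ has vanishing derivatives, Arzelà--Ascoli forces $\nabla_x u^k\to 0$ locally uniformly, which is \eqref{inf2}.

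Part (ii) is handled by the same scheme, with the translations $u^k(x,y):=u(x+x^k,y)$ for any sequence $x^k\in\R^n$ with $|x^k|\to\infty$. The only change is in identifying the boundary limit: by radiality, for each fixed $x$ one has $u^k(x,0)=u(|x+x^k|,0)\to 0$ via \eqref{asslimrad}, since $|x+x^k|\to\infty$. The remainder of the argument runs verbatim, giving $u^\infty\equiv 0$, hence $f(0)=0$ in \eqref{fpmrad} and the uniform decay \eqref{inf2rad}. The main obstacle is controlling the tangential gradient uniformly up to the degenerate boundary, since classical Schauder theory is unavailable; this is overcome by noting that tangential differentiation preserves the structure of the equation, so Lemma \ref{regularity1} can be reapplied to $\partial_{x_i}u^k$ with a Neumann datum that remains bounded thanks to the already-established $L^\infty$ bound on $\nabla_x u$.
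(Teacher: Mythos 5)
Your proposal is correct and follows essentially the same translation-plus-compactness scheme as the paper: slide $u$ in the $x_1$-direction (or in all Euclidean directions in the radial case), invoke the $C^\beta$ estimates of Lemmas~\ref{regularity1} and~\ref{regNL} to extract locally uniformly convergent subsequences, and identify the limit as the constant solution, from which \eqref{fpm}, \eqref{fpmrad}, \eqref{limuypos}, \eqref{inf1}, and \eqref{inf3} follow. The only small deviation is in obtaining \eqref{inf2}: you differentiate the Neumann problem and reapply Lemma~\ref{regularity1} to $\partial_{x_i}u^k$, whereas the paper directly invokes the $C^\beta$ bound on $\nabla_x u$ already furnished by Lemma~\ref{regNL}; both routes are valid.
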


\begin{proof}
As in Lemma 2.4 in \cite{CSM} for the half-Laplacian, the lemma  
 follows easily by a compactness argument and the invariance
of the problem under translations in $x_1$. Indeed, in both cases (i)
and (ii) of the lemma, one considers the family
of translated (or slided) solutions in the $x_1$-variable.
In the radial case (ii) we proceed like this in each Euclidean variable, 
not only the $x_1$.

By the H\"older estimates of Lemma~\ref{regularity1}, the translated solutions
converge locally uniformly and up to subsequences, to a solution of the same problem
\eqref{extAlpha}. By assumption \eqref{asslim} or \eqref{asslimrad}, such limit is
identically constant. {From} this, \eqref{fpm}, \eqref{fpmrad}, \eqref{limuypos}, \eqref{inf1}
and \eqref{inf3} follow immediately. Finally, the $C^{\beta}$ estimate for $\nabla_x u$
of Lemma~\ref{regNL} leads to \eqref{inf2} or \eqref{inf2rad}.
\end{proof}

\subsection{A Harnack inequality}
The following Harnack inequality for linear Neumann problems
will be useful in the study of stable solutions
of~\eqref{problem}. 
\begin{lem}
\label{harnack}
Let $\varphi\in H^1(B_{4R}^+,y^a)$ be a nonnegative weak solution of
\begin{equation*} 
\label{linearized}
\begin{cases}
L_a \varphi = 0&\text{ in } B_{4R}^+\subset\R^{n+1}_+\\ 
\dfrac{\partial\varphi}{\partial\nu^a}+d(x)\varphi = 0&\text{ on }
\Gamma^0_{4R} ,
\end{cases}
\end{equation*}
where $d$ is a bounded function in $\Gamma_{4R}^0$. Then,
\begin{equation}\label{harnackineq}
\sup_{B_R^+} \varphi \le C_R\, \inf_{B_R^+} \varphi  ,
\end{equation}
for some constant $C_R$ depending only on $n$, $a$, and 
$R^{1-a}  \|d\|_{L^\infty(\Gamma_{4R}^0)}$.
\end{lem}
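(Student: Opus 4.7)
My plan is to reduce the Robin boundary problem, via an auxiliary positive function $g$, to a drift--reaction equation with homogeneous Neumann condition, so that even reflection across $\{y=0\}$ reduces matters to the weighted Harnack inequality of Fabes--Kenig--Serapioni (Theorem~\ref{HarnackFKS}).

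I would begin by scaling. Setting $\tilde\varphi(x,y)=\varphi(Rx,Ry)$, a direct computation using $\nabla\tilde\varphi=R(\nabla\varphi)(Rx,Ry)$ gives $L_a\tilde\varphi(x,y)=R^{2-a}(L_a\varphi)(Rx,Ry)$ and $\partial_{\nu^a}\tilde\varphi(x)=R^{1-a}(\partial_{\nu^a}\varphi)(Rx)$. Hence $\tilde\varphi$ satisfies $L_a\tilde\varphi=0$ in $B_4^+$ and $\partial_{\nu^a}\tilde\varphi + \tilde d\,\tilde\varphi=0$ on $\Gamma^0_4$ with $\tilde d(x)=R^{1-a}d(Rx)$, so $\|\tilde d\|_{L^\infty(\Gamma^0_4)}=R^{1-a}\|d\|_{L^\infty(\Gamma^0_{4R})}$. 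It therefore suffices to prove \eqref{harnackineq} for $R=1$ with the constant depending only on $n$, $a$, and $\|\tilde d\|_\infty$.

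Next I would construct the positive auxiliary function $g$ on $B_3^+$ as the principal eigenfunction of the Robin--Dirichlet problem
\begin{equation*}
L_a g=-\lambda_1 y^a g\text{ in } B_3^+,\quad \partial_{\nu^a}g + d g=0\text{ on }\Gamma^0_3,\quad g=0\text{ on }\Gamma^+_3,
\end{equation*}
whose existence and strict positivity in $B_3^+\cup\Gamma^0_3$ follow from Krein--Rutman applied to the inverse of a suitably shifted operator $-L_a + My^a$ (positive and compact on weighted $L^2$ by Theorem~\ref{solveFKS} and the compact weighted Sobolev embedding of \cite{FKS}), combined with the strong maximum and Hopf principles in Remark~\ref{MPweak} and Proposition~\ref{hopf}. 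By Lemma~\ref{regularity1} and the Hopf lower bound, $g\in C^{0,\beta}(\overline{B_2^+})$ with $c_0\le g\le C_0$ on $\overline{B_2^+}$ for some $c_0,C_0>0$ depending only on $n$, $a$, and $\|\tilde d\|_\infty$. Setting $u=\varphi/g$ in $B_2^+$ and using $L_a\varphi=0$ together with the eigenfunction equation for $g$ yields
\begin{equation*}
L_a u + 2 y^a g^{-1}\nabla g\cdot\nabla u + \lambda_1 y^a u=0\text{ in }B_2^+,\qquad \partial_{\nu^a}u=0\text{ on }\Gamma^0_2.
\end{equation*}

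Since $u\ge 0$ has homogeneous Neumann data on $\Gamma^0_2$, its even reflection $\tilde u(x,y)=u(x,|y|)$ is a nonnegative weak solution of the corresponding drift--reaction equation on $B_2\subset\RR^{n+1}$ with $A_2$ weight $|y|^a$, and the weighted Harnack inequality of \cite{FKS} (Theorem~\ref{HarnackFKS} together with its standard extension to operators with bounded drift and zeroth-order term) gives $\sup_{B_1}\tilde u\le C\inf_{B_1}\tilde u$; multiplying back by $g$ and using $c_0\le g\le C_0$ yields \eqref{harnackineq}. The hardest step is obtaining the quantitative lower bound $g\ge c_0$ with $c_0$ depending only on the prescribed quantities: existence and positivity of the principal eigenfunction are soft, but tracking the constants in the Hopf principle and the interior Harnack through the scaling requires care, especially near the endpoints $a=\pm 1$. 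A technically simpler alternative would be a direct Moser iteration on $\varphi$ using test functions $\varphi^{2\beta-1}\zeta^2$, absorbing the Robin boundary integral $\int_{\Gamma^0}d\,\varphi^{2\beta}\zeta^2$ into the Dirichlet energy via the weighted trace inequality for $H^1(B^+,y^a)$, in parallel with the original proof of Theorem~\ref{HarnackFKS} in \cite{FKS}.
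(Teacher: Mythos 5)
Your approach (a genuine ground-state transform by the principal Robin--Dirichlet eigenfunction) is a legitimately different idea from the paper's. The paper instead multiplies by the \emph{explicit} factor $e^{\pm Ay^{1-a}}$ with $A=\|d\|_\infty/(1-a)$, which only achieves a one-sided conormal sign on $\Gamma^0_4$: $\partial_{\nu^a}\Psi^A\le 0$ and $\partial_{\nu^a}\Psi^{-A}\ge 0$. After even reflection these become sub/supersolutions of a pure (no boundary-condition) equation $\mathcal L_{\pm A}w=0$ in the full ball $B_4$; the paper then solves Dirichlet problems $\mathcal L_{\pm A}h^{\pm A}=0$ with $h^{\pm A}=\tilde\Psi^{\pm A}$ on $\partial B_4$, sandwiches $\tilde\Psi^A\le h^A\le e^{64A}h^{-A}\le e^{64A}\tilde\Psi^{-A}$ by weak maximum principle, and finally applies Theorem~\ref{HarnackFKS} (the pure FKS Harnack, with no lower-order terms) to $e^{-A|y|^{1-a}}h^A$. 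The whole point of using an explicit exponential rather than a true eigenfunction is that its two-sided bounds on $\overline{B_4^+}$ are immediate.

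The gap in your argument is exactly at the step you flag as ``requiring care.'' The quantitative lower bound $g\ge c_0$ on $\overline{B_2^+}$, with $c_0$ depending only on $n$, $a$, and $\|d\|_\infty$, is not a matter of bookkeeping near $a=\pm1$: with $\|g\|_{L^\infty}=1$ you get an interior lower bound from interior Harnack, but pushing it up to $\Gamma^0_2$ is a \emph{boundary} Harnack/quantitative-Hopf statement for the Robin problem---which is essentially the content of the lemma you are trying to prove, so the argument as written is circular. Proposition~\ref{hopf} gives only a strict sign, not a rate; its corollary assumes $d$ H\"older while the present lemma assumes only $d$ bounded; and a compactness contradiction over $\{\|d\|_\infty\le D\}$ fails because the weak-$*$ limit of $d_k$ loses the regularity needed to conclude the limiting eigenfunction is strictly positive on $\Gamma^0_2$. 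Two smaller points: (i) the ground-state identity actually reads $\mathrm{div}(g^2y^a\nabla u)=\lambda_1 y^a g^2 u$, i.e.\ $L_a u+2y^ag^{-1}\nabla g\cdot\nabla u-\lambda_1 y^a u=0$ (your $+\lambda_1$ is a sign slip, though harmless); (ii) even granting bounds on $g$, you need a Harnack inequality for $A_2$-weighted divergence operators with a zeroth-order term comparable to the weight, which is not what Theorem~\ref{HarnackFKS} states, so this would need a separate citation or proof. Your alternative suggestion of a direct Moser iteration absorbing the boundary integral via a weighted trace inequality does avoid the circularity and would be a genuinely self-contained route, but it is considerably heavier than the paper's two-line exponential trick.
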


\begin{proof}
By scaling, one can assume $R=1$. We introduce the new function
\begin{equation*}
\Psi^A(x,y)=e^{Ay^{1-a}}\varphi(x,y). 
\end{equation*}
The function $\Psi^A$ satisfies 
\begin{equation*}
\begin{cases}
\textrm{div\,}  (y^a \nabla (e^{-Ay^{1-a}} \Psi^A))   = 0&\text{ in } B_{4}^+\\
\dfrac{\partial\Psi^A}{\partial\nu^a}= -(A(1-a)+d(x))\Psi^A&\text{ on }
\Gamma^0_{4} .
\end{cases}
\end{equation*}
Therefore, choosing 
$$
A = \frac{\|d\|_{L^\infty(\Gamma_{4}^0)}}{1-a},
$$
we have $\partial_{\nu^a}  \Psi^A \leq 0$ on $\Gamma^0_{4}$. 
We consider the even extension of $\Psi^A$ across $\Gamma^0_{4}$,
defined by
$$
\tilde\Psi^A(x,y)= \Psi^A(x,-y)\quad \mbox{ for } (x,y)\in B_4, y\le 0.
$$ 
Since $\partial_{\nu^a}  \Psi^A \leq 0$ on $\Gamma^0_{4}$, $\tilde\Psi^A$ satisfies
\begin{equation*}
-\textrm{div\,} (|y|^a \nabla (e^{-A|y|^{1-a}} \tilde \Psi^A)) \le 0  
\quad \text{in } B_4 
\end{equation*}
in the weak sense. 

Next, taking $-A$ we obtain $\partial_{\nu^a}  \Psi^{-A} \geq 0$ on $\Gamma^0_{4}$,
and arguing as before we deduce that $\tilde\Psi^{-A}$ satisfies
\begin{equation*}
-\textrm{div\,} (|y|^a  \nabla (e^{A|y|^{1-a}} \tilde \Psi^{-A}))\ge 0  
\quad \text{in } B_4
\end{equation*}
in the weak sense. 

Denote by $\mathcal L_{A}$ the operator
$$\mathcal L_{A} w:= \textrm{div\,} (|y|^a  \nabla (e^{-A|y|^{1-a}} w)).$$
We introduce now the solutions $h^{\pm A}$ of 
$$
\begin{cases}  
\mathcal L_{\pm A} h^{\pm A}=0&\quad\text{in $B_4$}\\
h^{\pm A}=\tilde\Psi^{\pm A}&\quad\text{on $\partial B_4$.}
\end{cases}
$$
These solutions are obtained from the solutions of the Dirichlet problem for $L_a$ 
given by Theorem 3.2, after multiplying $h^{\pm A}$ by $e^{\pm A|y|^{1-a}}$.
By the weak maximum principle and the previous considerations,
we have that
\begin{equation}
\label{weak
comp1}
\tilde\Psi^{A}\leq h^{A} \quad\text{and}\quad h^{-A}\leq\tilde\Psi^{-A}
\quad\text{ in } B_4.
\end{equation}

On the other hand, since $\Psi^{A}/\Psi^{-A}=e^{2Ay^{1-a}}\leq
e^{2A4^{1-a}}\leq e^{32A}$ in $B_4^+$, we have that  
$\tilde\Psi^{A}\leq e^{32A}\tilde\Psi^{-A}$ 
on $\partial B_4$. Next, since $\mathcal L_{A} h^{A}=0=\mathcal L_{-A} h^{-A}
=\mathcal L_{A}(e^{2A|y|^{1-a}}h^{-A})$ and on the boundary $\partial B_4$,
$h^{A}=\tilde\Psi^{A} \leq  e^{32A}\tilde\Psi^{-A}
\leq  e^{32A}e^{2A|y|^{1-a}}\tilde\Psi^{-A}= e^{32A}\left \{e^{2A|y|^{1-a}}h^{-A}\right \}$,
the weak maximum principle for the operator $\mathcal L_{A}$ leads to
\begin{equation}
\label{weakcomp2}
h^{A}\leq e^{32A} \left \{e^{2A|y|^{1-a}}h^{-A} \right \}\leq e^{64A} h^{-A}\quad\text{in } B_4.
\end{equation}

Next, note that $L_a(e^{-A|y|^{1-a}}h^A)=0$ in $B_4$. According to the Harnack inequality 
of Fabes-Kenig-Serapioni, Lemma~2.3.5 of \cite{FKS} (Theorem 3.4 above), we deduce that 
$$
\sup_{B_1}(e^{-A|y|^{1-a}}h^A)\leq C \inf_{B_1}(e^{-A|y|^{1-a}}h^A)
$$
for some constant $C$ depending only on $n$ and $a$. Thus,
\begin{equation}
\label{weakcomp3}
\sup_{B_1^+}h^A \leq C e^{A} \inf_{B_1^+}h^A.
\end{equation}

Using \eqref{weakcomp1} and  \eqref{weakcomp2}, we deduce
\begin{equation}
\label{weakcomp4}
\varphi \leq \tilde\Psi^A \leq h^A \leq e^{64A} 
h^{-A} \leq e^{64A} \tilde\Psi^{-A} \leq e^{64A} \varphi
\quad\text{ in } B_1^+.
\end{equation}
Finally, \eqref{weakcomp4} and \eqref{weakcomp3}
lead immediately to the desired result. 
\end{proof}

\subsection{A Liouville theorem}
We prove the following theorem, which will be useful in \cite{CS2} to
prove a symmetry result for stable solutions of \eqref{extAlpha} in $\mathbb{R}^2_+$ . 
This is a generalization to degenerate elliptic equations of the Liouville
theorem given in \cite{CSM}. This type of result had been already used
in \cite{AC} to prove the De Giorgi conjecture for reactions in the
interior in three dimensional spaces. 
 
\begin{theorem}\label{liouville}
Let $\varphi \in L^{\infty}_{\rm loc}(\overline{\mathbb{R}_+^{n+1}})$ be a positive function. 
Suppose that $\sigma \in
H^1_{\rm loc}(\overline{\mathbb{R}_+^{n+1}}, y^a)$
is such that
\begin{equation}\label{liouveq}
\begin{cases}
-\sigma {\rm div\,}(y^a \varphi^2 \nabla\sigma) \leq 0
&\quad \hbox{in } \R^{n+1}_+\\
-\sigma y^a \partial_y \sigma  \leq 0
&\quad \hbox{on } \partial\R^{n+1}_+
\end{cases}
\end{equation}
in the weak sense. Assume that for every $R >1$, 
\begin{equation}\label{intR2}
\int_{B^+_R} y^{a} (\sigma \varphi)^2 \;dxdy\leq C R^2
\end{equation}
for some constant $C$ independent of $R$. 

Then $\sigma$ is constant. 
\end{theorem}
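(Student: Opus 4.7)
\medskip

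\noindent \textbf{Proof plan for Theorem \ref{liouville}.}
The plan is a Caccioppoli/energy argument combined with an appropriately chosen logarithmic cutoff. First, I would establish a Caccioppoli-type inequality by testing the differential inequality in \eqref{liouveq} against $\sigma \zeta^2$, where $\zeta$ is a nonnegative Lipschitz cutoff with compact support in $\overline{\R^{n+1}_+}$. Since $\sigma \in H^1_{\rm loc}(\overline{\R^{n+1}_+},y^a)$ and $\varphi \in L^\infty_{\rm loc}$, the product $\sigma \zeta^2$ is an admissible test function. After integration by parts in $\R^{n+1}_+$ the boundary integral on $\{y=0\}$ equals $\int (\sigma\zeta^2)\varphi^2 \lim_{y\downarrow 0}(y^a \partial_y \sigma)\,dx$, and the second inequality in \eqref{liouveq} (together with $\varphi^2,\zeta^2\ge 0$) makes this boundary contribution nonnegative. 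Thus
\begin{equation*}
\int_{\R^{n+1}_+} y^a\varphi^2|\nabla\sigma|^2\zeta^2\,dxdy
\;\le\; -2\int_{\R^{n+1}_+} y^a\varphi^2\sigma\zeta\,\nabla\sigma\cdot\nabla\zeta\,dxdy,
\end{equation*}
and Cauchy--Schwarz absorbs half of the gradient term to yield
\begin{equation*}
\int_{\R^{n+1}_+} y^a\varphi^2|\nabla\sigma|^2\zeta^2\,dxdy
\;\le\; C\int_{\R^{n+1}_+} y^a(\sigma\varphi)^2|\nabla\zeta|^2\,dxdy.
\end{equation*}

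The second step is to choose $\zeta$ so that the right-hand side tends to zero. A naive cutoff ($\zeta\equiv 1$ on $B_R^+$, $0$ outside $B_{2R}^+$, $|\nabla\zeta|\le C/R$) combined with \eqref{intR2} only yields boundedness, not decay; this is the main obstacle, and it is overcome by using a logarithmic cutoff. For $R>1$, define
\begin{equation*}
\zeta_R(x,y)=\begin{cases} 1 & \text{if } |(x,y)|\le \sqrt R,\\
\dfrac{2(\log R - \log|(x,y)|)}{\log R} & \text{if } \sqrt R \le |(x,y)|\le R,\\
0 & \text{if } |(x,y)|\ge R,
\end{cases}
\end{equation*}
so that $|\nabla\zeta_R|\le C/(|(x,y)|\log R)$ on the annular region $B_R^+\setminus B_{\sqrt R}^+$ and vanishes elsewhere.

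With this choice, the remaining step is a dyadic decomposition of the annulus. Write $B_R^+\setminus B_{\sqrt R}^+=\bigcup_{k=0}^{K}A_k^+$ with $A_k^+=B_{2^{k+1}\sqrt R}^+\setminus B_{2^k\sqrt R}^+$ and $K\sim \tfrac12 \log_2 R$. On $A_k^+$ one has $|(x,y)|\ge 2^k\sqrt R$, so by \eqref{intR2},
\begin{equation*}
\int_{A_k^+} y^a(\sigma\varphi)^2|\nabla\zeta_R|^2
\;\le\; \frac{C}{(2^k\sqrt R\,\log R)^2}\int_{A_k^+} y^a(\sigma\varphi)^2
\;\le\; \frac{C\,(2^{k+1}\sqrt R)^2}{(2^k\sqrt R\,\log R)^2}
\;=\;\frac{C'}{(\log R)^2}.
\end{equation*}
Summing over $k$ gives $\int y^a(\sigma\varphi)^2|\nabla\zeta_R|^2\le CK/(\log R)^2\le C/\log R$, and hence
\begin{equation*}
\int_{B_{\sqrt R}^+} y^a\varphi^2|\nabla\sigma|^2\,dxdy \;\le\; \frac{C}{\log R}\longrightarrow 0 \quad\text{as }R\to\infty.
\end{equation*}
Since $\varphi>0$ is locally bounded below by a positive constant on every compact set of $\overline{\R^{n+1}_+}$, and since $y^a>0$ a.e., this forces $\nabla\sigma\equiv 0$ a.e. in $\R^{n+1}_+$, so $\sigma$ is constant. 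The whole argument reduces to these three ingredients: (i) the Caccioppoli inequality coming from testing with $\sigma\zeta^2$ and exploiting the sign in the Neumann condition, (ii) a logarithmic cutoff tailored to the quadratic growth \eqref{intR2}, and (iii) a dyadic sum to control the annular integral.
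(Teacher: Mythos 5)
Your proof is correct, and it reaches the same Caccioppoli inequality as the paper, but the second half of the argument is genuinely different. After the integration by parts and the sign observation on the boundary term, the paper applies Cauchy--Schwarz \emph{asymmetrically}: instead of absorbing the gradient term into the left-hand side, it keeps the annular piece $\bigl(\int_{\{R<r<2R\}}y^a\zeta_R^2\varphi^2|\nabla\sigma|^2\bigr)^{1/2}$ as one factor and the term $\bigl(R^{-2}\int_{B_{2R}^+}y^a(\sigma\varphi)^2\bigr)^{1/2}$ (controlled by \eqref{intR2}) as the other. This gives $I_R\le C\,I_R^{1/2}$, hence uniform boundedness of $I_R$, hence finiteness of $\int_{\R^{n+1}_+}y^a\varphi^2|\nabla\sigma|^2$; and then the annular factor tends to zero as the tail of a convergent integral, forcing the full integral to vanish. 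You instead absorb the cross term (yielding $\int y^a\varphi^2|\nabla\sigma|^2\zeta^2\le C\int y^a(\sigma\varphi)^2|\nabla\zeta|^2$) and recover decay by switching to a logarithmic cutoff plus a dyadic decomposition of the annulus $B_R^+\setminus B_{\sqrt R}^+$, which produces the extra factor $1/\log R$. Both routes are sound; the paper's is shorter and avoids the logarithmic cutoff machinery, while yours is a standard robust alternative (familiar from the Berestycki--Caffarelli--Nirenberg style Liouville theorems) that does not require the ``bounded-then-decaying'' observation. Two small remarks: your statement that the naive cutoff ``only yields boundedness, not decay'' is true only for the absorbed form of Cauchy--Schwarz --- the paper shows the plain cutoff does give decay if one does not absorb; and your final step invokes local positive lower bounds on $\varphi$, which are not guaranteed by $\varphi\in L^\infty_{\rm loc}$ and $\varphi>0$, but this is harmless since $\int y^a\varphi^2|\nabla\sigma|^2=0$ with $\varphi>0$ a.e.\ and $y^a>0$ a.e.\ already forces $\nabla\sigma\equiv 0$.
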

\begin{proof}
We adapt the proof given in \cite{CSM}. Let $\zeta$ be a $C^\infty$ function on $[0,+\infty)$ 
such that
$0\le\zeta\le 1$ and  
$$
\zeta = \begin{cases}  1&\quad\text{for $0\le t\le 1$,}\\
                0&\quad\text{for $t\ge 2$.}
\end{cases}
$$
For $R>1$ and $(x,y)\in\R^{n+1}_{+}$, let
$\zeta_R(x,y)=\zeta \left( r / R\right)$, where $r=\vert(x,y)\vert$.

Multiplying \eqref{liouveq} by $\zeta_R^2$ and integrating by parts
in $\R^{n+1}_+$, we obtain
\begin{eqnarray*}
& & \hspace{-2cm}
\int_{\R^{n+1}_+} y^a \zeta_R^2\, \varphi^2 \vert\nabla\sigma\vert^2\;dxdy 
\leq 
-2 \int_{\R^{n+1}_+}y^a \zeta_R\, \varphi^2 \sigma\, \nabla\zeta_R\, \nabla\sigma \;dxdy\\
\ \hspace{-1cm}
& \leq  & 2 \left [ \int_{\R^{n+1}_+\cap\{R<r<2R \}} 
 y^a \zeta_R^2\, \varphi^2  \vert\nabla\sigma\vert^2\;dxdy \right ]^{1/2} \cdot
\\
\hspace{1cm}
& & \qquad \cdot \left [ \int_{\R^{n+1}_+} y^a\varphi^2  \sigma^2 
\vert\nabla\zeta_R\vert^2 \;dxdy
\right ]^{1/2} \\
\hspace{-1cm}
& \leq  & C \left [ \int_{\R^{n+1}_+\cap\{R<r<2R \}} 
 y^a \zeta_R^2\,\varphi^2 \vert\nabla\sigma\vert^2 \;dxdy\right ]^{1/2}
\cdot
\\
\hspace{1cm}
& & \qquad \cdot
\left [\frac{1}{R^2} \int_{B_{2R}^+} y^{a} (\varphi\sigma)^2 \;dxdy
\right ]^{1/2},
\end{eqnarray*}
for some constant $C$ independent of $R$. Using hypothesis
\eqref{intR2}, we infer that 
\begin{multline}\label{boundann}
\int_{\R^{n+1}_+}y^a \zeta_R^2\, \varphi^2 \vert\nabla\sigma\vert^2\;dxdy \le \\
C \left [ \int_{\R^{n+1}_+\cap\{R<r<2R \}} 
y^a \zeta_R^2\,  \varphi^2 \vert\nabla\sigma\vert^2\;dxdy \right ]^{1/2} ,
\end{multline}
again with $C$ independent of $R$. Hence,
$\int_{\R^{n+1}_+}  y^a \zeta_R^2\, \varphi^2  \vert\nabla\sigma\vert^2\;dxdy \le C$ and,
letting $R\rightarrow\infty$, we deduce
$\int_{\R^{n+1}_+}y^a \varphi^2  \vert\nabla\sigma\vert^2\;dxdy \le C$.
It follows that the right hand side of \eqref{boundann} tends to
zero as $R\rightarrow\infty$, and therefore
$\int_{\R^{n+1}_+} y^a \varphi^2 \vert\nabla\sigma\vert^2\;dxdy =0$.
We conclude that $\sigma$ is constant.
\end{proof}

\subsection{A Hopf principle}
The following proposition provides a Hopf boundary lemma in our context. 
\begin{propo}\label{hopf}
Let $a \in (-1,1)$ and consider the cylinder 
$\mathcal C_{R,1}= \Gamma_R^0 \times (0,1) \subset \RR^{n+1}_+$ where $\Gamma_R^0$ 
is the ball of center $0$ and radius $R$ in $\RR^n$. Let $u \in C(\overline{\mathcal C_{R,1}}) 
\cap H^1(\mathcal C_{R,1},y^a)$ satisfy  
\begin{equation*}
\begin{cases}
L_a u \leq 0&\text{ in } \mathcal C_{R,1} \\
u> 0&\text{ in } \mathcal C_{R,1} \\ 
u(0,0)=0.&
\end{cases}
\end{equation*}

Then, 
$$\limsup_{y \rightarrow 0^+ }-y^a \frac{u(0,y)}{y}<0.$$
In addition, if $y^a u_y \in C(\overline{\mathcal C_{R,1}})$, then 
$$\partial_{\nu^a} u (0,0) <0. $$ 
\end{propo}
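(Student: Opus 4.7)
The plan is a Hopf-type barrier argument adapted to the degenerate weight $y^a$. The first conclusion $\limsup_{y\to 0^+}-y^a u(0,y)/y<0$ is equivalent to $\liminf_{y\to 0^+} u(0,y)/y^{1-a}>0$, which I would establish by building a local $L_a$-subsolution $\phi$ with the correct asymptotic behavior at the origin and applying the weak maximum principle of Remark~\ref{MPweak}. First I would fix a tangent ball $B_\delta(P)$ with $P=(0,\delta)$ and $\delta>0$ small enough that $\overline{B_\delta(P)}\subset \overline{\mathcal C_{R,1}}$. This ball meets $\{y=0\}$ only at $(0,0)$, so the annulus $A=B_\delta(P)\setminus\overline{B_{\delta/2}(P)}$ lies strictly in $\{y>0\}$ with just that single tangency point on $\overline{A}\cap\{y=0\}$, and on $A$ the operator $L_a$ is uniformly elliptic. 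Since $u>0$ in $\mathcal C_{R,1}$ and is continuous, compactness yields $m>0$ with $u\ge m$ on $\partial B_{\delta/2}(P)$.

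On $A$ I would construct $\phi\ge 0$ satisfying (i)~$L_a\phi\ge 0$, (ii)~$\phi\equiv 0$ on $\partial B_\delta(P)$, (iii)~$c_0\phi\le m$ on $\partial B_{\delta/2}(P)$ for a convenient $c_0>0$, and (iv)~$\phi(0,y)\ge c\,y^{1-a}$ as $y\to 0^+$. The starting observation is the algebraic identity
$$
L_a(y^{1-a}\Psi)=y\,\Delta\Psi+(2-a)\,\partial_y\Psi,
$$
which follows from a short direct computation using that $y^{1-a}$ is $L_a$-harmonic in the half-space. Writing $\phi=y^{1-a}\Psi$, property~(iv) becomes the requirement that $\Psi(0,y)$ is bounded below by a positive constant as $y\to 0^+$, while (i) reduces to a mixed first/second-order sign condition on $\Psi$. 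A natural ansatz is $\Psi(x,y)=e^{\alpha y}\chi(z)$ with $\alpha$ large and $\chi\ge 0$ a smooth cutoff that vanishes on $\partial B_\delta(P)$ and equals $1$ near the origin: the exponential contributes the positive zeroth-order term $\alpha\chi(\alpha y+2-a)$, which one tunes (in terms of $n$, $a$, $\delta$) to dominate the sign-indefinite curvature terms coming from $\chi$ throughout $A$.

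With $\phi$ in hand, the difference $u-c_0\phi$ is a weak $L_a$-supersolution in $A$ and is nonnegative on $\partial A$ (on $\partial B_\delta(P)$ because $\phi\equiv 0$ and $u\ge 0$; on $\partial B_{\delta/2}(P)$ by the choice of $c_0$), so the weak maximum principle of Remark~\ref{MPweak} yields $u\ge c_0\phi$ on $A$. Restricting to the $y$-axis then gives $u(0,y)\ge c_0 c\,y^{1-a}$ for small $y$, which is the first conclusion. For the second conclusion, if $y^a u_y\in C(\overline{\mathcal C_{R,1}})$ then the limit $V:=\lim_{y\to 0^+}y^a u_y(0,y)$ exists, and integrating in $y$ (using $u(0,0)=0$) gives
$$
u(0,y)=\int_0^y t^{-a}(t^a u_y)(0,t)\,dt=\frac{V}{1-a}\,y^{1-a}+o(y^{1-a});
$$
the first part forces $V>0$, hence $\partial_{\nu^a}u(0,0)=-V<0$.

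The main obstacle is the pointwise sign verification $L_a\phi\ge 0$ uniformly up to the tangent point $(0,0)$, because the weight $y^a$ is either degenerate ($a>0$) or singular ($a<0$) there while the curvature of $\partial B_\delta(P)$ produces a compensating bad term; the parameter $\alpha$ has to be chosen in such a way that the exponential's positive contribution beats the cutoff's curvature contribution throughout $A$, including near the tangency. All the other steps, namely the preliminary reductions, the final integration along the $y$-axis, and the compactness-based lower bound $u\ge m$ on the inner sphere, are standard.
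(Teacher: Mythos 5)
Your structural observations are correct and the algebraic identity
$$
L_a\bigl(y^{1-a}\Psi\bigr)=y\,\Delta\Psi+(2-a)\,\partial_y\Psi
$$
is right, but the tangent-ball geometry cannot deliver the required barrier, and this is a genuine gap rather than a technicality.

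The tangent ball $B_\delta(P)$ with $P=(0,\delta)$ touches $\{y=0\}$ \emph{at} the point $(0,0)$ where you want the conclusion, so $(0,0)\in\partial B_\delta(P)$. Your cutoff $\chi$ is therefore asked to do two incompatible things: vanish on $\partial B_\delta(P)$ (to control $u-c_0\phi$ on the outer boundary of the annulus) and equal $1$ near $(0,0)$ (to ensure property (iv), $\phi(0,y)\gtrsim y^{1-a}$). Any $\chi$ that vanishes on the outer sphere is forced to be small near the tangent point, so $\phi(0,y)=y^{1-a}\Psi(0,y)=o(y^{1-a})$ and property (iv) fails. Conversely, if $\chi$ does not vanish on $\partial B_\delta(P)$ near $(0,0)$, the comparison $u\ge c_0\phi$ on $\partial A$ is not available, because all you know about $u$ on that portion of the boundary is $u\ge 0$ and $u(0,0)=0$; with no a priori rate of vanishing of $u$, you cannot pick $c_0>0$ uniformly. (This is not the usual Hopf-lemma situation where the boundary data and the barrier both vanish linearly along the sphere; here the relevant rate along the inner normal is the anomalous one $y^{1-a}$, while the sphere is tangent to $\{y=0\}$, and near the tangent point the intersection of the ball with $\{y>0\}$ is only a parabolic horn $|x|\lesssim\sqrt{\delta y}$.) You flag the sign verification of $L_a\phi$ near $(0,0)$ as the main obstacle, but this geometric incompatibility is a separate and prior problem that the proposal never resolves.

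The paper circumvents all of this by working in a \emph{cylinder} rather than a tangent ball: the barrier is
$$
w_A(x,y)=y^{-a}(y+Ay^2)\,\varphi(x)=\bigl(y^{1-a}+A\,y^{2-a}\bigr)\varphi(x),
$$
where $\varphi>0$ is the first Dirichlet eigenfunction of $-\Delta_x$ in $\Gamma^0_{R/2}$. A direct computation gives $L_a w_A=\varphi(x)\{A(2-a)-\lambda_1(y+Ay^2)\}\ge0$ in a thin cylinder for $A$ large. Crucially, $w_A$ vanishes on $\{y=0\}$ and on the lateral boundary $\{|x|=R/2\}$, which is \emph{disjoint} from the point $(0,0)$, while $w_A(0,y)=(y^{1-a}+Ay^{2-a})\varphi(0)\sim\varphi(0)\,y^{1-a}$ exactly as needed. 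So the cylinder separates the $x$-cutoff (eigenfunction vanishing at $|x|=R/2$) from the $y$-rate ($y^{1-a}(1+Ay)$), making the two requirements compatible. Your final step deducing $\partial_{\nu^a}u(0,0)<0$ from the continuity of $y^a u_y$ is fine and essentially matches the paper, which uses the mean value theorem along the $y$-axis instead of the explicit integral, but this second step cannot be launched without the first.
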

\begin{proof}

Consider the function on $\mathcal C_{R,1}$ defined by 
\begin{equation*}
w_A(x,y)=y^{-a}(y+Ay^2)\varphi(x),
\end{equation*}
where $A$ is a constant to be chosen later and $\varphi=\varphi(x)$ is the first 
eigenfunction of $-\Delta_x$ in $\Gamma^0_{R/2}$ with Dirichlet boundary conditions, i.e., 
\begin{equation*}
\begin{cases}
-\Delta_x \varphi=\lambda_1 \varphi & \text{in } \Gamma_{R/2}^0\\
\varphi=0 &\text{on } \partial \Gamma_{R/2}^0.\\
\end{cases}
\end{equation*}
Notice that $\lambda_1>0$ and that we can choose $\varphi >0$ in $\Gamma_{R/2}^0$
with $\|\varphi\|_{L^\infty}=1$. The function $w_A$ satisfies
\begin{equation*}
\begin{cases}
L_a w_A=\varphi(x) \Big \{A(2-a) -\lambda_1 (y+Ay^2) \Big \} & \mbox{in } 
\mathcal C_{R/2,1}\\
w_A\geq 0& \mbox{in } \overline{\mathcal C_{R/2,1}}\\
w_A=0& \mbox{on } \partial \Gamma_{R/2}^0 \times [0,1). 
\end{cases}
\end{equation*}  
Therefore, choosing $A$ large enough, we have in $\mathcal C_{R/2,1}$
$$L_a w_A  \geq 0.$$

Hence, for $\varepsilon >0$,  
$$L_a (u-\varepsilon w_A)\leq 0\,\,\,\mbox{in} \,\,\mathcal C_{R/2,1}$$
and $u-\varepsilon w_A= u \geq 0$ on $\partial 
\Gamma_{R/2}^0 \times [0,1)$. Moreover, taking $\varepsilon >0$ small enough, we have on 
$\Gamma_{R/2}^0 \times \left \{y=1/2 \right \}$
$$u \geq \varepsilon  w_A,$$
since $u$ is continuous and positive on the closure of this set. Notice furthermore that 
$w_A=0$ on $\Gamma_R^0 \times \left \{y=0 \right \}$. 
Thus, we have 
\begin{equation*}
\begin{cases}
L_a(u-\varepsilon w_A) \leq 0&\mbox{in } \mathcal C_{R/2,1/2}\\
u-\varepsilon w_A \geq 0&\mbox{on } \partial \mathcal C_{R/2,1/2}.
\end{cases}
\end{equation*}

The weak maximum principle then implies that in $\overline {\mathcal C_{R/2,1/2}}$
$$u-\varepsilon w_A\geq 0 .$$
  Consequently, this leads to 
  $$\limsup_{y \rightarrow 0^+ }-y^a \frac{u(0,y)}{y}\leq \varepsilon 
\limsup_{y \rightarrow 0^+ }-y^a\frac{w_A(0,y)}{y}=-\varepsilon \varphi(0)<0,$$
as claimed in the proposition. 

Assume, in addition, $y^au_y \in C(\overline{\mathcal C}_{R,1})$. Let $y_0 \leq 1/2$. Since 
$(u-\varepsilon w_A)(0,\cdot) \geq 0 $ in $[0,y_0]$ and $(u-\varepsilon w_A)(0,0) = 0$, 
we have $(u_y-\varepsilon(w_A)_y)(0,y_1) \geq 0$ for some $y_1 \in (0,y_0)$. Repeating 
this argument for a sequence of $y_0's$ tending to $0$, we conclude that 
$-y^au_y \leq -\varepsilon y^a(w_A)_y$ at a sequence of points $(0,y_j)$ with $y_j \downarrow 0$. 
Since we assume $y^au_y$ continuous up to $\left \{y=0 \right \}$ and $-\varepsilon (y^a(w_A)_y)(0,y_j) 
\to -\varepsilon \varphi(0) $, we conclude that $\partial_{\nu^a}u(0,0)<0.$
\end{proof}

\begin{coro}
Let $a \in (-1,1)$ and $\varepsilon >0$. Let $d$ be a H\"older continuous function in 
$\Gamma^0_\varepsilon$ and $u \in L^\infty(B^+_\varepsilon) \cap H^1(B^+_\varepsilon,y^a)$ 
be a weak solution of 
\begin{equation*}
\begin{cases}
L_a u =0&\text{ in } B^+_\varepsilon \\
u \geq  0&\text{ in }  B^+_\varepsilon\\ 
\partial_{\nu^a} u +d(x) u=0&\text{ on } \Gamma^0_\varepsilon .  
\end{cases}
\end{equation*}
Then, $u >0$ in $B^+_\varepsilon \cup \Gamma^0_\varepsilon$ unless $u \equiv 0$ in $B^+_\varepsilon. $
\end{coro}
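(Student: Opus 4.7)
The approach is to combine the interior strong maximum principle for the uniformly elliptic operator $L_a$ in $\{y>0\}$ with the Hopf lemma just proved in Proposition~\ref{hopf}, after first lifting the regularity of $u$ and of $y^a u_y$ up to the boundary so that Proposition~\ref{hopf} is actually applicable. Writing the boundary condition as $\partial_{\nu^a}u=\varphi$ with $\varphi:=-d(x)\,u(\cdot,0)$, we have $\varphi\in L^\infty(\Gamma^0_\varepsilon)$, so the first part of Lemma~\ref{regularity1} yields $u\in C^{0,\beta}(\overline{B^+_{\varepsilon'}})$ for every $\varepsilon'<\varepsilon$. Since $d$ is H\"older and $u(\cdot,0)$ is now H\"older, the new datum $\varphi$ belongs to $C^\sigma$ for some $\sigma>0$, and the second part of Lemma~\ref{regularity1} then gives $y^a u_y\in C^{0,\beta}(\overline{B^+_{\varepsilon'}})$. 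In particular both $u$ and $y^a u_y$ are continuous up to $\Gamma^0_\varepsilon$.

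Assume now $u\not\equiv 0$ in $B^+_\varepsilon$. On the connected open set $B^+_\varepsilon=\{y>0\}\cap B_\varepsilon$, the equation $L_a u=y^a\Delta u+a\,y^{a-1}u_y=0$ is smoothly uniformly elliptic, so the classical strong maximum principle applied to the nonnegative solution $u$ gives $u>0$ throughout $B^+_\varepsilon$. To extend this to $\Gamma^0_\varepsilon$, suppose by contradiction that $u(x_0,0)=0$ for some $x_0\in\Gamma^0_\varepsilon$. Choose $\rho>0$ small enough (depending on the distance from $x_0$ to $\partial\Gamma^0_\varepsilon$) and define $\tilde u(x,y):=u(x_0+\rho x,\rho y)$; a direct computation shows $L_a\tilde u(x,y)=\rho^{2-a}(L_a u)(x_0+\rho x,\rho y)=0$ and that the conormal derivative scales by the positive factor $\rho^{1-a}$, so $\tilde u$ satisfies the hypotheses of Proposition~\ref{hopf} on a normalized cylinder $\mathcal C_{R,1}$ with $\tilde u(0,0)=0$ and $\tilde u>0$ inside. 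Proposition~\ref{hopf} then yields $\partial_{\nu^a}\tilde u(0,0)<0$, and unwinding the dilation gives $\partial_{\nu^a}u(x_0,0)<0$. This contradicts the boundary condition, which forces $\partial_{\nu^a}u(x_0,0)=-d(x_0)\,u(x_0,0)=0$.

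The main technical point is the regularity bootstrap: the second conclusion of Proposition~\ref{hopf}, namely $\partial_{\nu^a}u(0,0)<0$, explicitly requires $y^a u_y$ to be continuous up to $\{y=0\}$, and this continuity is only available after the datum $\varphi=-d\,u(\cdot,0)$ has been upgraded from $L^\infty$ to $C^\sigma$ via the H\"older regularity of $u(\cdot,0)$ provided by the first half of Lemma~\ref{regularity1}. The scaling step that reduces a generic boundary point and cylinder to the normalized $\mathcal C_{R,1}$ is routine once the behavior of $L_a$ and of the conormal derivative under $(x,y)\mapsto(\rho x,\rho y)$ is recorded; the interior positivity is the standard classical strong maximum principle, since $L_a$ is nondegenerate away from $\{y=0\}$.
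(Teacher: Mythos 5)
Your proposal is correct and follows essentially the same route as the paper's own proof: a regularity bootstrap via Lemma~\ref{regularity1} to make $u$ and $y^a u_y$ continuous up to $\{y=0\}$ so that the boundary condition holds pointwise, then the interior strong maximum principle on $B^+_\varepsilon$, and finally a rescaled application of the Hopf lemma (Proposition~\ref{hopf}) at a hypothetical boundary zero to reach a contradiction. Your write-up merely makes explicit the two-stage regularity argument (first $u\in C^{0,\beta}$ from $\varphi\in L^\infty$, then $\varphi\in C^\sigma$ and hence $y^a u_y\in C^{0,\beta}$) and the scaling computation that the paper leaves implicit.
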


\begin{proof}
We apply Lemma \ref{regularity1} to obtain that $u$ (for this, see the proof of
the lemma) and $y^au_y$ are $C^\alpha$ up to 
the boundary. Hence the equation 
\begin{equation}\label{tempHpf}
\partial_{\nu^a} u +d(x) u=0
\end{equation}
is satisfied pointwise on $\Gamma^0_\varepsilon$. If $u$ is not identically $0$ in 
$B^+_\varepsilon$ then $u>0$ in $B^+_\varepsilon$ by the strong maximum principle 
for the operator $L_a$. Now, if $u(x_0,0)=0$ at some point $(x_0,0)\in \Gamma^0_\varepsilon$, 
then a rescaled version of Proposition \ref{hopf} gives $\partial_{\nu^a} u (x_0,0) <0$. 
This contradicts \eqref{tempHpf}. 
\end{proof}

\subsection{A maximum principle}
Here we present a maximum principle
related to the operator $L_a$ and to the fractional Laplacian.
We will need it in our subsequent article to prove monotonicity properties
for solutions in $\RR $ with limits, as well as the uniqueness (up to translations) 
of layer solutions in $\RR$.
Recall that section~3 already contained some Liouville and maximum
principles for these operators.

\begin{lem}
\label{l3} 
Let $u\in (C\cap L^\infty)(\overline{\R^{n+1}_+})$ with $y^a u_y \in C(\overline{\RR^{n+1}_+})$
satisfy 
\begin{equation} \label{maxpr}
\begin{cases}
L_a u= 0&\text{ in } \R^{n+1}_+\\ 
\dfrac{\partial u}{\partial\nu^a}+d(x)u\ge 0&\text{ on }\partial\R^{n+1}_+,
\end{cases}
\end{equation}
where $d$ is a bounded function,
and also
\begin{equation}
\label{lim0}
u(x,0)\rightarrow 0 \qquad \mbox{as } |x|\rightarrow\infty .
\end{equation}
Assume that there exists a nonempty set $H\subset\R^n$ such that
$u(x,0)> 0$ for $x\in H$, and $d(x)\ge 0$ for $x\not\in H$.

Then, $u>0$ in $\overline{\R^{n+1}_+}$.
\end{lem}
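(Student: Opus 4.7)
The plan is to reduce everything to properties of the trace $v:=u(\cdot,0)$ by invoking the Poisson representation and then combining the strong maximum principle with the Hopf lemma. Since $u$ is bounded and continuous on $\overline{\R^{n+1}_+}$ with continuous bounded trace $v$, and since $L_a u=0$ in $\R^{n+1}_+$, the Dirichlet uniqueness in Corollary~\ref{uniqueness} together with Remark~\ref{equipb} gives the representation
$$
u(x,y)=\bigl(P_s(\cdot,y)*v\bigr)(x).
$$
Because $P_s\ge 0$ has total mass $1$, this representation propagates positivity of $v$ into the open half-space and forces $\inf_{\overline{\R^{n+1}_+}} u=\inf_{\R^n} v$.

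I would first show $v\ge 0$. Suppose, for contradiction, that $m:=\inf_{\R^n} v<0$. Since $v$ is continuous and, by \eqref{lim0}, tends to $0$ at infinity, the value $m$ is attained at some $x_0\in\R^n$. Because $v>0$ on $H$, we must have $x_0\notin H$, and the hypothesis then gives $d(x_0)\ge 0$. Set $w:=u-m$; then $L_a w=0$, $w\ge 0$ on $\overline{\R^{n+1}_+}$, and $w$ is not identically zero (it exceeds $|m|$ on $H$). The strong maximum principle (Remark~\ref{MPweak}, via even reflection as in Theorem~\ref{HarnackFKS}) yields $w>0$ in $\R^{n+1}_+$. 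Applying the Hopf principle (Proposition~\ref{hopf}, after a translation of $x_0$ to the origin and localisation in a small cylinder where $w>0$ and $y^a w_y=y^a u_y$ is continuous up to $\{y=0\}$) we get
$$
\partial_{\nu^a} u(x_0,0)=\partial_{\nu^a} w(x_0,0)<0.
$$
On the other hand, the boundary condition at $x_0$ gives $\partial_{\nu^a} u(x_0,0)\ge -d(x_0)\,m\ge 0$, a contradiction. Hence $v\ge 0$.

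Once $v\ge 0$ is known, the Poisson representation immediately implies $u>0$ in $\R^{n+1}_+$, since $v>0$ on the nonempty set $H$ and $P_s>0$. To upgrade this to the boundary, suppose $v(x_0)=0$ for some $x_0\in\R^n$. Again $x_0\notin H$ (as $v>0$ on $H$), so $d(x_0)\ge 0$. Since $u>0$ in $\R^{n+1}_+$ and $u(x_0,0)=0$, Proposition~\ref{hopf} gives $\partial_{\nu^a} u(x_0,0)<0$, which contradicts $\partial_{\nu^a} u(x_0,0)+d(x_0)\cdot 0\ge 0$. Thus $u>0$ on all of $\overline{\R^{n+1}_+}$.

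The only real subtlety is justifying that $\inf v$ is attained (guaranteed by continuity of $v$ and \eqref{lim0}) and that the Poisson identity applies to $u$ itself (guaranteed by the boundedness, continuity, and uniqueness in Corollary~\ref{uniqueness}); once these are in place, the argument is a clean application of strong maximum principle plus Hopf against the sign hypothesis on $d$.
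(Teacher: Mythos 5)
Your proof is correct and takes essentially the same approach as the paper: both use the Poisson representation (via Remark~\ref{equipb}) to identify $\inf_{\overline{\R^{n+1}_+}}u$ with $\inf_{\R^n}u(\cdot,0)$, rule out an interior minimum, and then invoke the Hopf principle (Proposition~\ref{hopf}) at a boundary minimum to contradict the sign hypothesis on $d$. The only difference is organizational: you split the argument into two cleanly separated cases ($\inf v<0$ and $\inf v=0$ with a boundary zero), whereas the paper runs a single contradiction argument at the nonpositive minimum; and you use the Poisson kernel's strict positivity (rather than the strong maximum principle) to get $u>0$ in the open half-space once $v\ge 0$, which is a slight simplification.
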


\begin{proof}
By Remark~\ref{equipb} applied to $v(x)=u(x,0)-\inf_{\R^n} u(\cdot,0)\geq 0$,
we see that $u-\inf_{\R^n} u(\cdot,0)\geq 0$ in $\R^{n+1}_+$.
Thus, $\inf_{\R^{n+1}_+} u =\inf_{\R^n} u(\cdot,0)$.

Arguing by contradiction, assume that there exists a point $(x_0,y_0)$ in 
$\overline{\RR^{n+1}_+}$ such that $u(x_0,y_0) \leq 0$. 
Then, in case $\inf_{\RR^{n+1}_+}u=0$,
the minimum of $u$ is a achieved at $(x_0,y_0)$. In case $\inf_{\RR^{n+1}_+}u=
\inf_{\R^n} u(\cdot,0)<0$,
using that $u(x,0) \rightarrow 0$ as $|x| \rightarrow +\infty$, there exists a point 
$(x_1,0)$ at which the minimum of $u$ is achieved. 
In both cases we conclude that the nonpositive minimum of $u$ is achieved at a point
$(x_2,y_2)$.

By the strong maximum principle, we cannot have $y_2>0$, since $u$ is not
identically constant (recall $u(\cdot,0)> 0$ in $H\not = \emptyset$).
Thus $y_2=0$. According to the Hopf lemma \ref{hopf} and since $y^a u_y \in 
C(\overline{\RR^{n+1}_+})$, we have 
\begin{equation*}
\frac{\partial u}{\partial \nu^a}(0,y_2) <0. 
\end{equation*}   
Since $u(x_2,0)\leq 0$ then $x_2 \notin H$, and thus we have $d(x_2)\geq 0$. 
Now, using the boundary condition in \eqref{maxpr} at $x=x_2$, 
we reach a contradiction.
\end{proof}

\begin{rema}
{\rm
Lemma \ref{l3} can be stated in an equivalent way using the equation
$$
(-\Delta)^s v +d(x) v \geq 0 \qquad \text{in }\R^n
$$
and assuming the same conditions on $v$ as those for $u(\cdot, 0)$ in the previous lemma.
In addition, an alternative proof of the lemma can be given using
the integral expression \eqref{fracpv} for $(-\Delta)^s v(x_2)$,
that will be negative at a point of minimum (since $v$ is not
identically constant in the proof).
}
\end{rema}

\section{Hamiltonian estimates}

This section is devoted to establish the main facts needed to prove 
Theorems~\ref{modthm} and \ref{modthmGS}. We start with an easy
lemma that will be needed later in several occasions.

\begin{lem}\label{haminf}
Let $u\in L^\infty(\R^{n+1}_+)$ be a bounded solution of \eqref{extAlpha}. Then, 
for all $x\in\R^n$, we have
$\int_0^{+\infty}t^{a}|\nabla u(x,t)|^2 dt<\infty.$
In addition,  the integral can be differentiated with respect to $x\in\R^n$ under
the integral sign. 
Furthermore,
\begin{equation}\label{lim0ymod}
\lim_{M\to +\infty} \int_M^{+\infty}t^{a}|\nabla u(x,t)|^2 dt = 0
\end{equation}
uniformly in $x\in \R^n$.

If in addition, $u$ is either a layer solution in $\RR$ (here $n=1$) or
$u$ is a radial solution in $\RR^n$ for which $\lim_{|x|\to\infty} u(|x|,0)$
exists, then
\begin{equation}\label{lim0xmod}
\lim_{|x|\to\infty} \int_0^{+\infty}t^{a}|\nabla u(x,t)|^2 dt = 0.
\end{equation}
\end{lem}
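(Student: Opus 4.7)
My plan is to extract everything from the pointwise bounds already collected in Proposition~\ref{integrability}: namely $\|\nabla_x u\|_{L^\infty(\R^{n+1}_+)}<\infty$, $\|y^a u_y\|_{L^\infty(\R^{n+1}_+)}<\infty$, and the interior decay $|\nabla u(x,y)|\le C/y$. Writing $t^a u_y^2= t^{-a}(t^a u_y)^2$, these give, with $C$ independent of $x$,
\begin{equation*}
t^a|\nabla u(x,t)|^2 \leq C\,t^a+C\,t^{-a} \quad\text{for } t\in(0,1], \qquad
t^a|\nabla u(x,t)|^2 \leq C\,t^{a-2} \quad\text{for } t\geq 1.
\end{equation*}
Since $a\in(-1,1)$, each of these is integrable on the corresponding interval, establishing $\int_0^{+\infty}t^a|\nabla u|^2\,dt<\infty$. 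The bound on $[1,\infty)$ is uniform in $x$, so $\int_M^{+\infty} t^a|\nabla u(x,t)|^2\, dt\le C M^{a-1}/(1-a)\to 0$ uniformly in $x$, which proves \eqref{lim0ymod}.

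To justify differentiating under the integral sign I would apply the same machinery to each tangential derivative $u_{x_j}$. Since $L_a u_{x_j}=0$ in $\R^{n+1}_+$ and $u_{x_j}$ is bounded (by Lemma~\ref{regNL}), rescaling $L_a$ in the half-ball $B_{y/2}(x,y)$ (where it is uniformly elliptic with smooth coefficients) yields $|\nabla u_{x_j}(x,y)|\le C/y$. Moreover, differentiating the Neumann condition gives $-(1+a)y^a\partial_y u_{x_j}=f'(u)u_{x_j}$ on $\{y=0\}$, a bounded H\"older function; then $\|\nabla_x u_{x_j}\|_\infty$ is controlled by the $C^{2,\beta}$-bound of Lemma~\ref{regNL} applied to $u(\cdot,0)$, while $\|y^a \partial_y u_{x_j}\|_\infty$ follows from the duality principle of Proposition~\ref{duality} applied to the $x_j$-derivative of $-y^a u_y$. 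Cauchy--Schwarz then dominates $|t^a \nabla u\cdot\nabla u_{x_j}|$ by an $x$-independent, $L^1(dt)$ function of the form above, so dominated convergence legalizes differentiation under the integral and passage of $\partial_{x_j}$ through the integral.

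For \eqref{lim0xmod} we are in one of the two regimes covered by Lemma~\ref{infinit}, so on every half-ball $B_R^+(x,0)$ the quantities $\|\nabla_x u\|_\infty$ and $\|y^a u_y\|_\infty$ tend to $0$ (as $x_1\to\pm\infty$ in the layer case, or as $|x|\to\infty$ in the radial case). Given $\epsilon>0$, I would first choose $M$ large so that the uniform tail in \eqref{lim0ymod} is less than $\epsilon/2$; then estimate
\begin{equation*}
\int_0^M t^a|\nabla u(x,t)|^2\, dt \le \|\nabla_x u\|_{L^\infty(B_M^+(x,0))}^2\frac{M^{a+1}}{a+1}+\|y^a u_y\|_{L^\infty(B_M^+(x,0))}^2\frac{M^{1-a}}{1-a},
\end{equation*}
which drops below $\epsilon/2$ once $|x|$ is large enough, yielding \eqref{lim0xmod}.

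The main technical obstacle is legitimizing differentiation under the integral sign close to $\{y=0\}$, because when $a>0$ the derivative $u_y$ may blow up like $y^{-a}$. The saving point is the $L^\infty$ bound on $y^a u_y$: after multiplication by the weight $t^a$, the singular term $u_y^2=y^{-2a}(y^a u_y)^2$ becomes the integrable $t^{-a}$, and the same mechanism (via Proposition~\ref{duality}) handles the mixed derivative $u_{x_j y}$.
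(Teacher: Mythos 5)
Your proof is correct and follows essentially the same route as the paper's, which simply invokes the gradient bounds of Proposition~\ref{integrability} (for finiteness, differentiability under the integral sign, and \eqref{lim0ymod}) and Lemma~\ref{infinit} (for \eqref{lim0xmod}). You have carefully filled in the details the paper leaves implicit---in particular the bounds on $\nabla u_{x_j}$ and $y^a\partial_y u_{x_j}$ obtained by applying the same regularity and duality machinery to the differentiated equation $L_a u_{x_j}=0$, which is precisely what is needed to legitimize differentiating under the integral.
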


\begin{proof}
The first two statements and \eqref{lim0ymod} follow directly from the gradient 
bounds in Proposition \ref{integrability}. The statement \eqref{lim0xmod} is a consequence 
of \eqref{lim0ymod} and of  Lemma \ref{infinit}.
\end{proof}

\subsection{Hamiltonian equality and estimate for layer solutions}

This subsection contains two lemmas. The first one establishes that 
the Hamiltonian is conserved for layer solutions in dimension one.

\begin{lem}\label{hamilton}
Let $n=1$ and assume that $u$ is a layer solution of \eqref{extAlpha}. Then, 
for all $x\in\R$ we have
$\int_0^{+\infty}t^{a}|\nabla u(x,t)|^2 dt<\infty$ and the Hamiltonian identity
\begin{equation}\label{mmequaltemp}
(1+a)\int_0^{+\infty} \frac{t^a}{2}  \left\{u_x^2(x,t)-u_y^2(x,t)\right\} dt=G(u(x,0))-G(1).
\end{equation}
As a consequence,
\begin{equation}\label{Gequal}
G(1)=G(-1).
\end{equation}
\end{lem}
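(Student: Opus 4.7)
The plan is to show that the function
\[
H(x) := (1+a)\int_0^{+\infty} \frac{t^a}{2}\bigl\{u_x^2(x,t) - u_y^2(x,t)\bigr\}\,dt - G(u(x,0))
\]
is constant in $x\in\R$, and then identify its value by letting $x\to+\infty$.

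First, the finiteness of $\int_0^{+\infty}t^a|\nabla u(x,t)|^2\,dt$ for each $x$, as well as the legitimacy of differentiating $H$ under the integral sign, are exactly the contents of Lemma~\ref{haminf}. So the first step is to simply compute
\[
H'(x) = (1+a)\int_0^{+\infty} t^a\bigl(u_x u_{xx} - u_y u_{xy}\bigr)\,dt + f(u(x,0))\, u_x(x,0),
\]
using $G'=-f$. The next step is to integrate by parts in $t$ in the term $\int_0^{+\infty} t^a u_y u_{xy}\,dt = \int_0^{+\infty}t^a u_y (u_x)_y\,dt$. The boundary term at $t=0$ produces, via the Neumann condition $-(1+a)t^a u_y\to f(u(x,0))$, a contribution that will cancel the $f(u(x,0))u_x(x,0)$ term; the boundary term at $t=+\infty$ must vanish, which I would justify by combining the $L^\infty$ bound on $\nabla_x u$ from Proposition~\ref{integrability} with the uniform decay \eqref{lim0ymod}. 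The remaining bulk term can be rewritten using the equation $L_a u = t^a u_{xx} + (t^a u_y)_y = 0$, replacing $(t^a u_y)_y$ by $-t^a u_{xx}$. After this substitution the remaining interior contribution exactly cancels the first piece $(1+a)\int_0^{+\infty}t^a u_x u_{xx}\,dt$, leaving $H'(x)\equiv 0$.

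Once $H$ is known to be constant, I would identify the constant by letting $x\to +\infty$. By Definition~\ref{defsolns}, $u(x,0)\to 1$, so by continuity $G(u(x,0))\to G(1)$. On the other hand, the statement \eqref{lim0xmod} of Lemma~\ref{haminf} (applied in the layer setting, which is one of the two cases covered there) gives $\int_0^{+\infty}t^a|\nabla u(x,t)|^2\,dt\to 0$ as $x\to+\infty$. Therefore $H(x)\to -G(1)$, so $H\equiv -G(1)$, which is precisely the identity \eqref{mmequaltemp}. Letting $x\to -\infty$ in the same identity and using $u(x,0)\to -1$ and again \eqref{lim0xmod} yields $G(-1)=G(1)$, establishing \eqref{Gequal}.

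The most delicate point is the justification of the integration by parts at $t=+\infty$: one needs to know not only that $t^a|\nabla u|^2$ is integrable on $(0,\infty)$ but also that the pointwise product $t^a u_x u_y$ tends to $0$ along a suitable sequence $t_k\to+\infty$ (or, more cleanly, in an averaged sense). This is where one leans on the combination of the pointwise estimate $|\nabla u(x,y)|\le C/y$ from \eqref{decay} with the uniform decay of the tail integrals in \eqref{lim0ymod}, which together force $t^a u_y u_x\to 0$ along a sequence of times. Every other step is a short, direct manipulation given the PDE, the boundary condition $(1+a)\partial_{\nu^a}u = f(u)$, and the layer limits at $x=\pm\infty$.
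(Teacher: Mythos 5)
Your proposal is correct and follows essentially the same route as the paper: define the Hamiltonian, differentiate under the integral sign (justified by Lemma~\ref{haminf}), integrate by parts in $t$, use $L_a u=0$ and the Neumann boundary condition to conclude $H'\equiv 0$, then identify the constant by sending $x\to+\infty$ and deduce $G(1)=G(-1)$ by sending $x\to-\infty$. The paper writes the same computation a bit more compactly (passing through $(1+a)v'(x)=\frac{d}{dx}G(u(x,0))$ rather than carrying the $G$ term inside $H$), and is terser about the vanishing of the boundary term at $t=+\infty$; your more careful justification via the pointwise bound $|\nabla u|\le C/y$ from \eqref{decay} combined with the tail decay \eqref{lim0ymod} is a reasonable way to fill in that step.
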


\begin{proof}
The  integrability of $t^a |\nabla u(x,t)|^2$ follows from Lemma 5.1.  
We now establish equality \eqref{mmequaltemp}. It will be crucial  that the weight in $L_a$ does
not depend on the tangential variable $x$.

Following \cite{CSM}, we consider the function
\begin{equation}
\label{definham}
v(x)=\int_0^{+\infty}\dfrac{t^a }{2}\left\{
u_x^2(x,t)-u_y^2(x,t)\right\} dt .  
\end{equation}
Lemma \ref{haminf} allows us to differentiate under the integral in
\eqref{definham} to get 
\begin{equation*}
\dfrac{d}{dx}v(x) =
\int_0^{+\infty}t^a ( u_{xx}u_x-u_{xy}u_y) (x,t)dt. 
\end{equation*}
Noticing that 
\begin{equation*}
L_a u= \partial_y (y^a u_y)+y^a u_{xx} =0
\end{equation*}
and after an integration by parts (which is justified by Lemma \ref{haminf}) we have
\begin{equation*}
\dfrac{d}{dx}v(x) =
\lim_{y \rightarrow  0^+} y^a u_y(x,y) u_x(x,y)=
\frac{1}{1+a}\frac{d}{dx}G(u(x,0)). 
\end{equation*}

The function $(1+a)v(x)-\{G(u(x,0))-G(1)\}$ is then constant in $x$.
Letting $x\to +\infty$ and using Lemma \ref{haminf}, we
have that this constant is actually zero. Letting now
$x\to -\infty$ and using Lemma \ref{haminf}, we deduce 
$G(1)=G(-1)$.
\end{proof}

We have obtained that a necessary condition for the 
existence of a layer solution in $\RR$ is that 
$G(1)=G(-1)$. 
The other necessary condition will follow from the following result
---our Modica-type estimate for layer solutions in dimension $1$ (Theorem \ref{modthm}). 

\begin{lem}\label{modica1}
Let $n=1$ and assume that $u$ is a layer solution of \eqref{extAlpha}. 
Then, for every $y \geq 0$ and all $x \in \RR$, we have
\begin{equation}\label{modicatype}
(1+a)\int_0^y \frac{t^a}{2} \left\{u_x^2(x,t)-u_y^2(x,t)\right\} \,dt < G(u(x,0))-G(1). 
\end{equation}
\end{lem}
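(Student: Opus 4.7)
The plan is to reduce the strict Modica-type inequality to a statement about the ``tail'' of the Hamiltonian integral. Using the Hamiltonian equality \eqref{mmequal} from Lemma~\ref{hamilton}, inequality \eqref{modicatype} is equivalent to
\[
Q(x,y) := \int_y^{+\infty} \frac{t^a}{2}\{u_x^2(x,t) - u_y^2(x,t)\}\,dt > 0 \qquad \text{for all } x \in \RR,\ y \geq 0,
\]
so the whole argument will focus on proving strict positivity of $Q$.

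First, I compute the partial derivatives of $Q$, differentiating under the integral sign (justified by Lemma~\ref{haminf}). The $y$-derivative is immediate, $\partial_y Q = -\tfrac{y^a}{2}(u_x^2-u_y^2)$. For $\partial_x Q$, I use $L_a u = 0$ to write $t^a u_{xx} = -\partial_t(t^a u_y)$, integrate by parts in $t$, and invoke the gradient decay $|\nabla u(x,t)| \leq C/t$ from Proposition~\ref{integrability} to kill the boundary term at $t=+\infty$. The cross term cancels, leaving the clean formula
\[
\partial_x Q(x,y) = y^a\, u_x(x,y)\, u_y(x,y).
\]

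The key idea is then to examine $Q$ along the level curves of $u$. Fix $(x_0,y_0)$ with $y_0 \geq 0$, set $c = u(x_0,y_0)$, and define $X(y)$ implicitly by $u(X(y),y) = c$ for $y \geq y_0$. This level curve is well-defined and smooth for every such $y$: by Remark~\ref{monotlayer} we have $u_x > 0$ on $\overline{\R^{2}_+}$, while Lemma~\ref{infinit} gives $u(x,y) \to \pm 1$ as $x \to \pm\infty$ for each fixed $y \geq 0$. Since $|u| < 1$ (by the Poisson representation of Remark~\ref{equipb}), we have $c \in (-1,1)$ and so $X(y)$ is uniquely determined. A direct computation with $X'(y) = -u_y/u_x$ and the formulas for $\partial_x Q$, $\partial_y Q$ yields the clean identity
\[
\frac{d}{dy}\bigl[Q(X(y),y)\bigr] \;=\; -\frac{y^a}{2}\,|\nabla u(X(y),y)|^2,
\]
which is strictly negative wherever $u_x > 0$, i.e., everywhere. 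So $Q$ is strictly decreasing along the level curve.

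Finally, I show $Q(X(y),y) \to 0$ as $y \to +\infty$: using $|\nabla u| \leq C/y$ again, the tail is bounded by $C^2 \int_y^\infty t^{a-2}\,dt = O(y^{a-1})$, which vanishes since $a<1$. Integrating the derivative identity from $y_0$ to $+\infty$ (together with continuity at $y_0 = 0$, which is guaranteed by the integrability of $t^a|\nabla u|^2$ near $t=0$ that follows from $a \in (-1,1)$ and the boundary behavior $t^a u_y \to -f(u)/(1+a)$) gives
\[
Q(x_0,y_0) \;=\; \int_{y_0}^{+\infty} \frac{t^a}{2}\,|\nabla u(X(t),t)|^2\,dt \;>\; 0,
\]
since $u_x>0$ makes the integrand strictly positive on a set of positive measure. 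The main subtlety, as I see it, is organizing the limiting arguments so that the level-curve identity is valid down to $y=0$ despite the degeneration of the weight; the boundary behavior of $y^au_y$ supplied by the Neumann condition and the uniform decay of the tail of $Q$ handle both ends.
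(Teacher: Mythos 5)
Your proposal is correct, and it takes a genuinely different route from the paper. Via the Hamiltonian equality the target \eqref{modicatype} is equivalent to positivity of $Q(x,y)=\int_y^{+\infty}\frac{t^a}{2}\{u_x^2-u_y^2\}\,dt$, which is in fact exactly the paper's auxiliary function $\overline{w}$; your partials $\partial_x Q=y^a u_x u_y$ and $\partial_y Q=-\frac{y^a}{2}(u_x^2-u_y^2)$ coincide with the paper's \eqref{tempM} and \eqref{eq}. From here the two arguments diverge. The paper computes $L_a\overline{w}$ and $L_{-a}\overline{w}$ and runs a delicate maximum-principle argument: it shows $\overline{w}$ is not constant, rules out an infimum on $\{y=0\}$ by a Hopf lemma (case $a\ge 0$) or a direct liminf computation (case $a<0$), rules out an interior infimum by rewriting $L_{-a}\overline{w}+b\,\overline{w}_x=0$ and using the strong maximum principle, and finally closes with the decay of $\overline{w}$ at infinity. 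You instead follow $Q$ along a level curve $u(X(y),y)=c$ of $u$, obtain the clean ODE $\frac{d}{dy}Q(X(y),y)=-\frac{y^a}{2}|\nabla u|^2<0$, note $Q(X(y),y)\to 0$ as $y\to\infty$ via the scale-invariant gradient bound $|\nabla u|\le C/y$, and integrate to get the explicit positive representation $Q(x_0,y_0)=\int_{y_0}^\infty\frac{t^a}{2}|\nabla u(X(t),t)|^2\,dt$. This avoids the case split in $a$, avoids Hopf's lemma, and avoids the transformed elliptic operator with drift; the well-definedness of the level curve is secured by $u_x>0$ (Remark~\ref{monotlayer}), the limits $u(x,y)\to\pm1$ as $x\to\pm\infty$ for each $y\ge 0$ (Lemma~\ref{infinit}), and $|u|<1$ from the Poisson representation. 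Your argument buys a shorter, more explicit proof that also produces an integral formula for the deficit in the Modica inequality; the paper's maximum-principle route, while longer here, is the one that would have a chance of extending to $n\ge 2$ where the level-set geometry is not one-dimensional. One small point worth spelling out: the continuity of $Q(X(y),y)$ at $y=y_0$ when $y_0=0$ requires the integrability at $t=0$ of $t^a|\nabla u|^2$ together with local uniformity in $x$, both of which are supplied by Lemma~\ref{haminf} and Proposition~\ref{integrability}; you gesture at this correctly but it deserves to be made explicit since the weight degenerates there.
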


\begin{proof}
We introduce the function
\begin{equation*}\label{auxiliaryw}
w(x,y)=\int_0^y\dfrac{t^a}{2}\left\{
u_x^2(x,t)-u_y^2(x,t)\right\} dt ,
\end{equation*}
which is bounded in all $\R^2_+$ by Lemma
\ref{haminf}. We introduce the function 
\begin{equation*}
\overline{w}(x,y)=\frac{1}{1+a}\Big \{G(u(x,0))-G(1) \Big \}-w(x,y). 
\end{equation*}
The function $\overline{w}$ is bounded in $\R^2_+$ and we need to show that
$\overline{w}>0$ in $\overline{\R^2_+}$. 

We first derive some equations for $\overline{w}$ 
which will be useful in the sequel. We have, for all $y>0$,
\begin{equation}\label{eq}
\overline{w}_y(x,y)=-\frac{y^a}{2}(u^2_x(x,y)-u^2_y(x,y)). 
\end{equation}
Furthermore, using $L_a u=0$ and integrating by parts as in the previous proof, one gets for all $y >0$
\begin{equation}\label{tempM}
\overline{w}_x(x,y)=y^a u_x(x,y)u_y(x,y). 
\end{equation}
Using the two previous equalities and  the equation $L_a u=0$, we have for all $y>0$
\begin{equation}\label{tempM1}
L_a \overline{w}=-a \, y^{2a-1}u^2_x
\end{equation}
and 
\begin{equation}\label{tempM2}
L_{-a} \overline{w}=-a \, y^{-1}u^2_y. 
\end{equation}

We claim that $\overline w$ does not achieve its infimum at a point in $\overline {\RR^2_+}$. 
We assume the contrary and  reach a contradiction. Let $(x_0,y_0)$ be a point where the infimum 
is achieved. There are now two cases depending if $y_0$ is on the boundary or not. We will also 
use that $\overline w$ is not identically constant. Indeed, if it were, since $w(\cdot,0)\equiv 0$  
then 
$$\mbox{constant}=\,\,\overline w(\cdot,0)=\frac{1}{1+a} \Big \{ G(u(\cdot,0))-G(1) \Big \}.$$
Thus $G$ is constant in $(-1,1)$, $f \equiv 0$ in $(-1,1)$ and $u$ is a bounded function satisfying 
\eqref{extAlpha} with $f \equiv 0$. Hence, after an even reflection across $\left \{ y=0 \right \}$, 
Theorem 3.4 ensures that  $u$ is a constant, a contradition with $u_x >0$. 

{\it Case 1: $y_0=0$}.  After a translation in $x$, we may assume $x_0=0$. Since $x_0=0$ is a global 
minimum of $(1+a)\overline w (\cdot,0)=G(u(\cdot,0))-G(1)$, we have 
$$
0=(d/dx)G(u(x,0))|_{x=0}=
-f(u(0,0))u_x(0,0),
$$
and therefore
\begin{equation}
\label{ux0}
0=-f(u(0,0))=(1+a)\lim_{ y \rightarrow 0^+} y^{a} u_y(0,y),
\end{equation}
since $u$ is a layer solution (i.e., $u_x(x,0)>0$). 
For every $(x,y) \in \overline{\RR^2_+}$, we have 
 by Remark \ref{monotlayer},
\begin{equation}\label{5**}
u_x(x,y) >0\,\,\,\mbox{in}\,\,\overline{\RR^2_+}.
\end{equation}

We now divide the conclusion into two subcases. Let consider first the case $a \geq 0$. By \eqref{eq}, 
we see that $y^a\overline w_y$ is H\"older continuous up to $y=0$. Since $L_a \overline w \leq 0$ 
by \eqref{tempM1} and $\overline w$ is not identically a constant, we have 
$\overline w > \overline w(0,0)$ in $\RR^2_+$. Thus  the Hopf principle (see Proposition \ref{hopf}) 
gives that 
$$0 > -\lim_{y \to 0^+} y^a \overline w_y (0,y).$$
Now, using \eqref{eq} and \eqref{ux0}, we have 
\begin{eqnarray*} 
0 &  > & -\lim_{y \rightarrow 0^+} y^{a} \overline w_y(0,y) \\ 
& = & \lim_{y \rightarrow 0^+}  \dfrac{y^{2a}}{2}\{u_x^2(0,y)-
u_y^2(0,y)\}\\
& = & \lim_{y \rightarrow 0^+} \dfrac{y^{2a}}{2}u^2_x(0,y)\geq 0,
\end{eqnarray*} 
a contradiction. 

We turn now to the case $a <0$. Since  $(0,0)$ is a global 
minimum for $\overline w (x,y)$, one gets
\begin{eqnarray*} 
0 &\geq & \liminf_{y \rightarrow 0^+}- y ^{-a} \overline w_y (0,y) \\
& = & \liminf_{y \rightarrow 0^+}\frac{1}{2}\Big ( u_x^2(0,y)-
u_y^2(0,y) \Big )=\frac12 u_x^2(0,0)>0,
\end{eqnarray*} 
a contradiction. We have used that, by Lemma \ref{regularity1} 
$|u_y (0,y)| \leq C y^{-a} \to 0$ as $y \to 0^+$.  

{\it Case 2: $y_0>0$}. By \eqref{5**}, we have $u_x>0$ in $\overline{\RR^2_+}$. Using 
\eqref{tempM} and \eqref{tempM2}, we obtain 
$$0=L_{-a} \overline w +a y^{-1} u_y^2=L_{-a}\overline w +(ay^{-1-a} \frac{u_y}{u_x}) \overline w_x $$ 
which is the same as  
$$0= \nabla \cdot (y^{-a} \nabla \overline w)+b(x,y) \overline w_x \,\,\,\,\mbox{in} \,\RR^{2}_+, $$
with $b(x,y):=ay^{-1-a} u_y u_x^{-1}.$
But this last operator is uniformly elliptic with continuous coefficients in compact sets of 
$\left \{ y>0 \right \}$. Thus it cannot achieve its minimum at $(x_0,y_0), $ since $y_0>0$ 
and we have proved that $\overline w$ is not identically constant. 

Therefore, we now know that $\overline w$ cannot achieve its infimum at a point in $\overline {\RR^2_+}.$ 
To finish the proof, assume first 
$$\inf_{ \overline {\RR^{2}_+}} \overline w <0.$$
By Lemma \ref{hamilton}, $\overline w (x,y) \to 0$ as $y \to +\infty$ locally uniformly in $x$. 
By Lemma \ref{haminf}, we have $\overline{w}(x,y) \rightarrow 0$ as $|x| \rightarrow
+\infty$ uniformly in $y$. Therefore,  the infimum of $\overline w$ being negative, it should be 
achieved at a point in $\overline{\RR^2_+}$, a contradiction with what we have proven. Therefore,
$$\inf_{\overline{\RR^2_+}} \overline w \geq 0,$$
i.e. $\overline w \geq 0. $

Thus, if $\overline w $ vanished at some point in $\overline{\RR^2_+}$, this point would achieve 
the infimum of $\overline w$, a contradiction. Hence $w >0$ in $\overline{\RR^2_+}$ as stated 
in the lemma.    
\end{proof}

\subsection{The Hamiltonian for radial solutions}

The next lemma deals with bounded radial solutions $u$ of \eqref{extAlpha}.
Here we do not assume  $u$ to have a limit  at infinity.

\begin{lem}\label{hamrad}
Let $u$ be a bounded solution of \eqref{extAlpha}.
Assume that $u=u(|x|,y)$ is radially symmetric in $x$. Then, 
\begin{equation}\label{hamradially}
(1+a)\int_0^{+\infty} \frac{t^a}{2}\Big \{ {u}_r^2(r,t)- {u}_y^2(r,t) 
\Big\}\, dt-G({u}(r,0))
\end{equation}
is a nonincreasing function of $r$.
\end{lem}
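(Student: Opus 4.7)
\smallskip

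\noindent\textbf{Proof plan.} The strategy is a direct computation: differentiate
$$
H(r):=(1+a)\int_0^{+\infty} \frac{t^a}{2}\Big\{u_r^2(r,t)-u_y^2(r,t)\Big\}dt-G(u(r,0))
$$
with respect to $r$ and identify the derivative as $-(1+a)\frac{n-1}{r}\int_0^{+\infty} t^a u_r^2(r,t)\,dt$, which is $\le 0$ since $a\in(-1,1)$ and $n\ge 1$.

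First, I would invoke Lemma~\ref{haminf} (applied to the bounded solution $u$, which is well defined for bounded radial $u$ without requiring a limit at infinity: the proof of Lemma~\ref{haminf} uses only the gradient bounds of Proposition~\ref{integrability}) to legitimize differentiating under the integral sign. Writing $\Delta_x u=u_{rr}+\frac{n-1}{r}u_r$ for the radial profile, the extension PDE becomes
$$
\partial_t(t^a u_t)+t^a\Big(u_{rr}+\frac{n-1}{r}u_r\Big)=0 \quad\text{for } t>0.
$$
Then, using the tangential/normal symmetry already exploited in Lemma~\ref{hamilton},
$$
H'(r)=(1+a)\int_0^{+\infty} t^a\big(u_r u_{rr}-u_y u_{ry}\big)(r,t)\,dt-G'(u(r,0))u_r(r,0).
$$

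The main step is an integration by parts in $t$ on $\int_0^{+\infty}t^a u_y u_{ry}\,dt$. Writing $u_y u_{ry}=u_r u_{yy}$-type manipulation carefully, one has
$$
\int_0^{+\infty}t^a u_y u_{ry}\,dt=\Big[t^a u_y u_r\Big]_0^{+\infty}-\int_0^{+\infty}\partial_t(t^a u_y)\,u_r\,dt,
$$
and substituting $\partial_t(t^a u_t)=-t^a(u_{rr}+\frac{n-1}{r}u_r)$ gives
$$
\int_0^{+\infty}t^a u_y u_{ry}\,dt=\Big[t^a u_y u_r\Big]_0^{+\infty}+\int_0^{+\infty}t^a\Big(u_{rr}+\frac{n-1}{r}u_r\Big)u_r\,dt.
$$
Plugging this back collapses the $u_r u_{rr}$ terms and yields
$$
H'(r)=-(1+a)\Big[t^a u_y u_r\Big]_0^{+\infty}-(1+a)\frac{n-1}{r}\int_0^{+\infty}t^a u_r^2(r,t)\,dt-G'(u(r,0))u_r(r,0).
$$

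The key (and only nontrivial) obstacle is justifying the boundary contributions of this integration by parts. At $t=0^+$, the Neumann condition $-(1+a)\lim_{y\downarrow 0}y^a u_y=f(u(\cdot,0))=-G'(u(\cdot,0))$ combined with the fact (Lemma~\ref{regularity1} and Proposition~\ref{integrability}) that $y^au_y$ and $u_r$ extend continuously up to $\{y=0\}$ gives $(1+a)\lim_{t\downarrow 0}t^a u_y(r,t)u_r(r,t)=-G'(u(r,0))u_r(r,0)$, which exactly cancels the $G'(u(r,0))u_r(r,0)$ term. At $t=+\infty$, the pointwise decay $|\nabla u(r,t)|\le C/t$ from \eqref{decay} combined with $a<1$ yields $t^a|u_y u_r|\le Ct^{a-2}\to 0$. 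Consequently
$$
H'(r)=-(1+a)\frac{n-1}{r}\int_0^{+\infty}t^a u_r^2(r,t)\,dt\le 0 \quad\text{for } r>0,
$$
which proves monotonicity on $(0,+\infty)$; continuity of $H$ at $r=0$ (from dominated convergence, using the integrable majorant) then extends nonincreasingness to $r\ge 0$. Note that for $n=1$ one recovers $H'\equiv 0$, consistent with Lemma~\ref{hamilton}.
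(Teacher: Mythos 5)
Your proof is correct and follows essentially the same strategy as the paper's: differentiate under the integral sign (justified by Lemma~\ref{haminf}), integrate by parts in $t$ using the radial form of the extension equation $\partial_t(t^a u_t)=-t^a(u_{rr}+\frac{n-1}{r}u_r)$, and identify $H'(r)=-(1+a)\frac{n-1}{r}\int_0^{\infty}t^a u_r^2\,dt\le 0$, which is exactly the paper's \eqref{derivradial}. Your explicit verification of the boundary contributions (the $t\to 0^+$ term cancelling with $-G'(u(r,0))u_r(r,0)$ via the Neumann condition and the regularity of $y^au_y$ and $u_r$, and the $t\to\infty$ term vanishing via the decay bound \eqref{decay}) fills in steps the paper compresses into ``use the equation and Lemma~\ref{haminf}''.
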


\begin{proof}
The function ${u}$ solves the 
 
\begin{equation*}
\begin{cases}
 {u}_{rr}+\frac{n-1}{r} {u}_{r}+{u}_{yy}+\frac{a}{y} {u}_{y}=0&\,\,\,\,
\mbox{in $(0,+\infty) \times (0,+\infty)$}\\
-(1+a)y^a {u}_y=f({u})&\,\,\,\,\mbox{on $(0,+\infty)\times\{y=0\}$}.
\end{cases}
\end{equation*}  

Let 
$$w(r):=\int_0^{+\infty} \frac{t^a}{2}\Big \{ {u}_r^2(r,t)- {u}_y^2(r,t) 
\Big\}\, dt.$$
By Lemma~\ref{haminf}, 
we can differentiate under the integral with respect to 
$r$ and obtain
$$\frac{d w(r)}{dr}=\int_0^{+\infty}  t^a \left\{ {u}_r {u}_{rr}-{u}_y{u}_{ry} 
\right\}(r,t)\, dt.$$
Performing one integration by parts and using the equation and Lemma \ref{haminf}, we end up with
$$\frac{d w(r)}{dr}=-\frac{n-1}{r} \int_0^{+\infty} t^a {u}^2_r(r,t)\, dt
+\frac{1}{1+a}G'({u}(r,0)) {u}_r(r,0).$$
As a consequence, the function 
$$(1+a)w(r)-G({u}(r,0))$$
is nonincreasing in $r$, as claimed. Furthermore, 
\begin{equation}\label{derivradial}
\frac{d}{dr} \left \{ (1+a)w(r)-G(u(r,0)) \right \}=-(1+a)\frac{n-1}{r}\int_0^\infty t^a u^2_r(r,t)\,dt. 
\end{equation}
\end{proof}

\section{The limit $s\to 1$ and the classical Laplacian}

In the following, we investigate the asymptotic $s \rightarrow 1$. 
For this, we will use crucially the previous Hamiltonian estimates.
We prove the following theorem.

\begin{theorem}\label{asymps}
Assume that $f\in C^{1,\gamma}(\RR)$ for some $\gamma \in (0,1)$
and that $\left \{ v_s \right \}$, with $s=s_k \in(0,1)$ and $s_k \uparrow 1$, 
is a sequence of layer solutions of 
$$(-\partial_{xx})^s v_s =f(v_s)\,\,\,\mbox{in } \RR,$$
such that $v_s(0)=0.$ Then, there exits a function $\overline v$ such that 
$$\lim_{s \uparrow 1} v_s=\overline v$$
in the uniform $C^2$ convergence on every compact set of $\RR$. 

Furthermore, the function $\overline v$ is the layer solution of
$$ -\overline v ''=f(\overline v)\,\,\,\,\,\mbox{in } \RR$$
with $\overline v(0)=0$, and satisfies the Hamiltonian equality 
\begin{equation}\label{hamilteq}
\frac12 (  \overline v')^2=G(\overline v)-G(1)\,\,\,\,\,\mbox{in } \RR. 
\end{equation}
\end{theorem}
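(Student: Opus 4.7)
The plan has three stages: uniform compactness, passage to the limit in the equation via a local integrated identity, and identification of $\bar v$ as the classical layer via the Hamiltonian of Section~5 combined with the necessary conditions on $G$.

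For compactness, since each $v_s$ is a layer solution we have $|v_s|\le 1$, so the second part of Lemma~\ref{regNL} gives $\|v_s\|_{C^{2,\beta}(\R)}\le C$ for some $\beta>0$ and some $C$, both independent of $s$ in a left neighborhood of~$1$. Arzela--Ascoli then extracts a subsequence (still denoted $\{v_s\}$) converging in $C^2_{\mathrm{loc}}(\R)$ to some $\bar v\in C^2(\R)$ with $\bar v(0)=0$ and $\bar v'\ge 0$. Let $u_s:=P_s\ast v_s$ be the Caffarelli--Silvestre extension of Proposition~\ref{poisson}. Then $u_s$ satisfies \eqref{extAlpha} with modified nonlinearity $c_sf$, where $c_s:=2(1-s)/d_s\to 1$ by Remark~\ref{constant}; moreover the third part of Lemma~\ref{regNL} gives uniform $C^\beta(\overline{\R^2_+})$ bounds on $u_s,\nabla_xu_s,D^2_xu_s$, and Proposition~\ref{integrability} supplies the decay $|\nabla u_s(x,y)|\le C/y$, all with constants independent of $s$ near~$1$.

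To pass to the limit in the equation, integrate $L_au_s=\partial_t(t^a(u_s)_y)+t^a(u_s)_{xx}=0$ in $t$ from $0$ to $1$ and use the Neumann data to get
\begin{equation*}
(1+a)(u_s)_y(x,1)+(1+a)\int_0^1 t^a(u_s)_{xx}(x,t)\,dt=-c_s\,f(v_s(x)).
\end{equation*}
As $a\downarrow-1$ the measure $(1+a)t^a\,dt$ on $[0,1]$ concentrates at $t=0$: we have $(1+a)\int_0^1 t^a\,dt=1$, while $(1+a)\int_0^1 t^{a+\beta}\,dt=(1+a)/(1+a+\beta)\to 0$ for any $\beta>0$. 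Using the uniform H\"older bound $|(u_s)_{xx}(x,t)-v_s''(x)|\le Ct^\beta$, the second summand therefore converges to $\bar v''(x)$; the first summand vanishes since $(u_s)_y(x,1)$ is bounded; and the right-hand side converges to $-f(\bar v(x))$. Hence $-\bar v''=f(\bar v)$ in~$\R$.

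It remains to show that $\bar v$ is a layer solution and that \eqref{hamilteq} holds. Applying Lemma~\ref{hamilton} to $u_s$ (with potential $c_sG$) gives $(1+a)\int_0^\infty\tfrac{t^a}{2}\{(u_s)_x^2-(u_s)_y^2\}(x,t)\,dt=c_s(G(v_s(x))-G(1))$. Dropping the non-negative $(u_s)_y$-integral and passing the $(u_s)_x$-integral to the limit by the same concentration mechanism---split at a fixed $M>0$: for $t<M$ the uniform H\"older bound on $(u_s)_x$ gives $(1+a)\int_0^M\tfrac{t^a}{2}(u_s)_x^2\,dt\to\tfrac12\bar v'(x)^2$, and for $t>M$ the decay $|\nabla u_s|\le C/t$ together with the factor $(1+a)$ makes the tail vanish---yields the key pointwise inequality $\tfrac12\bar v'(x)^2\ge G(\bar v(x))-G(1)$. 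Separately, Lemmas~\ref{hamilton} and~\ref{modica1} applied to each $v_s$, with Lemma~\ref{haminf} handling the limits $x\to\pm\infty$, already yield the necessary conditions $G(1)=G(-1)$ and $G>G(\pm 1)$ on $(-1,1)$. The classical ODE Hamiltonian $\tfrac12\bar v'^2-G(\bar v)\equiv C$ coming from $-\bar v''=f(\bar v)$, combined with $\bar v$ being bounded and monotone (whence $L^\pm:=\lim_{x\to\pm\infty}\bar v(x)$ exist and $\bar v'(\pm\infty)=0$ by the standard uniform-continuity argument), gives $C=-G(L^\pm)$; the pointwise inequality above then reads $C\ge -G(1)$, i.e.\ $G(L^\pm)\le G(1)$, which in view of $G>G(1)$ on $(-1,1)$ forces $L^\pm\in\{-1,1\}$. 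The case $L^-=L^+$ is excluded since it would yield $\bar v\equiv 0\notin\{\pm 1\}$. Hence $L^\pm=\pm 1$, $\bar v$ is a layer solution, $C=-G(1)$, and \eqref{hamilteq} follows. Uniqueness up to translation of the classical layer finally pins down the limit independently of the subsequence, so the full sequence converges. The main obstacle is the passage to the limit in the weighted integral $(1+a)\int_0^\infty t^a(u_s)_x^2\,dt$, where the Dirac-like concentration of $(1+a)t^a\,dt$ as $a\downarrow -1$ must be carefully matched against the uniform boundary regularity of $(u_s)_x$ to recover the classical kinetic energy $\tfrac12\bar v'(x)^2$.
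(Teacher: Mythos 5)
Your proposal is correct and follows the same overall strategy as the paper: uniform $C^{2,\beta}$ compactness from Lemma~\ref{regNL}, concentration of the measure $(1+a)t^a\,dt$ at $t=0$, passage to the limit in the Hamiltonian identity \eqref{mmequaltemp} (dropping the nonnegative $u_y$-integral) to obtain the pointwise inequality $\tfrac12\overline v'^2\ge G(\overline v)-G(1)$, and identification of $L^\pm=\pm 1$ via the classical first integral $\tfrac12\overline v'^2-G(\overline v)=-G(L^\pm)$ combined with the double-well property $G>G(1)$ on $(-1,1)$ (from Lemma~\ref{modica1}).

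The one place where you genuinely diverge is the derivation of $-\overline v''=f(\overline v)$. The paper passes to the limit in the \emph{weak formulation} of \eqref{extAlpha}, using test functions $\xi(x,y)=\eta_1(x)\eta_2(y)$ and the decay $|\nabla u_a|\le C/y$ to kill the far-field contribution; the limit is first obtained as a weak ODE and then upgraded. You instead integrate $L_au_a=\partial_t(t^a(u_a)_y)+t^a(u_a)_{xx}=0$ in $t$ over $(0,1)$ to get the exact pointwise identity
\begin{equation*}
(1+a)(u_a)_y(x,1)+(1+a)\int_0^1 t^a(u_a)_{xx}(x,t)\,dt=-c_a f(v_s(x)),
\end{equation*}
and then use the uniform $C^\beta$ bound on $D^2_xu_a$ to read off $-\overline v''(x)=f(\overline v(x))$ directly at each $x$. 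This is slightly more direct (it produces a classical solution in one step and avoids the choice of test functions) and is nicely parallel to the treatment you give of the Hamiltonian integral; the paper's weak-formulation route is marginally more robust but requires an extra elliptic-regularity step to conclude $\overline v\in C^2$. Two small points worth tidying: to rule out $L^-=L^+$ it is cleaner to argue as the paper does, that $\overline v(0)=0$ and $\overline v'\ge 0$ force $L^+\ge 0$ and $L^-\le 0$, hence $L^\pm=\pm1$ directly; and the uniqueness used at the end is uniqueness of the increasing solution with the fixed normalization $\overline v(0)=0$ (coming from the first-order ODE $\overline v'=\sqrt{2(G(\overline v)-G(1))}$), not merely uniqueness up to translation.
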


The previous theorem is stronger than just saying that the limit when $s$ goes to $1$
 is a solution of an ODE, since it states that the limit is actually a layer itself. 
We can see Theorem \ref{asymps} as a stability result in the class of layer solutions of 
nonlocal (and local) equations. 

\begin{proof}[Proof of Theorem \ref{asymps}]
Let $v_s$ be a layer solution of 
$$(-\partial_{xx})^s v_s =f(v_s)\,\,\,\mbox{in } \RR $$
with $v_s(0)=0. $
Then, the extension $u_a=P_s \,* v_s$ of $v_s$ satisfies 
\begin{equation*}
\begin{cases}
\textrm{div\,}  (y^a \nabla u_a) =0&\,\,\,\,\mbox{in $\RR^2_+$}\\
(1+a) \frac{\partial u_a}{\partial \nu^a}=c_af(u_a)&\,\,\,\,\mbox{on $\partial \RR^2_+$}\\
u_a=v_s&\,\,\,\,\mbox{on $\partial \RR^2_+$},
\end{cases}
\end{equation*}  
where $c_a= d_s^{-1}(1+a)=d_s^{-1}2(1-s)$ and $d_s $ is the constant in Theorem  \ref{realization} 
and Remark  \ref{constant}. By \eqref{limctts}, we know that $c_a $ tends to $1$ as $a$ goes to $-1$. 
The weak formulation of this problem is 

\begin{equation}
(1+a)\int_{\RR^{2}_+} y^{a} \nabla u_a \cdot  \nabla \xi -\int_{\RR} c_a f(u_a)
\xi =0
\end{equation}
for all $\xi \in C^1(\overline{\RR^{2}_+})$ compactly supported. 

First notice that, 
by the regularity result in Lemma \ref{regNL}, which is uniform as $s \uparrow 1$, the functions 
$u_a$ and $\partial_x u_a$ converge over compact sets (up to a subsequence) to a function 
$u_{-1}=u_{-1}(x,y)$ and its $x-$derivative as $ a \to -1$ (which corresponds to $s \to 1$). 
We now choose the following 
test function: $\xi(x,y)=\eta_1(x) \eta_2(y)$, where $\eta_2(y)=1$ for $ 0 \leq y <1$ and 
$\eta_2(y)=0$ for $y >2$, whereas $\eta_1$ is any test function. 
We deduce   

\begin{equation*}
(1+a)\int_{\RR^{2}_+} y^{a} \Big \{\eta'_1(x)\eta_2(y)  \partial_x u_a  +  
\eta_1(x)  \eta'_2(y)   \partial_y u_a\Big \}dxdy 
\end{equation*}
\begin{equation*}
-\int_{\RR} c_af(u_a) \eta_1(x) dx=0. 
\end{equation*}

We now pass to the limit in each term. Thanks to the uniform bounds of Lemma \ref{regNL}, 
we have that 

$$\lim_{a \downarrow -1 }c_a\int_{\RR} f(u_a) \eta_1(x) \,dx $$
$$= \int_{\RR} f(u_{-1}(x,0)) \eta_1(x)\,dx. $$

Note  that the measure $(1+a) y^a \, dy $ is a probability 
measure on $(0,1)$ converging as $a \downarrow -1$ (in the weak$-*$ sense of measures)  to the Dirac 
measure $\delta_0$. More precisely,  given functions $w_a=w_a(y)$ continuous in $[0,\infty)$, 
with $|yw_a(y) | \leq C $ in $[0,+\infty)$ (with $C$ uniform in $a$) and 
with $w_a$ converging to a 
function $w_{-1}$ uniformly in compact sets of $[0,+\infty)$, then 
$$\lim_{a \downarrow -1} (1+a)\int_0^\infty y^a w_a(y)dy=w_{-1}(0).$$
Indeed, given $\varepsilon >0$ let $\delta >0$ such that $|w_a(x)-w_a(0)| \leq \varepsilon$ 
for all $x \in (0,\delta)$. Then, we write 
$$ (1+a)\int_0^\infty y^a w_a(y)dy= (1+a)\int_0^\delta y^a w_a(y)dy+ (1+a)\int_\delta^\infty 
y^a w_a(y)dy. $$
We have that 

$$(1+a) \int_0^\delta y^a(w_{-1}(0)+w_a(0)-w_{-1}(0))\,dy$$
tends to  $\lim_{a \downarrow -1}w_{-1}(0)\delta ^{1+a}=w_{-1}(0)$
and
$$(1+a) \int_0^\delta y^a|w_a(y)-w_{a}(0)|\,dy \leq \delta ^{1+a} \varepsilon.$$
Finally, 
\begin{equation}\label{intzero}
(1+a)\int_\delta^\infty y^a |w_a(y)|dy \leq (1+a)C \int_\delta ^\infty y^{a-1}�\,dy\leq 
C\frac{(1+a)}{-a}\delta^{a} \to 0
\end{equation}
as $a \downarrow -1$. This proves the claim above. 

We now divide the integral 
\begin{eqnarray*}
& & \ \hspace{-1.3cm} 
(1+a)\int_{\RR^{2}_+} y^{a} \Big \{\eta'_1(x) \eta_2(y)  \partial_x u_a +  
\eta_1(x)  \eta'_2(y) \partial_y u_a\Big \} dx dy\\
&=& (1+a)\int_{\RR \times (0,+\infty)} y^{a} \eta'_1(x)\eta_2(y)\partial_x u_a dxdy\\
& & 
+(1+a)\int_{\RR \times (1,2)} y^{a}   
\eta_1(x) \eta'_2(y)  \partial_y u_a dx dy .
\end{eqnarray*}
Thanks once again to Lemma \ref{regNL}, the observation above (with $w_a=\eta_2(\cdot)
\partial_x u_a(x,\cdot)$) and the gradient 
bounds of Lemma \ref{integrability}, we deduce 
\begin{eqnarray*}
& & \ \hspace{-2cm} \lim_{a \to -1}(1+a)\int_{\RR \times (0,+\infty)} y^{a} \eta'_1(x) 
\eta_2(y)\partial_x u_a \,dx dy\\
&=& \int_\RR u'_{-1} (x,0)\eta_1'(x)\,dx.
\end{eqnarray*}
By the same lemma, $|\partial_y u_a| \leq C y^{-1}$ uniformly in $a \in (-1,0)$, and thus 
the same computation as in \eqref{intzero} shows that 
$$\lim_{a \to -1}(1+a)\int_{\RR \times (1,2)} y^{a} \eta_1(x) \eta'_2(y) \partial_y 
u_a\,dxdy=0.$$

Therefore, the function $\overline v:=\overline v(x)=u_{-1}(x,0)$ satisfies  

\begin{equation}
\int_{\RR} \overline{v}'  \eta_1' -f(\overline v ) \eta_1 =0.
\end{equation}
Hence $\overline v $ is a weak solution of 
$$-\overline{v}'' =f(\overline v)\,\,\,\mbox{in } \RR ,$$
such that $\overline v(0)=0$ and $\overline{v}'\geq 0$ in $\RR$. 
As a consequence, the function $\overline v$ admits limits 
at $\pm \infty$, 
$$\lim_{x \rightarrow \pm \infty} \overline v(x)= L^\pm\,\,\,\,\in [-1,1] . $$

We now prove the convergence of the Hamiltonian, which will provide in addition  
that the function $\overline v $ is actually a layer, i.e. $L^\pm=\pm 1$. We apply the 
Hamiltonian estimate \eqref{modicatype} with $y=0$ to the layer $u_a$ for some $a \in (-1,1)$ 
with $G$ replaced by $c_aG.$ We deduce 
\begin{equation}\label{Gdouble}
0 < G-G(1)\,\,\,\,\mbox{in}\,\,\,(-1,1).
\end{equation}

Next, we integrate the equation satisfied by $\overline v$, we use the above observation 
now with $w_a(\cdot)=(\partial_x u_a)^2(x,\cdot)$ and we use the Hamiltonian identity 
\eqref{mmequaltemp} for the layer $u_a$ to obtain for all $x \in \RR$
\begin{equation}\label{modx}
G(\overline{v}(x))-G(L^+)=\frac12 (\overline{v}' )^2(x)=\lim_{a \downarrow -1}\frac{(1+a)}{2}
\int_0^{+\infty} y^a(\partial_x u_a)^2(x,y)dy
\end{equation}
$$=\lim_{a \downarrow -1}\Big \{ \frac{(1+a)}{2}\int_0^{+\infty} y^a(\partial_y u_a)^2(x,y)dy
+c_a G(u_{a}(x,0))-c_a G(1) \Big \},$$
and thus
\begin{equation}\label{conclus}
G(\overline v(x))-G(L^+) \geq \lim_{a \downarrow -1} c_a(G(u_{a}(x,0))-G(1))= G(\overline v(x))-G(1).
\end{equation}
Hence we have that 
$$G(L^+) \leq G(1), $$
that together with \eqref{Gdouble} and $L^+ \geq 0$ (since $\overline v(0)=0$) gives $L^+=1. $

In addition, we deduce that the inequality \eqref{conclus} must be an equality. Thus, the 
term that we have dropped to obtain the inequality must be zero, i.e.
\begin{equation}\label{mody}
\lim_{a \to -1}\frac{(1+a)}{2}\int_0^{+\infty} y^a(\partial_y u_a)^2(x,y)\,dy=0.
\end{equation}
In the same way, we prove that $L^-=-1. $
 Hence $\overline v $ is the layer solution connecting $-1$ to $1$, with $\overline v(0)=0$. 
The uniqueness of such $\overline v$ follows from the Hamiltonian equality \eqref{hamilteq}. 
\end{proof}

\section{Proof of Theorems \ref{necLayers}, \ref{modthm}, \ref{necBounds}, 
and \ref{modthmGS}}

We prove in this section the main theorems of our paper. They will follow
easily from our results in previous sections.

\begin{proof}[Proof of Theorem \ref{modthm}]
Part (i) follows from Lemmas \ref{hamilton} and \ref{modica1}. Part (ii) follows 
from Theorem \ref{asymps} and from \eqref{modx} and \eqref{mody} in its proof. 
\end{proof}

\begin{proof}[Proof of Theorem \ref{necLayers}]
If $v$ is a layer solution of \eqref{limits2}, its extension $u$ is a layer solution of 
\eqref{extAlpha} in $\RR^2_+$, up to a multiplicative constant in front of $f$ that tends 
to $1$ as $s \uparrow 1$. In part (i) of the theorem, \eqref{nec1} follows from \eqref{fpm}
in Lemma \ref{infinit}. The equality in \eqref{nec2}
is \eqref{Gequal} of 
Lemma \ref{hamilton}, while the inequality in  \eqref{nec2} follows from
taking $y=0$ in the statement of Lemma \ref{modica1}. Part (ii) of the theorem 
follows from  Theorem \ref{asymps}. 
\end{proof}

\begin{proof}[Proof of Theorem \ref{modthmGS}] 
It follows from Lemmas~\ref{haminf} and \ref{hamrad}.
\end{proof}

\begin{proof}[Proof of Theorem \ref{necBounds}] 
If $v$ is a radial solution in $\R^n$, its extension $u=u(x,y)$ belongs to
$L^{\infty}({\R^{n+1}_+})$ and it is a solution of \eqref{extAlpha}, up to a multiplicative 
positive constant in front of $f$. Clearly, $u$ is a radial solution in $x \in \RR^n$. 

The relation $f(0)=0$ is 
\eqref{fpmrad} in Lemma \ref{infinit}. 

The conclusion 
$G(0)>G(u(0,0))$ of the theorem follows from Lemma \ref{hamrad} and \eqref{derivradial}.
Indeed, in \eqref{hamradially} we let $r=0$ and later $r\to\infty$, and we use \eqref{lim0xmod} 
in Lemma~\ref{haminf} to obtain 
$$-G(v(0))\geq -(1+a) \int_0^\infty \frac{t^a}{2}u_y^2(0,t)\,dt -G(v(0)) \geq -G(0).$$ Thus 
$G(0)  \geq G(v(0)).$ But if $G(0)=G(v(0))$ then the function in \eqref{hamradially} would be 
constant in $r$. Hence, by \eqref{derivradial} and since $n >1$, $u_r(r,t) \equiv 0$ for all 
$r$ and $t$, and then $u$ is constant, contrary to our assumption.

It only remains to prove  the other statement of the theorem, 
$$f'(0)=-G''(0)\leq 0$$
under the assumption $u_r <0$. 
Without loss of generality, and to simplify notation, we may replace $f$ by a positive multiple of $f$  
in \eqref{extAlpha} and hence assume that $u$ solves
\begin{equation*}
\label{linr0}
\begin{cases}
\textrm{div\,} (y^{a}\,\nabla u)=0&\text{ in } \mathbb{R}^{n+1}_+\\ 
\displaystyle{\frac{\partial u}{\partial{\nu}^{a}}}
=f(u) &\text{ on } \partial\mathbb{R}^{n+1}_+=\RR^n .
\end{cases}
\end{equation*}
We differentiate both equations with respect to $r=|x|$, using that the first
one reads ${u}_{rr}+\frac{n-1}{r} {u}_{r}+ {u}_{yy}+\frac{a}{y} {u}_{y}=0$ in
$(0,+\infty) \times (0,+\infty)$. Let
$$
\psi:=-u_r >0 \qquad \text{in } (\R^n\setminus\{0\})\times (0,+\infty).
$$
We deduce that
\begin{equation*}
\label{linr1}
\textrm{div\,} (y^{a}\,\nabla \psi)= \frac{n-1}{|x|^2} y^a \psi
\qquad \text{in } (\R^n\setminus\{0\})\times (0,+\infty)
\end{equation*}
and
\begin{equation*}
\label{linr2}
f'(u) \psi= \frac{\partial \psi}{\partial{\nu}^{a}}
\qquad\text{for }y=0.
\end{equation*}

For $x_0 \in \R^n$,
let $\psi^{x_0}(x,y):=\psi(x-x_0,y)$, a positive function in 
$(\R^n\setminus\{x_0\})\times (0,+\infty)$. 
Let $u^{x_0}(x,y):=u(x-x_0,y)$. We have
\begin{equation}
\label{linr1t}
\textrm{div\,} (y^{a}\,\nabla \psi^{x_0})= \frac{n-1}{|x-x_0|^2} y^a
\psi^{x_0} \qquad \text{in } (\R^n\setminus\{x_0\})\times (0,+\infty)
\end{equation}
and
\begin{equation}
\label{linr2t}
f'(u^{x_0}) \psi^{x_0}= \frac{\partial \psi^{x_0}}{\partial{\nu}^{a}}
\qquad\text{for }y=0.
\end{equation}

For $R>0$, consider the cylinder
$\mathcal C_{R}= \Gamma_R^0 \times (0,R) \subset \RR^{n+1}_+$, where $\Gamma_R^0$ 
is the ball of center $0$ and radius $R$ in $\RR^n$.
Let $\xi$ be any $C^1$ function in $\overline{\mathcal C_{R}}$ vanishing on
$\{|x|=R\} \times [0,R)$ and on $\Gamma_R^0 \times \{y=R\}$.

For $|x_0|>R$, we multiply \eqref{linr2t} by $\xi^2/\psi^{x_0}$ ---note that
$\psi^{x_0} >0$ in  $\Gamma_R^0 \times [0,R)$---, we integrate in  $\Gamma_R^0$
and use \eqref{linr1t} to obtain
\begin{eqnarray*}
& &  \hspace{-1.2cm}
\int_{\Gamma^0_R} f'(u^{x_0}) \xi^2 
\\
&=&
\int_{\mathcal C_{R}} \textrm{div\,} (y^{a}\,\nabla \psi^{x_0}) 
\frac{\xi^2}{\psi^{x_0}} + y^a\nabla \psi^{x_0}\cdot\nabla\frac{\xi^2}{\psi^{x_0}}
\\
&=&
\int_{\mathcal C_{R}} \frac{n-1}{|x-x_0|^2} y^a \xi^2
+ y^a\nabla \psi^{x_0}\cdot\nabla\frac{\xi^2}{\psi^{x_0}}
\\
&=&
\int_{\mathcal C_{R}} \frac{n-1}{|x-x_0|^2} y^a \xi^2
+ y^a\left\{2 \xi\nabla \xi\cdot\frac{\nabla\psi^{x_0}}{\psi^{x_0}}
-\xi^2 \frac{|\nabla\psi^{x_0}|^2}{(\psi^{x_0})^2}\right\}
\\
&\leq&
\int_{\mathcal C_{R}} \frac{n-1}{|x-x_0|^2} y^a \xi^2
+ y^a|\nabla\xi|^2,
\end{eqnarray*}
where we have used Cauchy-Schwarz inequality.

Letting $|x_0|\to \infty$, we deduce
\begin{equation}
\label{f0eig}
f'(0)\leq \frac{\int_{\mathcal C_{R}}y^a|\nabla\xi|^2}{\int_{\Gamma^0_R} \xi^2}
\end{equation}
for any $C^1$ function $\xi$ in $\overline{\mathcal C_{R}}$ vanishing on
$\{|x|=R\} \times [0,R)$ and on $\Gamma_R^0 \times \{y=R\}$.

Let $\varphi_R=\varphi_R(x)>0$ be the first eigenfunction 
of $-\Delta_x$ in $\Gamma_R^0$ with Dirichlet boundary conditions, i.e., 
\begin{equation*}
\begin{cases}
-\Delta_x \varphi_R=\lambda_R \varphi_R & \mbox{in } \Gamma_R^0\\ 
\varphi_R=0 & \mbox{on } \partial \Gamma_R^0,
\end{cases}
\end{equation*}
where $\lambda_R=c(n)/R^2>0$ is the first Dirichlet eigenvalue
of $-\Delta_x$ in the ball $\Gamma_R^0$.
Let $h_R=h_R(y)\in [0,1]$ be a smooth function with compact support in $[0,R)$ such that
$h_R\equiv 1$ in $[0,R/2]$ and $|h_R'|\leq C/R$ for some constant $C$.
Take 
$$
\xi=\xi_{R}=\varphi_{R}(x) h_R(y)
$$ 
in \eqref{f0eig}. We have
\begin{eqnarray*}
\int_{\mathcal C_{R}}y^a|\nabla\xi_R|^2 & = &
\int_{\mathcal C_{R}}y^a\left\{|\nabla_x\varphi_R|^2 h_R^2+
\varphi_R^2 (h_R')^2\right\} \\
&=&
\left\{ \lambda_R \int_0^R y^ah_R^2 dy + \int_0^R y^a(h_R')^2 dy
\right\} \int_{\Gamma^0_R} \varphi_R^2\\
&=&
\left\{ CR^{-2} \int_0^R y^ah_R^2 dy + \int_0^R y^a(h_R')^2 dy
\right\}
\int_{\Gamma^0_R} \xi_R^2
\\
&\leq &
CR^{-2} \int_0^R y^a dy 
\int_{\Gamma^0_R} \xi_R^2 = CR^{-2}\frac{R^{2-2s}}{2-2s} 
\int_{\Gamma^0_R} \xi_R^2
\\
&= &\frac{C}{2-2s} R^{-2s} 
\int_{\Gamma^0_R} \xi_R^2.
\end{eqnarray*}
Using this in \eqref{f0eig} and letting $R\to\infty$, we conclude
that $f'(0)\leq 0$, as claimed.
\end{proof}

\bibliographystyle{plain} 
\bibliography{bibliofile_A_new}

\end{document}